\def\RR{\hbox{I\kern-.2em\hbox{R}}}
\numberwithin{equation}{section}
\numberwithin{equation}{section}
\newtheorem{Def}{Definition}
\newtheorem{Lemma}{Lemma}
\newtheorem{Th}{Theorem}
\numberwithin{equation}{section}
\newcommand{\bx}{\boldsymbol{x}}
\begin{document}
	
	\date{}
	
\title{The role of harvesting and growth rate for spatially heterogeneous populations}
	
	\author[1]{\small Md. Mashih Ibn Yasin Adan \thanks{Email: mdadan1081@gmail.com}} 
	\author[2]{\small Md. Kamrujjaman \thanks{Corresponding author: M. Kamrujjaman, email: kamrujjaman@du.ac.bd, \\ ORCID ID: \url{https://orcid.org/0000-0002-4892-745X}}}
	\author[3,4]{\small Md. Mamun Molla\thanks{Email: mamun.molla@northsouth.edu}} 
	\author[5]{\small  Muhammad Mohebujjaman\thanks{Email: m.mohebujjaman@tamiu.edu}} 	
	\author[6]{\small  Clarisa Buenrostro\thanks{Email:  clarisabuenrostro@dusty.tamiu.edu}} 	
	
	\affil[1,2]{\footnotesize Department of Mathematics, University of Dhaka, Dhaka 1000, Bangladesh}
	\affil[3]{\footnotesize Department of Mathematics \& Physics, 		North South University, Dhaka-1229, Bangladesh}
    \affil[4]{\footnotesize Center for Applied Scientific Computing (CASC), North South University, Dhaka-1229, Bangladesh}	
    \affil[5,6]{\footnotesize 	Department of Mathematics and Physics, Texas A\&M International University, Laredo, TX 78041, USA}	

	\maketitle
	
	\vspace{-0.5cm}
	\noindent\rule{6.35in}{0.02in}\\
{\bf Abstract.}	
This paper investigates the competition of two species in a heterogeneous environment subject to the effect of harvesting. The most realistic harvesting case is connected with the intrinsic growth rate, and the harvesting functions are developed based on this clause instead of random choice.   We prove the existence and uniqueness of the solution to the model we consider. Theoretically, we state that when species coexist, one may drive the other to die out, and both species extinct, considering all possible rational values of parameters. These results highlight a comparative study between two harvesting coefficients. Finally, we solve the model using a backward-Euler, decoupled, and linearized time-stepping fully discrete algorithm and observe a match between the theoretical and numerical findings. 
	\\[-3mm]\\
	\noindent{\it \footnotesize Keywords}: {\small Harvesting; diffusion; global analysis; competition;  numerical analysis.}\\
	\noindent
	\noindent{\it \footnotesize AMS Subject Classification 2010}:  92D25, 35K57, 35K50, 37N25, 53C35. \\
	\noindent\rule{6.35in}{0.02in}

\section{Introduction}
In population dynamics, harvesting is quite common and is always visible in ecology. In the natural or human haphazardness, harvesting reduces species due to hunting, fishing, disease, war, environmental effects like natural disasters, competition among the species for the same resources, limited living space, and limited food supply. To study the two species competition model, harvesting is an important term. To know the ecological system, harvesting must be considered for species because species are reducing continuously. To protect the species and maintain the balance of the ecological system, we should know the threshold of harvesting, so that species can not go extinct. The study of harvesting is very effective not only in ecology but also in economics. The population model with harvesting greatly impacts the economy like fisheries, forestry, plants, and poultry.

In \cite{d1}, delineated two species harvesting where harvested independently with constant rates, and the highest secure harvesting may be much less than what would be considered from a local analysis for the equilibrium point. In \cite{d2}, investigated the global behavior of predator-prey systems in the presence of continuous harvesting and preserving of either or both species. This is analogous to the characteristics of an unharvested system with several parameters. In \cite{d3}, studied the combined impacts of harvesting and discrete-time delay on the predator-prey system. A comparative examination of stability behavior has been offered in the absence of time delay. The study \cite{d4} emphasized the crucial concept in the ecological system that a perfect mathematical model cannot be gained since we cannot include all of the effective parameters in the model. Moreover, the model will never be able to forecast ecological catastrophes. As a result, we can analyze the models which describe and represent the reality of population harvesting. 
		\begin{itemize}
			\item This study aims to illustrate the comparative study between the prey and predator species harvesting rate, where we have established the result when they can coexist or when one species derive to other species to extinction, or when both species die out. This study gives a translucent idea about real-life scenarios of predator and prey species in the population ecology. 
		\end{itemize}
	
In this paper, we study the impact of harvesting on the consequence of the interaction, like the competition of two species in a spatially non-homogeneous environment. Here competition arises for the same resources, limited food supply, and limited living space; predators make predation prey species for their food. Taking into account harvesting rate is proportional to the intrinsic growth rate such that harvesting functions be $E_{1}(\bx)\propto r(\bx)$ and $ E_{2}(\bx) \propto r(\bx) $ which implies $E_{1}(\bx)=\mu r(\bx),\; E_{2}(\bx)=\nu r(\bx)$, where $\mu,\nu$ are coefficient of proportionality which are non-negative. 
	The model equations are
	\begin{equation}\label{equ_1}
	\begin{cases}
	& \displaystyle\frac{\partial u}{\partial t}= d_{1}\Delta u(t,\bx)+r(\bx)u(t,\bx)\left(1-\dfrac{u(t,\bx)+v(t,\bx)}{K(\bx)}\right)-\mu r(\bx)u(t,\bx),\\
	& \displaystyle\frac{\partial v}{\partial t}=d_{2}\Delta v(t,\bx)+r(\bx)v(t,\bx)\left(1-\dfrac{u(t,\bx)+v(t,\bx)}{K(\bx)}\right)-\nu r(\bx)v(t,\bx),\\ 
	& \displaystyle t>0,\;\;\; \bx\in\Omega,\\
	& \displaystyle\dfrac{\partial u}{\partial \eta}=\dfrac{\partial v}{\partial \eta}=0,\;\;\; \bx\in \partial\Omega,\\
	& u(0,\bx)=u_0(\bx),\;v(0,\bx)=v_0(\bx),\;\;\; \bx\in\Omega.
	\end{cases}
	\end{equation}
	where $u(t,\bx)$, $v(t,\bx)$ represent the population densities of two competing species which are non-negative, with corresponding dispersal rates $d_{1}, d_{2}$, respectively. Note that the analogous model is discussed in \cite{Ref_12}. Moreover, in \cite{Ref_12} studied directed diffusion strategies with harvesting but in this paper, we investigate for regular (random) diffusion strategy. Regular diffusion strategy quite challenging to analysis, see \cite{Ref_18, Ref_19, Ref_20, Ref_21, Ref_22, Ref_23, Ref_24} and references therein.
	There are several scenarios can happen when harvesting is applied to single or more of various interacting species and different diffusive strategy \cite{Ref_12, Ref_15}. 
In \cite{Ref_15} investigated single species with harvesting function where harvesting function is time and space {dependent} but in \cite{Ref_12} investigated competitive two species with harvesting effort where harvesting function is time-independent. In the study, \cite{Ref_16} showed a non-homogeneous Gilpin–Ayala diffusive equation for single species with harvesting where the harvesting function is space {dependent}. 

Consider the initial conditions $u_{0}(\bx)\geq 0$, $v_{0}(\bx)\geq 0$, $\bx\in\overline{\Omega}$ and these initial conditions are positive in an open nonempty subdomain of $\Omega$. 
	Carrying capacity and intrinsic growth rate are denoted by $K(\bx)$ and $r(\bx)$, respectively. 
	 The function $K(\bx)$ is continuous as well as positive on $\overline{\Omega}$ and $r(\bx)\geqslant 0$ where $\bx\in\overline\Omega$, moreover $r(\bx)$ is positive in an open nonempty subdomain of $\Omega$. The notation $\Omega$ is a bounded region in $\mathbb{R}^{n}$, typically $ n=\{1,2,3\} $, with smooth boundary $\partial \Omega\in C^{2+\alpha},\;0<\alpha<1$ and $\eta $ represents the unit normal vector on $\partial \Omega$.
	The zero Neumann boundary condition indicates that no individual crosses the boundary of the habitat or individuals going in and out at any location from the boundary stay equal at all times. The Laplace operator $\Delta:=\sum_{i=1}^n \partial^{2}/ \partial x_{i}^{2}$ in $\mathbb{R}^{n}$ implies that the random motion of the species.
	
	Now we modify the system (\ref{equ_1}) in such a way that no harvesting rate is present. The first equation of the model (\ref{equ_1}) can be written in the following way
	\begin{align*}
	\frac{\partial u}{\partial t} &= d_{1}\Delta u(t,\bx)+r(\bx)u(t,\bx)\left(1-\dfrac{u(t,\bx)+v(t,\bx)}{K(\bx)}\right)-\mu r(\bx)u(t,\bx)\\
		&= d_{1}\Delta u(t,\bx)+r(\bx)u(t,\bx)(1-\mu)\left(1 -\dfrac{u(t,\bx)+v(t,\bx)}{(1-\mu)K(\bx)}\right).
	\end{align*}
	Let $ K_{1}(\bx)=(1-\mu)K(\bx)$ and $r_{1}=1-\mu$. Then we obtain 
	\begin{align*}
	\frac{\partial u}{\partial t} = d_{1}\Delta u(t,\bx)+r_{1}r(\bx)u(t,\bx)\left(1 -\frac{u(t,\bx)+v(t,\bx)}{K_{1}(\bx)}\right).
	\end{align*}
	After same work for the second equation of system (\ref{equ_1}), finally we obtain
	\begin{equation}\label{equ_2}
	\begin{cases}
	& \displaystyle\frac{\partial u}{\partial t}= d_{1}\Delta u(t,\bx)+r_{1}r(\bx)u(t,\bx)\left(1-\frac{u(t,\bx)+v(t,\bx)}{K_{1}(\bx)}\right),\\
	& \displaystyle\frac{\partial v}{\partial t}=d_{2}\Delta v(t,\bx)+r_{2}r(\bx)v(t,\bx)\left(1-\frac{u(t,\bx)+v(t,\bx)}{K_{2}(\bx)}\right),\\ 
	& \displaystyle t>0,\;\;\; \bx\in\Omega,\\
	& \displaystyle\frac{\partial u}{\partial \eta}=\frac{\partial v}{\partial \eta}=0,\;\;\; \bx\in\ \partial\Omega,\\
	& u(0,\bx)=u_0(\bx),\;v(0,\bx)=v_0(\bx),\;\;\; \bx\in\Omega,
	\end{cases}
	\end{equation}
	here $K_{1}(\bx)=(1-\mu)K(\bx), K_{2}(\bx)=(1-\nu)K(\bx), r_{1}=1-\mu$, and $ r_{2}=1-\nu$.
		
	We solve \eqref{equ_1} numerically using a stable backward-Euler, decoupled, and linearized fully discrete time-stepping algorithm in a finite element setting and examine whether the theoretical results are supported by giving several numerical experiments.
	
	The rest of the paper is organized as below: In Section \ref{Existence-and-uniqueness}, the existence and uniqueness of the solution of equation \ref{equ_1} are proven. The necessary preliminary discussions are provided in Section \ref{Preliminaries}. In Section \ref{stability-analysis}, stability analysis of the equilibrium points is given when the intrinsic growth rate exceeds harvesting rates, one harvesting rate exceeds the intrinsic growth rate, and both harvesting rates exceed the intrinsic growth rate. To support the theoretical findings, several numerical experiments are given in Section \ref{Numerical-experiments}. Finally, a concluding summary and future research directions are discussed in Section \ref{conclusion}.
	
	\section{Existence and uniqueness}\label{Existence-and-uniqueness}
	Now we detach each equation to delineate the existence as well as the uniqueness of the paired system. Consider the following system
	\begin{equation}\label{equ_ex1}
	\begin{cases}
	& \displaystyle\frac{\partial u}{\partial t}= d_{1}\Delta u(t,\bx)+r_{1}r(\bx)u(t,\bx)\left(1-\frac{u(t,\bx)}{K_{1}(\bx)}\right),\;\;t>0,\;\; \bx\in\Omega,\\
	& u(0,\bx)=u_0(\bx),\;\;\; \bx\in\Omega,\\
	& \displaystyle\frac{\partial u}{\partial \eta}=0,\;\; \bx\in \partial\Omega.
	
	\end{cases}
	\end{equation}
	The following results also discussed in \cite{Ref_3,Ref_5,Ref_6, Ref_66}. Note that the proofs of Lemma \ref{lemma_ex1} and Lemma \ref{lemma_ex2} are analogous to the proofs of [\cite{Ref_3} Theorem 1.14, Proposition 3.2-3.3].

	\begin{Lemma}\label{lemma_ex1}
		Consider the parameters are positive on $\overline{\Omega}$ and the initial condition of (\ref{equ_ex1}) be a nonnegative continuous function $u_0(\bx)\in C(\Omega),\;u_0(\bx)\geq 0$ in $\Omega$ and $u_0(\bx)>0$ in some open bounded nonempty domain $\Omega_1 \subset \Omega$. Thus there exists a unique positive solution of the system (\ref{equ_ex1}).	
		\end{Lemma}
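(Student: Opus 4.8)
The plan is to treat \eqref{equ_ex1} as a semilinear parabolic initial–boundary value problem and to establish well-posedness by combining analytic-semigroup theory (for local existence and uniqueness) with the parabolic maximum principle (for positivity and for the global a priori bound), exactly in the spirit of the cited results [\cite{Ref_3}, Theorem~1.14, Proposition~3.2--3.3]. First I would recast the problem in abstract form $u_t = A u + f(\bx,u)$, $u(0)=u_0$, where $A:=d_1\Delta$ equipped with the homogeneous Neumann condition generates an analytic semigroup $\{e^{tA}\}_{t\ge 0}$ on $X=C(\overline{\Omega})$ (or on $L^p(\Omega)$ with $p>n$, which embeds into $C(\overline\Omega)$), and $f(\bx,u)=r_1 r(\bx)\,u\,(1-u/K_1(\bx))$. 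Since $r(\bx)$, $r_1$, and $K_1(\bx)$ are continuous and strictly positive on $\overline\Omega$, the Nemytskii map $u\mapsto f(\cdot,u)$ is locally Lipschitz from $X$ into $X$. A contraction-mapping argument on the mild (variation-of-constants) formulation $u(t)=e^{tA}u_0+\int_0^t e^{(t-s)A}f(\cdot,u(s))\,ds$ then yields a unique mild solution on a maximal interval $[0,T_{\max})$, and parabolic smoothing upgrades it to a classical solution for $t>0$; uniqueness on each subinterval follows from the local Lipschitz bound together with Gronwall's inequality.

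Next I would establish nonnegativity and strict positivity by comparison. Because $f(\bx,0)=0$, the constant function $0$ is a subsolution of \eqref{equ_ex1}, so the (weak) parabolic maximum principle gives $u(t,\bx)\ge 0$ as long as the solution exists. Using that $u_0>0$ on the nonempty open set $\Omega_1\subset\Omega$, together with the strong maximum principle and the Hopf boundary lemma for the operator $\partial_t-d_1\Delta$, the diffusion propagates positivity instantaneously, so in fact $u(t,\bx)>0$ for every $t>0$ and $\bx\in\overline\Omega$, which is the asserted positivity of the solution.

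For global existence I would exploit the logistic damping to obtain a uniform bound. Set $\bar u:=\max\{\|u_0\|_{\infty},\ \max_{\overline\Omega}K_1\}$. Since $f(\bx,\bar u)=r_1 r(\bx)\,\bar u\,(1-\bar u/K_1(\bx))\le 0$ whenever $\bar u\ge K_1(\bx)$, the constant $\bar u$ is a supersolution, and the comparison principle gives $0\le u(t,\bx)\le \bar u$ on $[0,T_{\max})$. This uniform $L^\infty$ bound rules out finite-time blow-up, forcing $T_{\max}=\infty$; combined with the local uniqueness this produces the unique global positive solution claimed. The delicate part of the write-up will not be the abstract local theory (which is standard once the coefficients are seen to be positive and continuous), but rather the rigorous coupling of the two ingredients: upgrading the mild solution to enough regularity that the strong maximum principle and the comparison principle genuinely apply, and verifying that the constant super/sub-solutions are admissible for the Neumann problem on the heterogeneous domain. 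Once these regularity and comparison technicalities are handled, existence, uniqueness, and positivity follow as above.
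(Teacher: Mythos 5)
Your proposal is correct, but it takes a genuinely different route from the paper. You establish well-posedness through the abstract semigroup framework: local existence and uniqueness by contraction on the mild (variation-of-constants) formulation with the locally Lipschitz Nemytskii map, nonnegativity and strict positivity via the weak and strong maximum principles plus the Hopf lemma, and global existence from the constant supersolution $\bar u=\max\{\|u_0\|_\infty,\ \max_{\overline\Omega}K_1\}$, whose uniform $L^\infty$ cap rules out blow-up. The paper instead follows the Cantrell--Cosner sub-/supersolution machinery: it writes $f(\bx,u)=g(\bx,u)u$, passes to the eigenvalue problem (\ref{equ_ex1.2}) for the linearization at $0$, takes a positive principal eigenfunction $\Psi$ with principal eigenvalue $\sigma_1>0$, and verifies that $\epsilon\Psi$ is a strict subsolution for small $\epsilon>0$ while $K_1$ serves as a supersolution; monotone iteration then produces a solution increasing in $t$ toward the minimal positive steady state $u^*$, with positivity again coming from the strong maximum principle. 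The trade-off: your argument treats arbitrary nonnegative bounded initial data directly, gives uniqueness cleanly via Gronwall, and does not need positivity of the principal eigenvalue as an input, so it is the more self-contained proof of pure well-posedness; the paper's construction, by contrast, buys the asymptotic information --- the increasing orbit $\underline{u}(\cdot,t)\uparrow u^*$ and the minimality of $u^*$ --- which is exactly what the subsequent Lemma 2 exploits to prove convergence $u(t,\cdot)\to u^*$. The technical caveat you flag at the end is real and shared by both routes: the coefficients are only assumed continuous, so upgrading mild to classical solutions (or, in the paper's version, justifying the admissibility of the sub-/supersolutions and the applicability of the strong maximum principle under Neumann conditions) requires the standard smoothing/approximation care that neither write-up spells out.
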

	\begin{proof}
		Take into account
		\begin{align*}
		f(\bx,u)=g(\bx,u)u=r_{1}r(\bx)u(t,\bx)\left(1-\dfrac{u(t,\bx)}{K_{1}(\bx)}\right)
		\end{align*}
		where, $g(\bx,u)=r_{1}r(\bx)\left(1-\dfrac{u(t,\bx)}{K_{1}(\bx)}\right)$. The system (\ref{equ_ex1}) becomes
		\begin{equation}\label{equ_ex1.1}
		\begin{cases}
		& \displaystyle\frac{\partial u}{\partial t}= d_{1}\Delta u(t,\bx)+g(\bx,u)u,\;\;t>0,\;\; \bx\in\Omega,\\
		& u(0,\bx)=u_0(\bx),\;\;\; \bx\in\Omega,\\
		& \displaystyle\frac{\partial u}{\partial \eta}=0,\;\; \bx\in \partial\Omega.
		\end{cases}
		\end{equation}
 Here, $f(\bx, u)$ is Lipschitz in $u$ as well as a measurable function in $\bx$, moreover bounded since $u$ constrain to a bounded set, here $\Omega$ is bounded and $\partial \Omega$ is member of class $C^{2+\alpha}$. Assume $f(\bx,u)=g(\bx,u)u$ where in $u$, $g(\bx,u)$ is member of class $C^2$, further there exists $K_1>0$ which implies that $g(\bx,u)<0$ when $u>K_1$. The corresponding eigenvalue problem of (\ref{equ_ex1.1}) is represented in the below
		\begin{equation}\label{equ_ex1.2}
		\sigma\psi = d_{1}\Delta \psi+g(\bx,0)\psi ,\;\; \bx\in \Omega,\;\; \frac{\partial \psi}{\partial \eta}=0,\;\; \bx\in\partial\Omega.
		\end{equation}
The following can be written on the assumptions of $f(\bx,u)$
		such that $f(\bx,u)=\left(g(\bx,0)+ g_1(\bx,u) u \right)u $. If this problem has a positive principal eigenvalue $ \sigma_{1} $.
	Let $\Psi$ be an eigenfunction for (\ref{equ_ex1.2}) with $\Psi>0$ on $\Omega$. For $\epsilon>0$  sufficiently small,
	\begin{align*}
	 d_{1}\Delta (\epsilon \Psi)+f(\bx,\epsilon \Psi)&=\epsilon[d_1\Delta (\Psi)+g(\bx,0)\Psi]+ g_1(\bx,\epsilon\Psi)\epsilon^2\Psi^2 \\
	 & = \epsilon\sigma_1  \Psi+ g_1(\bx,\epsilon\Psi)\epsilon^2\Psi^2 \\
	 & = \epsilon \Psi \left\{\sigma_1+ g_1(\bx,\epsilon\Psi)\epsilon\Psi\right\}>0.
	\end{align*}
	Thus, $\epsilon\Psi$ is a subsolution of the elliptic equation when $\epsilon>0$ is small.
		\begin{align*}
		\begin{cases}
		&  d_{1}\Delta u+f(\bx,u)=0,\;\; \bx\in\Omega,\\
		& \displaystyle\frac{\partial u}{\partial \eta}=0,\;\; \bx\in \partial\Omega		
		\end{cases}
		\end{align*}
		corresponding to (\ref{equ_ex1}). 
When $\underline{u}(\bx,0)=\epsilon\Psi$, then $\underline{u}(\bx,t)$ is a solution of (\ref{equ_ex1}). At $t=0$, $\partial \underline{u}/\partial t>0$ on $\Omega$ as well as supersolutions' and sub-solutions' general features delineates that in $t$, $\underline{u}(\bx,t)$ is increasing. If $K_1>\underline{u}$ is a supersolution and $u^*$ is the minimal positive solution to (\ref{equ_ex1}) then we have $\underline{u}(\bx,t) \uparrow  u^*(\bx)$ where $t\rightarrow\infty$. (When $\Psi$  be a strict subsolution for each sufficiently small  $\epsilon> 0$, then $u^*(\bx)$ is minimal.) Since $u(x, t)$ is positive but initially nonnegative then $u(x, t)$ be a solution of (\ref{equ_ex1}), hence when $t > 0$, the strong maximum principle exposes $u(\bx, t) > 0$ on $\overline{\Omega}$, which completes the proof.
	\end{proof}

	\begin{Lemma}\label{lemma_ex2}
		 Consider the problem  (\ref{equ_ex1}), then there exists a function $u^*(\bx)>0$ that is a unique equilibrium solution of (\ref{equ_ex1}). Further, for any initial condition $u_0(\bx)\geq 0,\; u_0(\bx)\not\equiv 0$ the solution $u(t, \bx)$ gratifies the condition 
		\begin{align*}
		\lim_{t\rightarrow\infty} u(t, \bx)=u^*(\bx)
		\end{align*}
		uniformly for $\bx\in \overline{\Omega}$.
	\end{Lemma}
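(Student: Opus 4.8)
The plan is to break the statement into three parts: existence of a positive equilibrium $u^*$, its uniqueness, and the global convergence of every nontrivial nonnegative trajectory to $u^*$. Existence I would take essentially for free from Lemma \ref{lemma_ex1}: the construction there already produces a minimal positive steady state as the increasing limit $\underline{u}(\bx,t)\uparrow u^*(\bx)$ of the trajectory issued from the subsolution $\epsilon\Psi$, while a sufficiently large constant $M\ge \max_{\overline\Omega}K_1$ serves as a supersolution of the corresponding elliptic problem since $r_1 r(\bx)M(1-M/K_1)\le 0$. Thus a positive equilibrium exists and lies between $\epsilon\Psi$ and $M$; what remains is to show it is the only one and that it attracts all nontrivial data.

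For uniqueness I would exploit that the per-capita growth $g(\bx,u)=r_1 r(\bx)\bigl(1-u/K_1(\bx)\bigr)$ is strictly decreasing in $u$, which places the problem in the Brezis--Oswald/Picone framework. Suppose $u_1,u_2>0$ both solve $d_1\Delta u_i+r_1 r(\bx)u_i(1-u_i/K_1)=0$ with Neumann data. Dividing by $u_i$ gives $-d_1\Delta u_i/u_i=r_1 r(\bx)(1-u_i/K_1)$, so that
\[
\left(\frac{-\Delta u_1}{u_1}-\frac{-\Delta u_2}{u_2}\right)(u_1^2-u_2^2)=-\frac{r_1 r(\bx)}{d_1 K_1(\bx)}\,(u_1-u_2)^2(u_1+u_2)\le 0 .
\]
Integrating over $\Omega$ keeps the right-hand side $\le 0$, whereas Picone's identity together with the Neumann boundary condition forces the integral of the left-hand side to be $\ge 0$. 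Both integrals must therefore vanish, and since $u_1+u_2>0$ this yields $u_1\equiv u_2$ (directly on the set where $r>0$, and then on all of $\Omega$ through the equation). In particular the minimal positive solution from Lemma \ref{lemma_ex1} coincides with the maximal one.

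For the convergence statement I would argue by squeezing in the monotone comparison sense. By the strong maximum principle, any solution with $u_0\ge 0$, $u_0\not\equiv 0$ satisfies $u(t_0,\bx)>0$ on $\overline\Omega$ for some small $t_0>0$, so without loss of generality the datum is bounded below by a positive constant. Choosing $\epsilon>0$ small and $M$ large so that $\epsilon\Psi\le u(t_0,\cdot)\le M$, the parabolic comparison principle sandwiches $u$ between the trajectory from $\epsilon\Psi$, which increases to the minimal equilibrium, and the trajectory from the constant supersolution $M$, which decreases to the maximal equilibrium. By the uniqueness just established both limits equal $u^*$, whence $u(t,\bx)\to u^*(\bx)$ pointwise and monotonically from each side. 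Because $u^*$ and the two barriers are continuous on the compact set $\overline\Omega$, Dini's theorem upgrades this to uniform convergence.

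The main obstacle is the uniqueness step: verifying rigorously that $\int_\Omega\bigl(-\Delta u_1/u_1+\Delta u_2/u_2\bigr)(u_1^2-u_2^2)\ge 0$ via Picone's identity, and, if one keeps the paper's general hypothesis that $r\ge 0$ vanishes outside a subdomain, propagating the equality $u_1=u_2$ from $\{r>0\}$ to all of $\Omega$ through the harmonic equation satisfied where $r=0$. Once uniqueness is secured, the sub/supersolution squeezing and the appeal to Dini's theorem are routine.
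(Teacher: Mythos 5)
Your proposal is correct, but it reaches uniqueness by a genuinely different mechanism than the paper. The paper's proof handles uniqueness spectrally: any positive steady state $w$ is itself a positive eigenfunction with eigenvalue $0$ of the linearization $\sigma\psi=d_1\Delta\psi+g(\bx,w)\psi$, so $\sigma_1=0$ must be the principal eigenvalue both for $w=u^*$ (problem (\ref{equ_ex1.3})) and for a putative second solution $w=u^{**}$ (problem (\ref{equ_ex1.4})); since $u^{**}>u^*$ on part of $\Omega$ and $g(\bx,\cdot)$ is strictly decreasing, strict monotonicity of the principal eigenvalue in the potential forces the two principal eigenvalues to differ, a contradiction. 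You instead run the Picone/Brezis--Oswald integral identity, which trades spectral theory for an elementary integration-by-parts argument; note that your flagged obstacle about propagating $u_1\equiv u_2$ from $\{r>0\}$ to all of $\Omega$ is actually resolved for free by the identity itself, since vanishing of the gradient terms $\bigl|\nabla u_1-\tfrac{u_1}{u_2}\nabla u_2\bigr|^2$ gives $\nabla(u_1/u_2)\equiv 0$, hence $u_1=c\,u_2$ globally, and $c=1$ on the nonempty open set where $r>0$ — no separate argument via the harmonic equation is needed. On the convergence half, your treatment is if anything more complete than the paper's: the paper's proof of this lemma addresses only uniqueness and leaves the attraction of arbitrary nontrivial data implicit in the monotone sub/supersolution construction of Lemma \ref{lemma_ex1} (following Cantrell--Cosner), whereas you make the squeeze explicit — positivity at a small time $t_0$ by the strong maximum principle, sandwiching between the increasing trajectory from $\epsilon\Psi$ and the decreasing trajectory from a constant supersolution $M\ge\max_{\overline\Omega}K_1$, identification of both monotone limits with $u^*$ by uniqueness, and the upgrade from monotone pointwise to uniform convergence via Dini's theorem on the compact set $\overline\Omega$. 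Both routes rest on the same structural hypothesis, the strict decrease of the per-capita growth $g(\bx,u)$ in $u$; the paper's eigenvalue comparison is shorter given the variational machinery already used elsewhere in the paper, while yours is more self-contained and generalizes more readily to operators without a convenient variational characterization.
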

\begin{proof}
 Assume the  hypotheses of Lemma \ref{lemma_ex1} are contented, which implies that $g(\bx,u)$ strictly decreasing in $u$ where $u \geq 0$ and $f(\bx,u) = g(\bx,u)u$. Hence the minimal positive steady state $u^*$ is the sole positive steady state of (\ref{equ_ex1}). Now take into account $u^{**}$  is a another positive steady state of (\ref{equ_ex1}) where $u^{*}\neq u^{**}$, therefore when $u^*$ is minimal positive steady state then we have $u^{**}>u^*$ someplace on $\Omega$. When $u^*> 0$ is a steady state of (\ref{equ_ex1}) then it would be a positive solution of the following equation
	\begin{equation}\label{equ_ex1.3}
	\sigma\psi = d_{1}\Delta \psi+g(\bx,u^*)\psi ,\;\; \bx\in \Omega,\;\; \frac{\partial \psi}{\partial \eta}=0,\;\; \bx\in\partial\Omega,
	\end{equation}
	letting $\sigma=0$ for any $\psi$, so that $\sigma_1=0$  would be the principal eigenvalue of (\ref{equ_ex1.3}). Analogously $u^{**}>0$ gratifies 
	\begin{equation}\label{equ_ex1.4}
	\sigma\psi = d_{1}\Delta \psi+g(\bx,u^{**})\psi ,\;\; \bx\in \Omega,\;\; \frac{\partial \psi}{\partial \eta}=0,\;\; \bx\in\partial\Omega,
	\end{equation}
	with $\sigma=0$ for any $\psi$, so $\sigma_1=0$ in (\ref{equ_ex1.4}) also. 
The principal eigenvalue of (\ref{equ_ex1.4}) obviously less than the principal eigenvalue of (\ref{equ_ex1.3}) because $u^{**}>u^*$ on at least part of $\Omega$ as well as $g(\bx,u)$ is strictly decreasing in $u$. Thus $\sigma_1 = 0$ cannot have in both (\ref{equ_ex1.3}) and (\ref{equ_ex1.4}), hence (\ref{equ_ex1}) cannot be any steady state other than the minimal steady sate $u^*$, which completes the proof.
\end{proof}

	Now, take into account the next following problem for population density $v=v(t,\bx)$
	\begin{equation}\label{equ_ex2}
	\begin{cases}
	& \displaystyle\frac{\partial v}{\partial t}= d_{2}\Delta v(t,\bx)+r_{2}r(\bx)v(t,\bx)\left(1-\frac{v(t,\bx)}{K_{2}(\bx)}\right),\;\;t>0,\;\; \bx\in\Omega,\\
	& v(0,\bx)=v_0(\bx),\;\;\; \bx\in\Omega,\\
	& \displaystyle\frac{\partial v}{\partial \eta}=0,\;\; \bx\in \partial\Omega.
	\end{cases}
	\end{equation}
	\begin{Lemma}\label{lemma_ex3}
		  Consider the initial condition of  (\ref{equ_ex2}) be a continuous non-negative function $v_0(\bx)\in C(\Omega),\;v_0(\bx)\geq 0$ in $\Omega$ and $v_0(\bx)>0$ in some open bounded nonempty domain $\Omega_1 \subset \Omega$. Therefore there exists a unique positive solution of the system (\ref{equ_ex2}).
		\end{Lemma}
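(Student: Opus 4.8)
The plan is to exploit the fact that (\ref{equ_ex2}) is nothing but (\ref{equ_ex1}) with the substitutions $u \mapsto v$, $d_1 \mapsto d_2$, $r_1 \mapsto r_2$, $K_1 \mapsto K_2$, and $u_0 \mapsto v_0$; it is again a scalar logistic-type reaction-diffusion equation under zero Neumann data. Hence the cleanest route is to recast (\ref{equ_ex2}) in the abstract form treated in Lemma \ref{lemma_ex1} and transfer that argument verbatim, rather than redo the analysis from scratch.

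Concretely, I would write the reaction term as $f(\bx,v)=g(\bx,v)\,v$ with $g(\bx,v)=r_2 r(\bx)\left(1-v/K_2(\bx)\right)$, and then check the three structural hypotheses used in Lemma \ref{lemma_ex1}: that $f$ is Lipschitz in $v$, measurable in $\bx$, and bounded whenever $v$ ranges over a bounded set; that $g(\bx,\cdot)$ is of class $C^2$ (here affine) and strictly decreasing, since $\partial_v g = -r_2 r(\bx)/K_2(\bx)\le 0$; and that there is a threshold $K_2>0$ beyond which $g(\bx,v)<0$. All of these follow from the continuity and positivity of $r$ and $K_2$ on the compact set $\overline{\Omega}$, together with $K_2$ being bounded away from zero. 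The one place where care is needed is that these sign properties demand $r_2=1-\nu>0$ and $K_2=(1-\nu)K(\bx)>0$, i.e. the standing assumption $\nu<1$; this is the sole substantive checkpoint and is the only point where the proof really uses a hypothesis on the parameters.

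With the setup in place, I would reproduce the sub/supersolution scheme. The linearized eigenvalue problem $\sigma\psi=d_2\Delta\psi+g(\bx,0)\psi$ with $\partial_\eta\psi=0$ has a positive principal eigenvalue $\sigma_1$ (testing with the constant function shows $\sigma_1\ge |\Omega|^{-1}\int_\Omega r_2 r>0$, as $r\ge 0$ is positive on a nonempty open set) and a positive eigenfunction $\Psi$. For small $\epsilon>0$ one gets $d_2\Delta(\epsilon\Psi)+f(\bx,\epsilon\Psi)=\epsilon\Psi\{\sigma_1+g_1(\bx,\epsilon\Psi)\epsilon\Psi\}>0$, so $\epsilon\Psi$ is a strict subsolution, while any constant $M\ge\max\{\max_{\overline\Omega}K_2,\|v_0\|_\infty\}$ is a supersolution because then $f(\bx,M)\le 0$. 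Monotonicity of the induced flow yields $\underline v(\bx,t)\uparrow v^*(\bx)$ to the minimal positive steady state, and, since the solution starts nonnegative but not identically zero, the strong maximum principle promotes it to $v(t,\bx)>0$ on $\overline\Omega$ for every $t>0$.

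For uniqueness, the uniqueness of the time-dependent solution is immediate from the Lipschitz dependence of $f$ on $v$ and standard parabolic well-posedness, while uniqueness of the positive equilibrium $v^*$ follows exactly as in Lemma \ref{lemma_ex2}: two ordered positive steady states would both force $0$ to be the principal eigenvalue of linearized problems with strictly ordered potentials $g(\bx,v^*)$ and $g(\bx,v^{**})$, contradicting strict monotonicity of $g$. Since every step is a relabeled instance of Lemma \ref{lemma_ex1} (and Lemma \ref{lemma_ex2}), I anticipate no genuine obstacle; the only item worth stating explicitly is the logistic sign structure guaranteed by $\nu<1$.
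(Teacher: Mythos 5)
Your proposal is correct and follows essentially the same route as the paper, which disposes of this lemma in one line by declaring it analogous to Lemma \ref{lemma_ex1}; you have simply carried out that relabeling ($u\mapsto v$, $d_1\mapsto d_2$, $r_1\mapsto r_2$, $K_1\mapsto K_2$) explicitly and verified the structural hypotheses, sub/supersolution construction, and strong maximum principle step in the new variables. Your added observation that $\nu<1$ is the one substantive checkpoint guaranteeing $r_2>0$ and $K_2>0$ is a worthwhile clarification, but it does not constitute a different method.
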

	\begin{proof}
		The proof is analogous of Lemma \ref{lemma_ex1}.
	\end{proof}
	\begin{Lemma}\label{lemma_ex4}
	 Consider the problem  (\ref{equ_ex2}), thus there exists a function $v^*(\bx)>0$ which is a unique stationary solution of (\ref{equ_ex2}). Further, for any initial condition $v_0(\bx)\geq 0,\; v_0(\bx)\not\equiv 0$ the solution $v(t, \bx)$ satisfies the condition 
	\begin{align*}
	\lim_{t\rightarrow\infty} v(t, \bx)=v^*(\bx)
	\end{align*}
	uniformly for $\bx\in \overline{\Omega}$.
			\end{Lemma}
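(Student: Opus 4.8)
The plan is to mirror the argument of Lemma \ref{lemma_ex2} essentially verbatim, since the $v$-equation \eqref{equ_ex2} has exactly the same structure as the $u$-equation \eqref{equ_ex1} with $(d_1,r_1,K_1)$ replaced by $(d_2,r_2,K_2)$. First I would set $g(\bx,v)=r_2 r(\bx)\left(1-\frac{v}{K_2(\bx)}\right)$ and record that $g(\bx,\cdot)$ is strictly decreasing in $v$ for $v\geq 0$, with $g(\bx,v)<0$ once $v>K_2(\bx)$. By Lemma \ref{lemma_ex3} the problem admits a minimal positive steady state $v^*$; the first task is to promote this to \emph{uniqueness} of the positive equilibrium.

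For uniqueness I would argue by contradiction. Suppose $v^{**}$ were a second positive steady state; minimality of $v^*$ forces $v^{**}>v^*$ somewhere on $\Omega$. Each steady state is a positive solution of its linearized eigenvalue problem
\begin{equation*}
\sigma\psi = d_2\Delta\psi + g(\bx,v^*)\psi, \qquad \sigma\psi = d_2\Delta\psi + g(\bx,v^{**})\psi,
\end{equation*}
which forces the principal eigenvalue to equal $0$ in both. But strict monotonicity of $g$ together with $v^{**}>v^*$ on an open subset of $\Omega$ makes the principal eigenvalue of the second problem strictly smaller than that of the first, so they cannot both vanish, a contradiction. Hence $v^*$ is the unique positive equilibrium.

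For the convergence statement I would invoke the monotone sub/supersolution machinery. Since $v_0\geq 0$ is nontrivial, the strong maximum principle gives $v(t_0,\cdot)>0$ on $\overline{\Omega}$ for any fixed $t_0>0$; I would then pick $\epsilon>0$ small enough that $\epsilon\Psi$ (with $\Psi$ the positive principal eigenfunction of the linearization at $0$) is a subsolution lying below $v(t_0,\cdot)$, together with a constant $M\geq\max\{\,\|v(t_0,\cdot)\|_\infty,\ \max_{\overline{\Omega}}K_2\,\}$ that is a supersolution. The trajectory issuing from $\epsilon\Psi$ increases monotonically to the minimal steady state $v^*$, while the trajectory from $M$ decreases monotonically to a steady state which, by the uniqueness just established, must again be $v^*$. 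The comparison principle squeezes $v(t,\cdot)$ between these two monotone trajectories, forcing $v(t,\bx)\to v^*(\bx)$.

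The step I expect to be the most delicate is upgrading \emph{pointwise} to \emph{uniform} convergence on $\overline{\Omega}$: this hinges on both enveloping trajectories converging to the \emph{same} limit $v^*$, which is exactly why the uniqueness argument must precede it, and on parabolic regularity and compactness to convert the monotone pointwise limits into uniform convergence on the compact set $\overline{\Omega}$.
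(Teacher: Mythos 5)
Your proposal is correct and follows essentially the same route as the paper: the paper proves this lemma by declaring it analogous to Lemma \ref{lemma_ex2}, whose argument is precisely your eigenvalue-comparison uniqueness proof (two positive steady states would each force the principal eigenvalue of the linearization $\sigma\psi = d_2\Delta\psi + g(\bx,v^{*})\psi$ to vanish, contradicting strict monotonicity of $g$ in $v$), combined with the sub/supersolution construction from Lemma \ref{lemma_ex1}. Your added detail on squeezing $v(t,\cdot)$ between the monotone trajectories from $\epsilon\Psi$ and a constant supersolution, and on upgrading pointwise to uniform convergence via parabolic regularity, fills in the convergence step that the paper leaves implicit (deferring to \cite{Ref_3}), and is a sound completion of the same argument rather than a different one.
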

	\begin{proof}
		The proof is analogous of Lemma \ref{lemma_ex2}.
	\end{proof}

	The final result demonstrates the existence as well as the uniqueness of solutions to a paired system (\ref{equ_2}). Note that the following proof is analogous with [\cite{Ref_6}, Theorem 5]. 
	\begin{Th}\label{lemma_ex5}
		 Let $K_1(\bx), K_2(\bx)>0$ which implies $\mu,\nu\in[0,1)$, and $r(\bx)>0$ on $\bx\in \overline{\Omega}$. When $u_0(\bx), \;v_0(\bx)\in C(\Omega)$ the model (\ref{equ_2}) has a unique solution $(u, v)$. Further, if both initial functions $u_0$ and $v_0$ are non-negative as well as nontrivial, thus $u(t, \bx) > 0$ and $v(t, \bx) > 0$ for $ t > 0$.
	\end{Th}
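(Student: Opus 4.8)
The plan is to view \eqref{equ_2} as a semilinear parabolic system and to combine standard local well-posedness with comparison arguments anchored on the scalar results of Lemmas \ref{lemma_ex1}--\ref{lemma_ex4}. Writing $w=(u,v)$, the system has the abstract form $\partial_t w = Aw + F(w)$ on $X = C(\overline{\Omega})\times C(\overline{\Omega})$, where $A=\mathrm{diag}(d_1\Delta, d_2\Delta)$ is the sectorial Neumann Laplacian and $F(u,v)=(f_1,f_2)$ with $f_1 = r_1 r(\bx)u(1-(u+v)/K_1)$, $f_2 = r_2 r(\bx)v(1-(u+v)/K_2)$. Since $r,K_1,K_2$ are continuous on $\overline{\Omega}$ and $F$ is polynomial in $(u,v)$, the map $F$ is locally Lipschitz on $X$; standard semigroup/parabolic theory then yields a unique classical solution on a maximal interval $[0,T_{\max})$, settling local existence and uniqueness for $u_0,v_0\in C(\Omega)$.

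I would then establish invariance of the nonnegative cone. The reaction is quasi-positive, namely $f_1(0,v)=0$ and $f_2(u,0)=0$ for all $u,v$, so the vector field never points strictly out of the nonnegative orthant along its boundary; hence $u_0,v_0\geq 0$ forces $u(t,\cdot),v(t,\cdot)\geq 0$ for all $t\in[0,T_{\max})$. With nonnegativity secured, uniform $L^\infty$ bounds follow by comparison with the decoupled logistic problems: because $v\geq 0$, the first equation gives $\partial_t u \leq d_1\Delta u + r_1 r(\bx)u(1-u/K_1)$, so $u$ is a subsolution of \eqref{equ_ex1}, and Lemmas \ref{lemma_ex1}--\ref{lemma_ex2} provide a $t$-independent bound $0\leq u \leq \bar u$; the same argument applied to \eqref{equ_ex2} bounds $v$. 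These uniform bounds preclude finite-time blow-up, so $T_{\max}=\infty$ and the solution is global.

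For strict positivity I would invoke the strong maximum principle. Writing the first equation as $\partial_t u = d_1\Delta u + c(t,\bx)u$ with the now-bounded coefficient $c = r_1 r(\bx)(1-(u+v)/K_1)$, a nontrivial nonnegative $u_0$ yields $u(t,\bx)>0$ on $\overline{\Omega}$ for every $t>0$, and symmetrically for $v$; this gives the final claim.

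The step I expect to be the main obstacle is the upper bound, precisely because the system is competitive rather than cooperative: the standard joint comparison principle does not apply to the pair $(u,v)$ at once, so the bounds must be obtained component-wise, exploiting only the one-sided fact that the competing species is nonnegative and relying on the scalar theory of Lemmas \ref{lemma_ex1}--\ref{lemma_ex4}. The care lies in ordering the arguments correctly---establishing nonnegativity before invoking comparison---rather than in producing any new estimate beyond the scalar case.
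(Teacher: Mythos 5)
Your proposal is correct, but it reaches the conclusion by a genuinely different route than the paper. The paper does not use semigroup theory or cone-invariance arguments at all: it invokes the coupled upper--lower solution theorem for quasimonotone \emph{nonincreasing} (competitive) parabolic systems (Theorem \ref{th_A.2}, from Pao), taking the constant pair $(\rho_u,\rho_v)$ with $\rho_u>\sup_{\bx\in\Omega}u_0$, $\rho_v>\sup_{\bx\in\Omega}v_0$ together with $(0,0)$ as ordered upper/lower solutions, and verifying the corner sign conditions $f_1(t,\bx,\rho_u,0)\leq 0\leq f_1(t,\bx,0,\rho_v)$, $f_2(t,\bx,0,\rho_v)\leq 0\leq f_2(t,\bx,\rho_u,0)$; existence, uniqueness, the invariant rectangle $\mathbf{S}_\rho$, global boundedness, and positivity for nontrivial data then come out of a single theorem application. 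Your approach---local well-posedness via the locally Lipschitz nonlinearity, nonnegativity via quasi-positivity, componentwise scalar comparison with \eqref{equ_ex1}--\eqref{equ_ex2} for uniform bounds, and the strong maximum principle for strict positivity---is sound in the order you present it (nonnegativity before comparison is indeed the essential sequencing), and it is more general: it never uses the quasimonotone structure, so it would survive non-competitive couplings, at the price of more machinery. Your closing remark that the joint comparison principle fails for the pair $(u,v)$ is exactly the difficulty that the paper's Theorem \ref{th_A.2} is engineered to bypass via the mixed ordering ($u$ up, $v$ down), which is why the paper's proof is shorter. One point worth noting in your favor: the corner condition $f_1(t,\bx,\rho_u,0)\leq 0$ actually requires $\rho_u\geq\sup_{\bx\in\Omega}K_1(\bx)$, which the paper's stated choice $\rho_u>\sup_{\bx\in\Omega}u_0(\bx)$ does not by itself guarantee (the later Theorem \ref{lemma_10} repairs this by building $\sup_{\bx\in\Omega}K(\bx)$ into the constants); your comparison-based bound $u\leq\max\left\{\sup_{\bx\in\Omega}u_0,\;\sup_{\bx\in\Omega}K_1\right\}$ sidesteps this issue cleanly.
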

	\begin{proof}
		Take into account the following system with  $\mu,\nu\in[0,1)$
		\begin{equation}\label{equ_5.1}
		\begin{cases}
		& \displaystyle\frac{\partial u}{\partial t}= d_{1}\Delta u(t,\bx)+r_{1}r(\bx)u(t,\bx)\left(1-\frac{u(t,\bx)+v(t,\bx)}{K_{1}(\bx)}\right),\\
		& \displaystyle\frac{\partial v}{\partial t}=d_{2}\Delta v(t,\bx)+r_{2}r(\bx)v(t,\bx)\left(1-\frac{u(t,\bx)+v(t,\bx)}{K_{2}(\bx)}\right),\\ 
		& \displaystyle t>0,\;\;\; \bx\in\Omega,\\
		& \displaystyle\frac{\partial u}{\partial \eta}=\frac{\partial v}{\partial \eta}=0,\;\;\; \bx\in \partial\Omega,\\
		& u(0,\bx)=u_0(\bx),\;v(0,\bx)=v_0(\bx),\;\;\; \bx\in\Omega,
		\end{cases}
		\end{equation}
		where $K_{1}(\bx)=(1-\mu)K(\bx)>0,\; K_{2}(\bx)=(1-\nu)K(\bx)>0,\; r_{1}=1-\mu>0,\; r_{2}=1-\nu>0$, since $\mu,\nu\in[0,1)$.
		
		We utilize Theorem \ref{th_A.2} from Appendix \ref{A} and methods which is analogous to the proof of \cite{Ref_6}, to show existence of nontrivial time-dependent solutions. We choose the following constants
		\begin{align*}
		\rho_u >\sup_{\bx\in\Omega} u_0(\bx)>0,\;\text{and}\; \rho_v > \sup_{\bx\in\Omega} v_0(\bx)>0,
		\end{align*}
		and use the notations of Theorem \ref{th_A.2} from Appendix \ref{A} and denote
		\begin{align*}
		& f_1(t,\bx,u,v)=r_1r(\bx)u(t,\bx)\left(1-\frac{u(t,\bx)+v(t,\bx)}{K_1(\bx)}\right),\\
		& f_2(t,\bx,u,v)=r_2r(\bx)v(t,\bx)\left(1-\frac{u(t,\bx)+v(t,\bx)}{K_2(\bx)}\right).
		\end{align*}
		Then it is simple to examine that the following conditions of the theorem are satisfied 
		\begin{equation}\label{equ_5.2}
		\begin{cases}
		&	f_1(t,\bx,\rho_u,0)\leq 0\leq f_1(t,\bx,0,\rho_v),\\
		& f_2(t,\bx,0,\rho_v)\leq 0\leq f_2(t,\bx,\rho_u,0).
		\end{cases}
		\end{equation}
		The conditions (\ref{equ_5.2}) satisfy the conditions of Theorem \ref{th_A.2} from Appendix \ref{A} for the functions $f_1$  and $f_2$ deﬁned above. Therefore we arrive at the conclusion of the theorem that nontrivial $(u_0(\bx),v_0(\bx))$ such that 
		\begin{equation}\label{equ_5.3}
		(u_0,v_0)\in \mathbf{S}_\rho\equiv  \left\{\left(u_1,v_1\right)\in C \left([0,\infty) \times\overline{\Omega}\right)\times C \left([0,\infty) \times\overline{\Omega}\right) ; 0\leq u_1\leq \rho_u,\; 0\leq v_1\leq \rho_v\right\} 
		\end{equation}
		where $ C \left([0,\infty) \times\overline{\Omega}\right)$  is the class of continuous functions on $[0,\infty) \times\overline{\Omega}$, a unique solution $(u(t,\bx),v(t,\bx))$ for the system (\ref{equ_5.1}) exists and remains in $\mathbf{S}_\rho$ for all $(t,\bx)\in [0,\infty) \times\overline{\Omega}$. 
		Thus, $(u(t,\bx), v(t,\bx))$ is unique and positive solution.
		\end{proof}

Let us establish the existence result for (\ref{equ_1}) for $0\leq\nu< 1\leq\mu$. Note that the proof is analogous with \cite{Ref_66}.
\begin{Th}\label{lemma_10}
	Assume $0\leq\nu< 1\leq\mu$ and let the initial conditions be $ u_0,v_0\geq 0$, therefore the model (\ref{equ_1}) has a unique positive time-dependent nontrivial solution.
\end{Th}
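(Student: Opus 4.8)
The plan is to follow the upper--lower solution strategy used for Theorem \ref{lemma_ex5}, invoking the comparison result Theorem \ref{th_A.2} of Appendix \ref{A}, but with one essential structural modification. In the regime $\mu,\nu\in[0,1)$ the change of variables $K_1=(1-\mu)K$, $r_1=1-\mu$ recast the problem into the logistic competition form (\ref{equ_2}) with \emph{positive} carrying capacities and growth coefficients. When $\mu\geq 1$ this reduction is no longer available, since $K_1=(1-\mu)K\leq 0$ and $r_1=1-\mu\leq 0$; I would therefore abandon (\ref{equ_2}) and argue directly on the original nonlinearities of (\ref{equ_1}), which remain perfectly well-defined.

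Concretely, I would set
\begin{align*}
f_1(t,\bx,u,v)&=r(\bx)u\left(1-\frac{u+v}{K(\bx)}\right)-\mu r(\bx)u=r(\bx)u\left[(1-\mu)-\frac{u+v}{K(\bx)}\right],\\
f_2(t,\bx,u,v)&=r(\bx)v\left(1-\frac{u+v}{K(\bx)}\right)-\nu r(\bx)v=r(\bx)v\left[(1-\nu)-\frac{u+v}{K(\bx)}\right],
\end{align*}
which are smooth (polynomial) in $(u,v)$, and note that the system is competitive (quasimonotone nonincreasing), since $\partial f_1/\partial v=-r(\bx)u/K(\bx)\leq 0$ and $\partial f_2/\partial u=-r(\bx)v/K(\bx)\leq 0$ for $u,v\geq 0$, which is exactly the structure required by Theorem \ref{th_A.2}. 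I would then choose constants
\begin{align*}
\rho_u>\sup_{\bx\in\Omega}u_0(\bx),\qquad \rho_v>\max\left\{\sup_{\bx\in\Omega}v_0(\bx),\;(1-\nu)\sup_{\bx\in\overline\Omega}K(\bx)\right\},
\end{align*}
and verify the invariant-rectangle inequalities
\begin{align*}
f_1(t,\bx,\rho_u,0)\leq 0\leq f_1(t,\bx,0,\rho_v),\qquad f_2(t,\bx,0,\rho_v)\leq 0\leq f_2(t,\bx,\rho_u,0).
\end{align*}
Here $f_1(t,\bx,\rho_u,0)\leq 0$ is automatic because $(1-\mu)\leq 0$ already forces the bracket to be nonpositive; the two middle equalities $f_1(t,\bx,0,\rho_v)=0$ and $f_2(t,\bx,\rho_u,0)=0$ hold because the respective prefactor vanishes; and $f_2(t,\bx,0,\rho_v)\leq 0$ is precisely where the lower bound on $\rho_v$ is used, guaranteeing $(1-\nu)-\rho_v/K(\bx)\leq 0$ uniformly in $\bx$.

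With these hypotheses in place, Theorem \ref{th_A.2} yields a unique solution $(u,v)$ of (\ref{equ_1}) that remains in $\mathbf{S}_\rho$ for all $(t,\bx)\in[0,\infty)\times\overline\Omega$, giving global existence and uniqueness. For positivity I would freeze the bounded coefficient $c(t,\bx):=r(\bx)\left[(1-\mu)-(u+v)/K(\bx)\right]$ and read the $u$-equation as the linear parabolic equation $u_t-d_1\Delta u=c(t,\bx)u$ under homogeneous Neumann conditions; the parabolic strong maximum principle then gives $u(t,\bx)>0$ for $t>0$ whenever $u_0\geq 0$, $u_0\not\equiv 0$, and likewise for $v$. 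The step I expect to be the genuine obstacle is not existence/uniqueness but making precise the strict positivity of the \emph{harvested} component: because $\mu\geq 1$ renders the effective growth rate of $u$ nonpositive, $u$ decays in time, so one must rule out finite-time extinction. This is exactly what the strong maximum principle for the linearized equation with bounded coefficients supplies, and it is the crucial point distinguishing this case from the sub-unit harvesting regime of Theorem \ref{lemma_ex5}.
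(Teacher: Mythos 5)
Your proposal is correct and takes essentially the same approach as the paper: the paper's proof likewise abandons the reduction \eqref{equ_2}, works directly with $f_1=r(\bx)u\left(1-\mu-\frac{u+v}{K(\bx)}\right)$ and $f_2=r(\bx)v\left(1-\nu-\frac{u+v}{K(\bx)}\right)$, verifies the same invariant-rectangle inequalities with $\rho_u=\max\{\sup u_0,1\}$ and $\rho_v=\max\{\sup v_0,\sup K\}$, and concludes via Theorem \ref{th_A.2}. Your marginally sharper bound $\rho_v>(1-\nu)\sup K$ and your explicit frozen-coefficient strong-maximum-principle argument for positivity are harmless refinements of what Theorem \ref{th_A.2} already delivers.
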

\begin{proof}
	Rewrite the system (\ref{equ_1}) in the following way, yields
	\begin{equation}\label{equ_3.8}
		\begin{cases}
			& \displaystyle\frac{\partial u}{\partial t}= d_{1}\Delta u(t,\bx)+r(\bx)u(t,\bx)\left(1-\mu-\frac{u(t,\bx)+v(t,\bx)}{K(\bx)}\right),\\
			& \displaystyle\frac{\partial v}{\partial t}=d_{2}\Delta v(t,\bx)+r(\bx)v(t,\bx)\left(1-\nu-\frac{u(t,\bx)+v(t,\bx)}{K(\bx)}\right),\\ 
			& \displaystyle t>0,\;\;\; \bx\in\Omega,\\
			& \displaystyle\frac{\partial u}{\partial \eta}=\frac{\partial v}{\partial \eta}=0,\;\;\; \bx\in \partial\Omega,\\
			& u(0,\bx)=u_0(\bx),\;v(0,\bx)=v_0(\bx),\;\;\; \bx\in\Omega.
		\end{cases}
	\end{equation}
	Utilizing the Theorem \ref{th_A.2} from Appendix \ref{A} and methods which is analogous to the proof of \cite{Ref_66}, to show the existence of nontrivial time-dependent solutions. Let us choose the following constants
	\begin{align*}
	\rho_u = \max \left\{\sup_{\bx\in\Omega} u_0(\bx),1 \right\}, \;\; \rho_v =\max \left\{ \sup_{\bx\in\Omega} v_0(\bx),\; \sup_{\bx\in\Omega} K(\bx)\right\}.
\end{align*}
Note that the chosen of $\rho_u $ and $\rho_v$ analogous with [\cite{Ref_66}, Theorem 2.5]. 
Let us use the notations of Theorem \ref{th_A.2} from Appendix \ref{A} and denote
	\begin{align*}
		& f_1(t,\bx,u,v)=r(\bx)u(t,\bx)\left(1-\mu-\frac{u(t,\bx)+v(t,\bx)}{K(\bx)}\right),\\
		& f_2(t,\bx,u,v)=r(\bx)v(t,\bx)\left(1-\nu-\frac{u(t,\bx)+v(t,\bx)}{K(\bx)}\right).
	\end{align*}
	Then it is simple to examine that the following conditions of the theorem are satisfied
	
	\begin{equation}\label{equ_3.9}
		\begin{cases}
			&	f_1(t,\bx,\rho_u,0)\leq 0\leq f_1(t,\bx,0,\rho_v),\\
			& f_2(t,\bx,0,\rho_v)\leq 0\leq f_2(t,\bx,\rho_u,0).
		\end{cases}
	\end{equation}
	The conditions (\ref{equ_3.9}) satisfy the conditions of Theorem \ref{th_A.2} in Appendix \ref{A} for the functions $f_1$  and $f_2$ defined above. Therefore we arrive at the conclusion of the theorem that nontrivial $(u_0(\bx),v_0(\bx))$ such that 
	\begin{equation}\label{equ_s}
		(u_0,v_0)\in \mathbf{S}_\rho\equiv  \left\{\left(u_1,v_1\right)\in C \left([0,\infty) \times\overline{\Omega}\right)\times C \left([0,\infty) \times\overline{\Omega}\right) ; 0\leq u_1\leq \rho_u,\; 0\leq v_1\leq \rho_v\right\}, 
	\end{equation}
	where $ C \left([0,\infty) \times\overline{\Omega}\right)$  is the class of continuous functions on $[0,\infty) \times\overline{\Omega}$, a unique solution $(u(t,\bx),v(t,\bx))$ for the system (\ref{equ_3.8}) exists and remains in $\mathbf{S}_\rho$ for all $(t,\bx)\in [0,\infty) \times\overline{\Omega}$. 
	Thus, $(u(t,\bx); v(t,\bx))$ is unique and positive solution.
\end{proof}

Let us establish the existence result for (\ref{equ_1}) for $0\leq\mu< 1\leq\nu$. 
\begin{Th}\label{lemma_18.1}
	Assume $0\leq\mu< 1\leq\nu$ and the initial conditions be $u_0,v_0\geq 0$, therefore the model (\ref{equ_1}) has a unique positive time-dependent nontrivial solution.
\end{Th}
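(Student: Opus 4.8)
The plan is to mirror the argument used for Theorem~\ref{lemma_10}, interchanging the roles played by $(u,\mu)$ and $(v,\nu)$, since the hypothesis $0\leq\mu<1\leq\nu$ is exactly the reflection of the case $0\leq\nu<1\leq\mu$ treated there. First I would recast the system (\ref{equ_1}) in the equivalent growth-rate form already displayed in (\ref{equ_3.8}), setting
\begin{align*}
	f_1(t,\bx,u,v) &= r(\bx)u\left(1-\mu-\frac{u+v}{K(\bx)}\right),\\
	f_2(t,\bx,u,v) &= r(\bx)v\left(1-\nu-\frac{u+v}{K(\bx)}\right),
\end{align*}
so that the hypotheses of Theorem~\ref{th_A.2} from Appendix~\ref{A} can be checked directly.

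The decisive step is the choice of the constants $\rho_u,\rho_v$ bounding the invariant rectangle $\mathbf{S}_\rho$ of (\ref{equ_s}). Because now it is $u$ rather than $v$ whose effective growth coefficient $1-\mu$ is strictly positive, the appropriate selection is the mirror image of the one used in Theorem~\ref{lemma_10}, namely
\begin{align*}
	\rho_u = \max\left\{\sup_{\bx\in\Omega} u_0(\bx),\;\sup_{\bx\in\Omega} K(\bx)\right\},\qquad
	\rho_v = \max\left\{\sup_{\bx\in\Omega} v_0(\bx),\;1\right\}.
\end{align*}
I would then verify the four sign conditions (\ref{equ_3.9}). Two of them are immediate: $f_1(t,\bx,0,\rho_v)=0$ and $f_2(t,\bx,\rho_u,0)=0$, since the vanishing leading factor annihilates each expression. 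For $f_1(t,\bx,\rho_u,0)=r(\bx)\rho_u\bigl(1-\mu-\rho_u/K(\bx)\bigr)\leq 0$ it suffices that $\rho_u\geq(1-\mu)K(\bx)$, which holds because $\rho_u\geq\sup_{\bx\in\Omega}K\geq K(\bx)>(1-\mu)K(\bx)$ as $\mu\in[0,1)$. For $f_2(t,\bx,0,\rho_v)=r(\bx)\rho_v\bigl(1-\nu-\rho_v/K(\bx)\bigr)\leq 0$ the factor $1-\nu\leq 0$ (since $\nu\geq 1$) renders the bracket negative for every $\rho_v>0$, so this bound is automatic.

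With (\ref{equ_3.9}) established, Theorem~\ref{th_A.2} applies verbatim and yields a unique solution $(u(t,\bx),v(t,\bx))$ of (\ref{equ_3.8}) --- equivalently of (\ref{equ_1}) --- that remains in $\mathbf{S}_\rho$ for all $(t,\bx)\in[0,\infty)\times\overline\Omega$; positivity and nontriviality for $t>0$ then follow from the strong maximum principle exactly as in the earlier results. The only point requiring genuine care --- and the main obstacle if one tries to reuse the symmetric bound --- is the \emph{asymmetry} of the invariant rectangle: unlike the competitive case of Theorem~\ref{lemma_ex5}, one cannot take both upper bounds proportional to $\sup_{\bx\in\Omega}K$, and one must confirm that the supersolution bound for the heavily harvested species ($\nu\geq 1$) is compatible with an arbitrary $\rho_v>0$, which is precisely what the sign of $1-\nu$ guarantees.
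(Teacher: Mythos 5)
Your proposal is correct and takes essentially the same route as the paper: the paper's own proof of Theorem~\ref{lemma_18.1} is literally ``analogous to Theorem~\ref{lemma_10},'' and you have carried out exactly that mirrored argument, swapping the roles of $(\rho_u,\rho_v)$ so that the species with $1-\nu\leq 0$ gets the automatic bound and the species with $1-\mu>0$ gets the bound $\sup_{\bx\in\Omega}K$, then invoking Theorem~\ref{th_A.2}. One cosmetic slip: for $\mu=0$ your claimed strict inequality $K(\bx)>(1-\mu)K(\bx)$ degenerates to equality, but since condition (\ref{equ_3.9}) only requires $f_1(t,\bx,\rho_u,0)\leq 0$, the non-strict chain $\rho_u\geq K(\bx)\geq(1-\mu)K(\bx)$ suffices and the verification stands.
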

\begin{proof}
	The proof is analogous of Theorem \ref{lemma_10}.	
\end{proof}

	\section{Preliminaries}\label{Preliminaries}
	Let the following problem has stationary solution $u^*(\bx)$ where $v$ is zero in (\ref{equ_2}) 
	\begin{equation}\label{equ_2.1}
	d_{1}\Delta u^{*}(\bx)+r_{1}r(\bx)u^{*}(\bx)\left(1-\frac{u^{*}(\bx)}{K_{1}(\bx)}\right)=0,\;\;\bx\in\Omega,\;\;\; \frac{\partial u^{*}}{\partial \eta}=0,\;\; \bx\in\partial\Omega.
	\end{equation}
	 Analogously, the following problem has stationary solution $v^*(\bx)$ when $u$ is zero in (\ref{equ_2})
	\begin{equation}\label{equ_2.2}
	d_{2}\Delta v^{*}(\bx)+r_{2}r(\bx)v^{*}(\bx)\left(1-\frac{v^{*}(\bx)}{K_{2}(\bx)}\right)=0,\;\;\bx\in\Omega,\;\;\; \frac{\partial v^{*}}{\partial \eta}=0,\;\; \bx\in\partial\Omega.
	\end{equation}
	The following preliminaries results also discussed in \cite{ Ref_12,Ref_1, Ref_13, Ref_14}.
	
	\begin{Lemma}\label{lemma_2} Let $u^{*}$ be a positive solution of (\ref{equ_2.1}) and $v^{*}$ be a positive solution of (\ref{equ_2.2}), let $K_{1}(\bx)$ satisfy  $d_{1}\Delta K_{1}(\bx)\not\equiv 0$  and  $K_{2}(\bx)$ satisfy $d_{2}\Delta K_{2}(\bx)\not\equiv 0$ on $\Omega$. Thus
		\begin{equation}\label{equ_2.9}
		\int_{\Omega}r(\bx)K_{1}(\bx)\;d\bx > \int_{\Omega}r(\bx)u^{*}(\bx)\;d\bx,
		\end{equation}
	and	
		\begin{equation}\label{equ_2.10}
		\int_{\Omega}r(\bx)K_{2}(\bx)\;d\bx > \int_{\Omega}r(\bx)v^{*}(\bx)\;d\bx,
		\end{equation}
		respectively.
	\end{Lemma}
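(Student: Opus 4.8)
The plan is to reduce each inequality to a single scalar balance obtained by integrating the steady-state equation over $\Omega$, and then to compare the two integrals through an elementary pointwise decomposition of $r(K_1-u^*)$; the strict inequality will come from the hypothesis $d_1\Delta K_1\not\equiv 0$.

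First I would integrate (\ref{equ_2.1}) over $\Omega$. The diffusion term contributes $d_1\int_\Omega\Delta u^*\,d\bx=d_1\int_{\partial\Omega}\partial u^*/\partial\eta\,dS=0$ by the divergence theorem and the Neumann condition, so, dividing by $r_1=1-\mu>0$, I am left with the balance relation
\begin{equation*}
\int_\Omega \frac{r(\bx)}{K_1(\bx)}\,u^*(\bx)\bigl(K_1(\bx)-u^*(\bx)\bigr)\,d\bx=0.
\end{equation*}
Next I would exploit the pointwise identity $r(K_1-u^*)=\frac{r}{K_1}(K_1-u^*)^2+\frac{r}{K_1}u^*(K_1-u^*)$. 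Integrating it over $\Omega$ and discarding the last term, which is exactly the balance integral above, gives
\begin{equation*}
\int_\Omega r(\bx)K_1(\bx)\,d\bx-\int_\Omega r(\bx)u^*(\bx)\,d\bx=\int_\Omega \frac{r(\bx)}{K_1(\bx)}\bigl(K_1(\bx)-u^*(\bx)\bigr)^2\,d\bx\ge 0,
\end{equation*}
since $r\ge 0$ and $K_1>0$. This already yields the non-strict form of (\ref{equ_2.9}).

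The only delicate point, and the sole place where $d_1\Delta K_1\not\equiv 0$ is needed, is promoting this to a strict inequality; I would argue by contradiction. If the right-hand integral vanished, then $(K_1-u^*)^2=0$ wherever $r>0$, and since $r$ is positive on $\overline\Omega$ while $u^*,K_1$ are continuous, this forces $u^*\equiv K_1$ on $\Omega$. Substituting $u^*=K_1$ into (\ref{equ_2.1}) annihilates the reaction term and leaves $d_1\Delta K_1=d_1\Delta u^*\equiv 0$, contradicting the hypothesis. Hence the integral is strictly positive and (\ref{equ_2.9}) holds; (\ref{equ_2.10}) follows verbatim with $v^*$, $K_2$, $r_2$, $d_2$ in place of $u^*$, $K_1$, $r_1$, $d_1$, now invoking $d_2\Delta K_2\not\equiv 0$. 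I expect this last step to be the main obstacle, the subtlety being that propagating ``$u^*\equiv K_1$ where $r>0$'' to ``$d_1\Delta K_1\equiv 0$ on $\Omega$'' requires $r$ to be positive on all of $\Omega$ (as assumed in Theorem \ref{lemma_ex5}); if $r$ merely vanishes somewhere, the hypothesis should be read as $d_1\Delta K_1\not\equiv 0$ on the support of $r$.
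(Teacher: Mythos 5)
Your proof is correct and follows essentially the same route as the paper: integrating the steady state over $\Omega$ eliminates the diffusion term via the Neumann condition, and your pointwise identity $r(K_1-u^*)=\frac{r}{K_1}(K_1-u^*)^2+\frac{r}{K_1}u^*(K_1-u^*)$ is precisely the paper's ``add and subtract $K_1$'' decomposition, leading to the same relation $\int_\Omega r(K_1-u^*)\,d\bx=\int_\Omega \frac{r}{K_1}\bigl(K_1-u^*\bigr)^2\,d\bx$. Your explicit contradiction argument for strictness is in fact more careful than the paper, which simply asserts the squared integral is positive without showing where the hypothesis $d_1\Delta K_1\not\equiv 0$ (and the positivity of $r$) is used.
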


\begin{proof} 
	First, we put $v=0$ as well as $u=u^{*}$ in the first equation of (\ref{equ_2}), and utilizing the boundary conditions as well as integrating over $\Omega$, we obtain
	\begin{align}
	& r_{1}\int_{\Omega}ru^{*}\left(1-\dfrac{u^{*}}{K_{1}}\right)d\bx=0.
	\label{lemma-equn-1}
	\end{align}
{Adding and subtracting $K_1$ in equation \eqref{lemma-equn-1}, we have}
\begin{align}
    \int_{\Omega}r\left(u^{*}-K_1+K_1\right)\left(1-\dfrac{u^{*}}{K_{1}}\right)d\bx=0.
\end{align}
{Rewriting}
	\begin{align}
	 \int_{\Omega}rK_1\left(1-\dfrac{u^{*}}{K_{1}}\right)d\bx=\int_{\Omega}rK_1\left(1-\dfrac{u^{*}}{K_{1}}\right)^2d\bx>0,
	\end{align}
 which gives 
\begin{equation}\label{equ_2.11}
\int_{\Omega}rK_{1}\left(1-\dfrac{u^{*}}{K_{1}}\right)d\bx>0.
\end{equation}
{Simplifying  \eqref{equ_2.11}, we obtain} 
\begin{align*}
\int_{\Omega}r(\bx)K_{1}(\bx)d\bx > \int_{\Omega}r(\bx)u^{*}(\bx)d\bx.
\end{align*}
Analogously, the result (\ref{equ_2.10}) is justified.
\end{proof}

\begin{Lemma}\label{lemma_7.11}
		Assume $u^{*}(\bx)$ is a positive solution of (\ref{equ_2.1}). Moreover, if $K_{1}(\bx)\not\equiv const$.
		Then
		\begin{equation}\label{equ_2.12}
		\int_{\Omega}rK_{1}\left(1-\dfrac{u^{*}}{K_{1}}\right)d\bx>0.
		\end{equation}
	\end{Lemma}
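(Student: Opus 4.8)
The plan is to mimic the computation already carried out for (\ref{equ_2.11}) inside the proof of Lemma \ref{lemma_2}, and then to isolate precisely where the hypothesis $K_1\not\equiv\text{const}$ upgrades the resulting inequality from weak to strict. First I would integrate the steady-state equation (\ref{equ_2.1}) over $\Omega$. Because $d_1>0$ is constant and $u^*$ satisfies the homogeneous Neumann condition, the divergence theorem gives $\int_\Omega \Delta u^*\,d\bx=\int_{\partial\Omega}\partial u^*/\partial\eta\,dS=0$, so the diffusion term drops out. Dividing through by $r_1>0$ leaves the single scalar identity
\begin{equation*}
\int_\Omega r\,u^*\left(1-\frac{u^*}{K_1}\right)d\bx=0.
\end{equation*}

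Next I would apply the same add-and-subtract trick used for (\ref{lemma-equn-1}): writing $u^*=K_1+(u^*-K_1)$ in the first factor and using $u^*-K_1=-K_1(1-u^*/K_1)$ turns the identity above into
\begin{equation*}
\int_\Omega r K_1\left(1-\frac{u^*}{K_1}\right)d\bx=\int_\Omega r K_1\left(1-\frac{u^*}{K_1}\right)^{2}d\bx.
\end{equation*}
The integrand on the right is nonnegative, since $r\geq 0$ and $K_1>0$ on $\overline\Omega$, which already yields the weak inequality $\int_\Omega r K_1(1-u^*/K_1)\,d\bx\geq 0$. This reproduces (\ref{equ_2.11}) and is the easy part.

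The crux is to rule out equality, i.e.\ to show the right-hand integral is strictly positive. It vanishes only if $r\,(1-u^*/K_1)^2\equiv 0$, and since $r>0$ on an open nonempty subdomain this forces $u^*\equiv K_1$ there. I would argue that this is incompatible with $K_1\not\equiv\text{const}$: substituting $u^*=K_1$ into (\ref{equ_2.1}) annihilates the reaction term and leaves $d_1\Delta K_1=0$; because $u^*$ (hence $K_1$ on this set) carries the homogeneous Neumann condition, multiplying by $K_1$ and integrating by parts gives $\int_\Omega|\nabla K_1|^2\,d\bx=0$, so $\nabla K_1\equiv 0$ and $K_1$ is constant, a contradiction. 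Hence the square integral is positive and (\ref{equ_2.12}) follows. The step I expect to be the main obstacle is exactly this last one: the vanishing of the square integral only delivers $u^*=K_1$ on the support of $r$, not a priori on all of $\Omega$, so some care — using the equation on the complement, where it reduces to $d_1\Delta u^*=0$, together with unique continuation and the Neumann condition — is needed to run the harmonic-function argument globally and reach the contradiction with $K_1\not\equiv\text{const}$.
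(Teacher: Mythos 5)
Your proposal follows the paper's route exactly: the paper proves this lemma by declaring it ``analogous to Lemma \ref{lemma_2}'', whose proof is precisely your computation --- integrate (\ref{equ_2.1}) over $\Omega$, kill the diffusion term via the Neumann condition, add and subtract $K_1$, and land on the square identity
$\int_\Omega rK_1\left(1-u^*/K_1\right)d\bx=\int_\Omega rK_1\left(1-u^*/K_1\right)^2 d\bx$.
Your closing strictness argument is actually more careful than anything in the paper, which simply asserts the squared integral is positive without ruling out the degenerate case $u^*\equiv K_1$ on $\{r>0\}$. The obstacle you flag at the end resolves more simply than you suggest: once $u^*=K_1$ on $\{r>0\}$, the reaction term $r_1 r u^*(1-u^*/K_1)$ vanishes identically on all of $\Omega$ (it is zero both where $r>0$ and where $r=0$), so $u^*$ is harmonic on the whole domain with zero Neumann data; multiplying by $u^*$ and integrating by parts gives $u^*\equiv c>0$ constant, and then $K_1=c$ wherever $r>0$ --- no unique continuation is needed. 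Note, however, that this contradicts $K_1\not\equiv\mathrm{const}$ only if $r>0$ on (a dense subset of) $\Omega$: under the paper's weakest standing assumption ($r$ positive merely on a nonempty subdomain), one can take $K_1\equiv c$ on $\{r>0\}$, nonconstant elsewhere, and $u^*\equiv c$, making the integral in (\ref{equ_2.12}) zero; so the strict inequality genuinely requires positivity of $r$ throughout, which is the regime in which the paper applies the lemma.
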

	\begin{proof}
		The proof is analogous of Lemma \ref{lemma_2}.
	\end{proof}

	\begin{Lemma}\label{lemma_7.1}
		Assume $v^{*}(\bx)$ is a positive solution of (\ref{equ_2.2}). Moreover, if $K_{2}(\bx)\not\equiv const$.
		 Then
		\begin{equation}\label{equ_2.13}
		\int_{\Omega}rK_{2}\left(1-\dfrac{v^{*}}{K_{2}}\right)d\bx>0.
		\end{equation}
		\end{Lemma}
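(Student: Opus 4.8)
The plan is to follow the template of Lemma \ref{lemma_2}, now applied to the steady-state equation (\ref{equ_2.2}) governing $v^{*}$. First I would integrate (\ref{equ_2.2}) over $\Omega$. By the divergence theorem together with the zero Neumann condition $\partial v^{*}/\partial\eta=0$, the diffusion term $\int_{\Omega}d_{2}\Delta v^{*}\,d\bx$ vanishes, and since $r_{2}=1-\nu>0$, dividing through leaves the identity
\begin{equation*}
\int_{\Omega}r\,v^{*}\left(1-\frac{v^{*}}{K_{2}}\right)d\bx=0.
\end{equation*}

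Next I would perform the same add-and-subtract manipulation used in Lemma \ref{lemma_2}: write $v^{*}=(v^{*}-K_{2})+K_{2}$ inside the integrand and invoke the pointwise identity $(v^{*}-K_{2})\left(1-v^{*}/K_{2}\right)=-K_{2}\left(1-v^{*}/K_{2}\right)^{2}$. Splitting the integral and rearranging then yields
\begin{equation*}
\int_{\Omega}r\,K_{2}\left(1-\frac{v^{*}}{K_{2}}\right)d\bx=\int_{\Omega}r\,K_{2}\left(1-\frac{v^{*}}{K_{2}}\right)^{2}d\bx,
\end{equation*}
so that the quantity in (\ref{equ_2.13}) equals a manifestly nonnegative integral.

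The remaining, and genuinely delicate, step is to upgrade nonnegativity to strict positivity, which is where the hypothesis $K_{2}\not\equiv\mathrm{const}$ enters. Since $r\geq 0$ and $K_{2}>0$ on $\overline{\Omega}$, the right-hand integrand is nonnegative, and it integrates to zero only if $1-v^{*}/K_{2}\equiv 0$ on the (nonempty, open) region where $r>0$; that is, only if $v^{*}\equiv K_{2}$ there. I expect excluding this degenerate case cleanly to be the main obstacle. My plan is to substitute $v^{*}=K_{2}$ back into (\ref{equ_2.2}): the reaction term then vanishes identically, forcing $d_{2}\Delta K_{2}=0$, and the Neumann condition $\partial K_{2}/\partial\eta=0$ together with the maximum principle forces $K_{2}$ to be constant, contradicting $K_{2}\not\equiv\mathrm{const}$. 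Hence the integrand cannot vanish identically, the right-hand integral is strictly positive, and (\ref{equ_2.13}) follows.

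The entire argument is parallel to Lemma \ref{lemma_7.11}, with the roles of $(u^{*},K_{1},r_{1})$ replaced by $(v^{*},K_{2},r_{2})$; the only point requiring care beyond a routine transcription is the localization of the degeneracy to the support of $r$, which is why I would lean on the standing positivity of $r$ (or the stated $K_{2}\not\equiv\mathrm{const}$ hypothesis) to close the strict inequality.
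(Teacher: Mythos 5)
Your proposal is correct and takes essentially the same route as the paper: the paper proves this lemma by declaring it analogous to Lemma \ref{lemma_2}, whose proof is precisely your integrate--then--add-and-subtract computation yielding $\int_{\Omega}rK_{2}\left(1-v^{*}/K_{2}\right)d\bx=\int_{\Omega}rK_{2}\left(1-v^{*}/K_{2}\right)^{2}d\bx$. Your closing maximum-principle argument excluding the degenerate case $v^{*}\equiv K_{2}$ is in fact more careful than the paper, which asserts the strict inequality without justification; your observation that the degeneracy is localized to the set where $r>0$ (so that strictness really uses $K_{2}\not\equiv\mathrm{const}$ together with the standing positivity assumptions on $r$) fills a detail the paper leaves implicit.
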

	\begin{proof}
		The proof is analogous of Lemma \ref{lemma_2}.
	\end{proof}

	\section{Stability analysis of equilibrium points}\label{stability-analysis}
Investigating the consequences of competition of two competitive species, it is crucial to stability analysis of semi-trivial equilibrium namely $(u^*(\bx), 0), \;(0, v^*(\bx))$, trivial solution $(0, 0)$ and nontrivial stationary solution which implies coexistence $(u_s, v_s)$. 
\subsection{When intrinsic growth rate transcending harvesting rate }
The following section organize by the case of intrinsic growth rate transcending harvesting rate such that $\mu,\nu \in [0,1)$. Since $E_{1}(\bx)=\mu r(\bx),\; E_{2}(\bx)=\nu r(\bx)$, which implies if $\mu,\nu \in [0,1)$ then obviously $0\leq E_1(\bx)< r(\bx)$ and $0\leq E_2(\bx)< r(\bx)$. 
In this section, we investigate two possible cases namely when $\mu\leq\nu$ and the other case when $\mu\geq \nu$. 
	
	\begin{Lemma}\label{lemma_3}
	  Assume $\mu,\nu \in [0,1)$ which implies $K_{1}(\bx), K_{2}(\bx), r_{1}>0$, and $r_{2}>0$ be positive on $\overline{\Omega}$. Therefore the trivial steady state $(0,0)$ of the model (\ref{equ_2}) is an unstable repelling equilibrium by the second definition of Theorem \ref{th_A.1} from Appendix \ref{A}.
	\end{Lemma}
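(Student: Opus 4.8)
The plan is to linearize the system (\ref{equ_2}) about the trivial state $(0,0)$ and read off instability from the sign of the principal eigenvalues of the two decoupled linear operators. Dropping the quadratic terms $-r_i r(\bx)(u+v)\,\cdot/K_i$ leaves the linearized system
\begin{align*}
\frac{\partial u}{\partial t} &= d_1\Delta u + r_1 r(\bx)\,u,\\
\frac{\partial v}{\partial t} &= d_2\Delta v + r_2 r(\bx)\,v,
\end{align*}
with homogeneous Neumann boundary conditions. Because the cross terms vanish at the origin, the two equations decouple, so the linear stability of $(0,0)$ is governed independently by the principal eigenvalues $\sigma_1^{(1)}$ and $\sigma_1^{(2)}$ of the eigenvalue problems
\begin{align*}
d_i\Delta \phi + r_i r(\bx)\,\phi = \sigma\phi \ \text{ in }\Omega,\qquad \frac{\partial\phi}{\partial\eta}=0 \ \text{ on }\partial\Omega,\quad i=1,2.
\end{align*}

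Next I would invoke the variational (Rayleigh-quotient) characterization of the principal eigenvalue,
\begin{align*}
\sigma_1^{(i)} = \sup_{\phi\neq 0}\frac{\int_\Omega\bigl(-d_i|\nabla\phi|^2 + r_i r(\bx)\phi^2\bigr)\,d\bx}{\int_\Omega\phi^2\,d\bx},
\end{align*}
and test it with the constant function $\phi\equiv 1$, which satisfies the Neumann condition and has $\nabla\phi=0$. This immediately yields the lower bound
\begin{align*}
\sigma_1^{(i)} \geq \frac{r_i\int_\Omega r(\bx)\,d\bx}{|\Omega|} > 0,
\end{align*}
where strict positivity follows from $r_i=1-\mu>0$ (respectively $1-\nu>0$), together with the standing hypothesis that $r(\bx)\geq 0$ on $\overline{\Omega}$ and $r(\bx)>0$ on an open nonempty subdomain, which forces $\int_\Omega r(\bx)\,d\bx>0$.

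Finally, having established that both principal eigenvalues are strictly positive, I would conclude that $(0,0)$ is linearly unstable: any perturbation with nonzero projection onto the positive principal eigenfunction grows exponentially, and one appeals to the second definition in Theorem \ref{th_A.1} to promote this to the repelling property. The main obstacle is not the instability computation itself but the correct handling of the principal eigenvalue, namely knowing that the self-adjoint Schrödinger-type operators $d_i\Delta + r_i r(\bx)$ under Neumann conditions admit a real principal eigenvalue obeying the variational formula above, and that the sign of this eigenvalue genuinely dictates the local dynamics near the origin. Within that framework the constant-function test exploits the total mass of $r(\bx)$ and settles the sign without any delicate estimate.
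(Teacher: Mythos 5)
Your proposal follows essentially the same route as the paper's proof: linearize at $(0,0)$, exploit the decoupling, and bound the principal eigenvalues from below via the variational (Rayleigh-quotient) characterization with the constant test function $\phi\equiv 1$, with strict positivity coming from $r_1=1-\mu>0$, $r_2=1-\nu>0$ and $\int_\Omega r(\bx)\,d\bx>0$. One caveat: the second definition in Theorem \ref{th_A.1} merely \emph{states} the repelling property rather than providing a mechanism to deduce it from linear instability, so that step still requires a nonlinear small-data growth estimate — the paper handles it identically tersely by citing the proof of [\cite{Ref_5}, Theorem 5], so your argument sits at the same level of completeness as the paper's.
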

    \begin{proof}
    	Let the linearized system (\ref{equ_2}) near the trivial equilibrium
    	\begin{equation}\label{equ_3}
    		\begin{cases}
    			& \displaystyle\dfrac{\partial u}{\partial t}= d_{1}\Delta u(t,\bx)+r_{1}r(\bx)u(t,\bx),\quad t>0,\;\bx\in \Omega,\vspace{-5mm}\\\\
    			& \displaystyle\dfrac{\partial v}{\partial t}=d_{2}\Delta v(t,\bx)+r_2(\bx)v(t,\bx),\quad t>0,\;\bx\in \Omega,\vspace{-5mm}\\\\
    			& \displaystyle\dfrac{\partial u}{\partial \eta}=\dfrac{\partial v}{\partial \eta}=0,\;\;\; \bx\in \partial\Omega,\\
    			& u(0,\bx)=u_0(\bx),\;v(0,\bx)=v_0(\bx),\;\;\; \bx\in\Omega.
    		\end{cases}
    	\end{equation}
    	Corresponding eigenvalue problems are given below
    	\begin{equation}\label{equ_3.111}
    		\begin{cases}
    			& \gamma \psi=d_{1}\Delta \psi+r_{1}r(\bx)\psi,\;\bx\in \Omega,\\	
    			& \sigma \phi= d_{2}\Delta \phi+r_2(\bx)\phi,\;\bx\in \Omega,\\
    			&\dfrac{\partial \psi}{\partial \eta}=\dfrac{\partial \phi}{\partial \eta}=0,\;\; \bx\in \partial\Omega.
    		\end{cases}
    	\end{equation}
    	Using variational characterization of eigenvalues according to \cite{Ref_3}, we obtain the principal eigenvalue by choosing the eigenfunction $\psi=1$ 
    	\begin{align*}
    		&\gamma_{1}\geq \dfrac{1}{|\Omega|} \int_{\Omega}r_{1}r(\bx)d\bx = \dfrac{1}{|\Omega|} \int_{\Omega}(1-\mu)r(\bx)d\bx>0,\;\; \mu\in[0,1).
    	\end{align*}
    	Analogously utilizing the variational characterization of eigenvalues according to \cite{Ref_3}, we obtain the principal eigenvalue by the eigenfunction choosing $\phi=1$ 
    	\begin{align*}
    		&\sigma_{1}\geq \dfrac{1}{|\Omega|} \int_{\Omega}(1-\nu)r(\bx)d\bx>0,\;\; \nu\in[0,1).
    	\end{align*}
    	Thus, the trivial equilibrium $(0,0)$ is unstable. 
    	Now, we prove the trivial steady state $(0,0)$ is repeller. The proof is the same as [\cite{Ref_5}, Theorem 5].
    \end{proof}

The following case demonstrates the result on the outcome of the competition when intrinsic growth rate transcending harvesting rate for $ 0\leq\mu \leq\nu< 1$.
\subsubsection{Case \texorpdfstring{$\mu\leq\nu$}{Lg}}\label{3.1}
The semi-trivial steady state $(u^*, 0)$ is unstable whenever $\mu\leq \nu$, as shown in the following lemma. 
Note that the following proof is analogous with \cite{Ref_12}.

\begin{Lemma}\label{lemma_4}
Let $\mu, \nu \in [0, 1)$ where $\mu \leq \nu$. Thus there exists $\nu_1$  for a certain $\mu$, such that for all $\nu \in [\mu, \nu_1)$, the equilibrium $(u^*, 0)$ is unstable of the system (\ref{equ_2}).
\end{Lemma}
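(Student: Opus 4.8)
The plan is to treat $(u^*,0)$ as the resident state of species $u$ and ask whether the rare competitor $v$ can invade; instability is exactly the assertion that it can. First I would linearize (\ref{equ_2}) about $(u^*,0)$ by writing $u=u^*+\xi$, $v=\zeta$ and discarding the quadratic terms. Because the $v$-equation carries an overall factor $v$, its linearization contains no $\xi$-term, so the linearized operator is block lower-triangular: the $\xi$-block is $L_{11}=d_1\Delta+r_1 r(1-2u^*/K_1)$ and the $\zeta$-block is $L_{22}=d_2\Delta+r_2 r(1-u^*/K_2)$, both with Neumann conditions, and the spectrum is the union of the two blocks' spectra. Since $u^*$ solves $d_1\Delta u^*+r_1 r u^*(1-u^*/K_1)=0$, the principal eigenvalue of $d_1\Delta+r_1r(1-u^*/K_1)$ is $0$; replacing $u^*/K_1$ by $2u^*/K_1$ strictly lowers the potential, so the principal eigenvalue of $L_{11}$ is negative and the $u$-direction is stable, consistent with the global stability in Lemma \ref{lemma_ex2}. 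Hence instability of $(u^*,0)$ is equivalent to positivity of the principal eigenvalue $\sigma_1(\nu)$ of $L_{22}$.

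Next I would record the variational characterization
\[
\sigma_1(\nu)=\sup_{\phi\neq 0}\frac{\int_\Omega\big(-d_2|\nabla\phi|^2+r_2 r(1-u^*/K_2)\phi^2\big)\,d\bx}{\int_\Omega\phi^2\,d\bx},
\]
and observe that, with $r_2=1-\nu$ and $K_2=(1-\nu)K$, the potential equals $(1-\nu)r-ru^*/K$, which is continuous and strictly decreasing in $\nu$. Consequently $\sigma_1$ is continuous and decreasing in $\nu$. The strategy is then: establish $\sigma_1(\mu)>0$, and invoke continuity to produce some $\nu_1>\mu$ with $\sigma_1(\nu)>0$, hence instability, for every $\nu\in[\mu,\nu_1)$.

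The heart of the argument is the evaluation at $\nu=\mu$, where the $v$-potential coincides with the resident potential $r_1 r(1-u^*/K_1)$. Here I would test the Rayleigh quotient with the resident profile itself, $\phi=u^*$. Multiplying the steady-state identity $r_1 r u^*(1-u^*/K_1)=-d_1\Delta u^*$ by $u^*$ and integrating by parts (using the Neumann condition) gives $\int_\Omega r_1 r(1-u^*/K_1)(u^*)^2\,d\bx=d_1\int_\Omega|\nabla u^*|^2\,d\bx$, so that
\[
\sigma_1(\mu)\ \ge\ \frac{(d_1-d_2)\int_\Omega|\nabla u^*|^2\,d\bx}{\int_\Omega(u^*)^2\,d\bx}.
\]
Provided $u^*$ is non-constant, which holds under the spatial heterogeneity of $K$ (the same hypothesis that drives Lemma \ref{lemma_2}), the gradient integral is strictly positive.

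I expect the decisive and most delicate point to be the sign of this quantity: the invasion eigenvalue at $\nu=\mu$ is governed by the diffusion comparison $d_1-d_2$, with the slower-diffusing competitor favored, and the degenerate symmetric case $d_1=d_2$ makes $\sigma_1(\mu)=0$ with $u^*$ the actual eigenfunction. Thus the strict instability claimed here rests on $d_1>d_2$ together with heterogeneity, and this is where care is needed. Once $\sigma_1(\mu)>0$ is secured, the existence of $\nu_1$ and instability throughout $[\mu,\nu_1)$ follow immediately from the continuity and monotonicity of $\sigma_1(\nu)$ noted above.
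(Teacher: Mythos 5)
Your proposal is correct where it is conditional, but it takes a genuinely different --- and substantially more rigorous --- route than the paper. The paper's proof starts from the variational formula (\ref{equ_3.2}), \emph{assumes} that the numerator of the Rayleigh quotient at the principal eigenfunction $\Psi$ is positive (i.e., assumes $\sigma_1>0$, the very instability to be proven), rearranges that inequality into $\nu<\nu_1$, and then \emph{defines} $\nu_1$ by the resulting expression --- an expression built from $\Psi$, which itself depends on $\nu$. Logically this establishes at most the converse implication (``if $(u^*,0)$ is unstable then $\nu<\nu_1(\nu)$'') with a broken quantifier structure; it never exhibits a single value of $\nu$ for which $\sigma_1(\nu)>0$. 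Your scheme --- reduce to the diagonal block $L_{22}$ via the triangular structure of the linearization, observe that the potential $(1-\nu)r-ru^*/K$ is continuous and strictly decreasing in $\nu$ (legitimate because $u^*$ depends only on $\mu$), prove $\sigma_1(\mu)>0$ by testing with $\phi=u^*$ together with the steady-state identity $\int_\Omega r_1r(1-u^*/K_1)(u^*)^2\,d\bx=d_1\int_\Omega|\nabla u^*|^2\,d\bx$, and then propagate positivity to an interval $[\mu,\nu_1)$ by continuity --- is precisely the repair the paper's argument needs, and your monotonicity remark additionally shows the set of destabilizing $\nu$ is an interval, matching the claimed form of the conclusion.

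The delicate point you isolate is real and cuts against the lemma as stated. Your test-function bound gives $\sigma_1(\mu)\ge (d_1-d_2)\int_\Omega|\nabla u^*|^2\,d\bx\big/\int_\Omega(u^*)^2\,d\bx$, with $\sigma_1(\mu)=0$ exactly when $d_1=d_2$ (then $u^*>0$ is itself the principal eigenfunction), and by monotonicity of the Rayleigh quotient in the diffusion coefficient, $d_1<d_2$ forces $\sigma_1(\mu)<0$ whenever $u^*$ is nonconstant. Combined with the strict decrease of $\sigma_1$ in $\nu$, this yields $\sigma_1(\nu)<0$ for all $\nu\in(\mu,1)$ whenever $d_1\le d_2$: the equilibrium $(u^*,0)$ is then linearly \emph{stable}, and no nonempty interval $[\mu,\nu_1)$ of instability exists. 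So the lemma genuinely requires $d_1>d_2$ (plus spatial heterogeneity so that $u^*\not\equiv\mathrm{const}$, as in Lemma \ref{lemma_2}), a hypothesis the paper never imposes --- it defers $\mu=\nu$ to \cite{Ref_1} and, as noted, its argument for $\mu<\nu$ is circular; this is consistent with the classical slower-diffuser-wins phenomenon for \eqref{equ_2} at $\mu=\nu$. Your conditional version is the correct statement; the only small cautions are that strict monotonicity of $\sigma_1$ in $\nu$ and in the potential uses $r>0$ on a set of positive measure (which the standing assumptions provide), and that in the $(\xi,\zeta)$ ordering the coupling term sits in the $\xi$-equation, so the linearization is block upper-triangular --- immaterial to your conclusion that the spectrum is the union of the two diagonal blocks' spectra.
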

\begin{proof}
	The analogous case $\mu = \nu$ was discussed in \cite{Ref_1}, where species have common carrying capacity. Thus, we discuss the case $\mu < \nu$. 
	Linearization of the second equation from (\ref{equ_2}) near the stationary solution $(u^*(\bx), 0)$, we have
	\begin{align*}
	& \dfrac{\partial v(t,\bx)}{\partial t} = d_{2}\Delta v(t,\bx)+r_{2}r(\bx)v(t,\bx)\left(1 -\dfrac{u^*(\bx)}{K_{2}(\bx)}\right),\;\;t>0,\;\bx\in\Omega,\\
	& v(0,\bx)=v_0(\bx),\;\;\; \bx\in\Omega,\; \dfrac{\partial v}{\partial \eta}=0,\;\; \bx\in\partial\Omega.
	\end{align*}
	The corresponding eigenvalue problem is represented as follows
	\begin{equation}\label{equ_3.1}
	\sigma\psi = d_{2}\Delta \psi+r_{2}r(\bx)\psi\left(1 -\frac{u^*(\bx)}{K_{2}(\bx)}\right) ,\;\; \bx\in \Omega,\;\; \dfrac{\partial \psi}{\partial \eta}=0,\;\; \bx\in\partial\Omega.
	\end{equation}
	The principal eigenvalue of this system is given by \cite{Ref_3}
	\begin{equation}\label{equ_3.2}
	\sigma_{1}=\sup_{\psi \neq 0, \psi\in W^{1,2}}\left\{\dfrac{-d_2 \int_{\Omega}|\nabla \psi|^{2}\;d\bx+ \int_{\Omega} r_{2}r(\bx)\psi^{2}\left(1-\dfrac{u^{*}(\bx)}{ K_2(\bx)}\right)\;d\bx}{\int_\Omega\psi^2\;d\bx}\right\}.
	\end{equation}
	We assume that $\Psi$ is the principal eigenfunction for the problem (\ref{equ_3.1}) with the principal eigenvalue $\sigma_1$. 
	This value is positive whenever the numerator of (\ref{equ_3.2}) is positive, leading to
	\begin{align}
	\sigma_{1}=\dfrac{-d_2 \int_{\Omega}|\nabla \Psi|^{2}\;d\bx+ \int_{\Omega} r_{2}r(\bx)\Psi^{2}\left(1-\dfrac{u^{*}(\bx)}{ K_2(\bx)}\right)\;d\bx}{\int_\Omega\Psi^2\;d\bx}.\label{sigma-1}
	\end{align}
{Since the denominator on the right-hand-side of equation \eqref{sigma-1} is positive, to have a positive eigenvalue, we assume}
		\begin{align*}
   	& -d_2 \int_{\Omega}|\nabla \Psi|^{2}\;dx+ \int_{\Omega} r_{2}r(\bx)\Psi^2\left(1-\dfrac{u^{*}(\bx)}{ K_2(\bx)}\right)\;d\bx>0,
   	\end{align*}
   	which gives
   	\begin{align*}
   	 \int_{\Omega} r_{2}r(\bx)\Psi^2\;d\bx> d_2 \int_{\Omega}|\nabla \Psi|^{2}\;d\bx+ \int_{\Omega} r_{2}r(\bx)\Psi^2\dfrac{u^{*}(\bx)}{ K_2(\bx)}\;d\bx.
	\end{align*}
	 Multiplying both sides by $(1-\nu),$ { this reduces to}
	\begin{align*}
 (1-\nu)\int_{\Omega}r_2 r(\bx)\Psi^2\;d\bx> (1-\nu) d_2 \int_{\Omega}|\nabla \Psi|^{2}\;d\bx+ \int_{\Omega}r_2 r(\bx)\Psi^2\dfrac{u^{*}(\bx)}{K(\bx)}\;d\bx,
 \end{align*}
 which gives
 \begin{align*}
	 1-\nu > \dfrac{r_2 d_2 \int_{\Omega}|\nabla \Psi|^{2}\;d\bx+r_2\int_{\Omega} r(\bx)\Psi^2\dfrac{u^{*}(\bx)}{K(\bx)}\;d\bx}{r_2\int_{\Omega} r(\bx)\Psi^2\;d\bx}.
	\end{align*}
	Therefore, we have
	\begin{align*}
	 1-\nu>\dfrac{ d_2 \int_{\Omega}|\nabla \Psi|^{2}\;d\bx+\int_{\Omega} r(\bx)\Psi^{2}\dfrac{u^{*}(\bx)}{K(\bx)}\;d\bx}{\int_{\Omega} r(\bx)\Psi^{2}\;d\bx}.
	\end{align*}
	Here, {in the above inequality,} the left-hand-side $(1-\nu)\in (0,1]$ whenever $\nu\in [0,1)$ and the right-hand-side is positive since numerator and denominator have square term {and} there is no negative term as the parameter $d_2$ is positive. {Therefore}, we can say that the {right-hand-side} of the above inequality belongs to $(0,1)$ since the {right-hand-side} is less than the left-hand-side and positive, where the left-hand-side of the above inequality is $(1-\nu)\in (0,1]$.
	
    Rearrange the above inequality to obtain
    \begin{align*}
    1-\dfrac{ d_2 \int_{\Omega}|\nabla \Psi|^{2}\;d\bx+\int_{\Omega} r(\bx)\Psi^{2}\dfrac{u^{*}(x)}{K(x)}\;d\bx}{\int_{\Omega} r(\bx)\Psi^{2}\;d\bx}>\nu.
    \end{align*}
    Note that, left-hand-side of the above inequality belongs to the $(0,1)$ explanation is given above. We define
\begin{align*}
 \nu_1: =1-\dfrac{ d_2 \int_{\Omega}|\nabla \Psi|^{2}\;d\bx+\int_{\Omega} r(\bx)\Psi^{2}\dfrac{u^{*}(\bx)}{K(\bx)}\;d\bx}{\int_{\Omega} r(\bx)\Psi^{2}\;d\bx},
\end{align*}	
which implies $\nu_1>\nu$ since right-side is greater than $\nu$. Hence, we obtain $\mu\leq\nu<\nu_1$. Therefore, there exists $\nu_1$  for a ﬁxed $\mu$, such that for all $\nu \in [\mu, \nu_1)$, the equilibrium $(u^*, 0)$ is unstable.
\end{proof}

In the following lemma we prove that the steady state $(0,v^*)$ is unstable whenever $\mu\leq \nu$.
\begin{Lemma}\label{lemma_5}
	 Let $\mu, \nu\in[0, 1)$ where $ \mu \leq \nu$ and there exists $\nu_1$  for a certain $\mu$, such that for all $\nu \in [\mu, \nu_1)$. Thus steady state $(0, v^*)$ is unstable of the system (\ref{equ_2}).
\end{Lemma}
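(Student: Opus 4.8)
The plan is to mirror the proof of Lemma \ref{lemma_4}, but now linearising the \emph{first} equation of (\ref{equ_2}) about the semi-trivial state $(0,v^*)$, so that instability of $(0,v^*)$ is detected through the growth of a $u$-perturbation (i.e.\ whether species $u$ can invade the $v$-dominated state). First I would set $u=0$, $v=v^*$ in the $u$-equation and discard the quadratic terms, arriving at a linear Neumann problem with reaction coefficient $r_1 r(\bx)\bigl(1-v^*(\bx)/K_1(\bx)\bigr)$. Its principal eigenvalue $\gamma_1$ then admits the variational characterisation
\[
\gamma_1=\sup_{\psi\neq 0,\ \psi\in W^{1,2}}\frac{-d_1\int_\Omega|\nabla\psi|^2\,d\bx+\int_\Omega r_1 r(\bx)\psi^2\bigl(1-v^*/K_1\bigr)\,d\bx}{\int_\Omega\psi^2\,d\bx},
\]
and it suffices to exhibit $\gamma_1>0$, whence instability of $(0,v^*)$ follows from the criterion invoked in Theorem \ref{th_A.1}.

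The decisive observation is the following pair of pointwise identities, obtained by inserting $K_1=(1-\mu)K$, $r_1=1-\mu$, $K_2=(1-\nu)K$, $r_2=1-\nu$:
\[
r_1\Bigl(1-\frac{v^*}{K_1}\Bigr)=(1-\mu)-\frac{v^*}{K},\qquad r_2\Bigl(1-\frac{v^*}{K_2}\Bigr)=(1-\nu)-\frac{v^*}{K},
\]
so the two reaction coefficients differ by exactly $(\nu-\mu)\geq 0$ precisely because of the hypothesis $\mu\leq\nu$. Since $v^*$ solves (\ref{equ_2.2}), multiplying that equation by $v^*$ and integrating by parts shows that the operator $d_2\Delta+r_2 r(\bx)\bigl(1-v^*/K_2\bigr)$ has principal eigenvalue $0$ with the positive eigenfunction $v^*$; equivalently $-d_2\int_\Omega|\nabla v^*|^2\,d\bx+\int_\Omega r_2 r (v^*)^2\bigl(1-v^*/K_2\bigr)\,d\bx=0$. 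Feeding the admissible test function $\psi=v^*$ (legitimate and nonvanishing since $v^*>0$ on $\overline{\Omega}$ by Lemma \ref{lemma_ex4}) into the Rayleigh quotient and using the identities above to rewrite the $r_1$-coefficient as the $r_2$-coefficient plus $(\nu-\mu)r$, I would obtain
\[
\gamma_1\geq\frac{(d_2-d_1)\int_\Omega|\nabla v^*|^2\,d\bx+(\nu-\mu)\int_\Omega r(\bx)(v^*)^2\,d\bx}{\int_\Omega(v^*)^2\,d\bx},
\]
which is strictly positive, and hence yields instability, as soon as $\mu<\nu$ and $d_1\leq d_2$.

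To match the statement exactly, namely instability for all $\nu\in[\mu,\nu_1)$ with the same threshold produced in Lemma \ref{lemma_4}, I would alternatively follow that lemma's manipulation verbatim: demanding positivity of the numerator at the principal eigenfunction $\Phi$, multiplying through by $(1-\mu)$ and cancelling, reduces the positivity requirement to
\[
\mu<1-\frac{d_1\int_\Omega|\nabla\Phi|^2\,d\bx+\int_\Omega r\Phi^2\,(v^*/K)\,d\bx}{\int_\Omega r\Phi^2\,d\bx}.
\]
Because $\mu\leq\nu$, the hypothesis $\nu<\nu_1$ is then used to place $\mu$ below this threshold, giving $\gamma_1>0$.

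The main obstacle is the interplay of the two distinct diffusion rates $d_1,d_2$: the clean test-function bound yields positivity outright only when $d_1\leq d_2$, and otherwise one needs $(\nu-\mu)\int_\Omega r(v^*)^2\,d\bx$ to dominate the gradient deficit $(d_1-d_2)\int_\Omega|\nabla v^*|^2\,d\bx$. Quantifying exactly this borderline is the role of $\nu_1$, so the care in the argument lies in verifying that, throughout $[\mu,\nu_1)$, the numerator of the Rayleigh quotient stays positive; I would treat $\nu_1$ as the explicit upper bound that guarantees this, exactly paralleling the threshold constructed in Lemma \ref{lemma_4}.
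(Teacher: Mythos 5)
Your Rayleigh-quotient computation with the test function $\psi=v^*$ is correct as far as it goes: the steady-state identity $-d_2\int_\Omega|\nabla v^*|^2\,d\bx+\int_\Omega r_2\,r\,(v^*)^2\bigl(1-v^*/K_2\bigr)\,d\bx=0$ and the pointwise identity $r_1\bigl(1-v^*/K_1\bigr)=r_2\bigl(1-v^*/K_2\bigr)+(\nu-\mu)$ do yield $\gamma_1\geq\bigl[(d_2-d_1)\int_\Omega|\nabla v^*|^2\,d\bx+(\nu-\mu)\int_\Omega r\,(v^*)^2\,d\bx\bigr]/\int_\Omega(v^*)^2\,d\bx$. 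But this proves a strictly weaker statement than Lemma \ref{lemma_5}, which imposes no relation between $d_1$ and $d_2$: your bound gives instability only when $d_1\leq d_2$ or when $\nu-\mu$ dominates the gradient deficit, and at $\mu=\nu$ it gives nothing unless $d_2>d_1$. The attempted repair in your final paragraphs is a non sequitur: the threshold you derive there, $1-\bigl(d_1\int_\Omega|\nabla\Phi|^2\,d\bx+\int_\Omega r\Phi^2\,(v^*/K)\,d\bx\bigr)/\int_\Omega r\Phi^2\,d\bx$, is built from the principal eigenfunction $\Phi$ of the linearization at $(0,v^*)$ and from $v^*$, whereas $\nu_1$ in Lemma \ref{lemma_4} is built from the eigenfunction $\Psi$ of the linearization at $(u^*,0)$ and from $u^*$. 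Nothing connects the hypothesis $\nu\in[\mu,\nu_1)$ to $\mu$ lying below your new threshold, so the claim that $\nu<\nu_1$ ``places $\mu$ below this threshold'' is unsupported; quantifying your $d_1>d_2$ borderline is emphatically \emph{not} what $\nu_1$ does in the paper.

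The paper's mechanism is different and, crucially, independent of the diffusion rates. It tests the quotient (\ref{equ_3.4}) with $\psi=\sqrt{K_2}$, rewrites $K_1=cK_2$ with $c:=(1-\mu)/(1-\nu)\geq 1$ (your pointwise identity in multiplicative disguise), and then invokes Lemma \ref{lemma_7.1}, namely $\int_\Omega rK_2\bigl(1-v^*/K_2\bigr)\,d\bx>0$, to conclude $\int_\Omega rK_2\bigl(1-v^*/(cK_2)\bigr)\,d\bx>0$ since $c>1$ only enlarges the integrand; positivity of $\sigma_1$ then follows with no comparison of $d_1$ and $d_2$ ever needed, and the case $\mu=\nu$ is disposed of by citation to \cite{Ref_1} rather than argued. (The paper's estimate does silently discard the term $-d_1\int_\Omega|\nabla\sqrt{K_2}|^2\,d\bx$, so it is not beyond reproach either.) The essential ingredient your proposal lacks is Lemma \ref{lemma_7.1} — the integral inequality comparing $v^*$ with $K_2$ — which is what supplies positivity uniformly in $d_1,d_2$; your steady-state identity for $v^*$ cannot substitute for it when $d_1>d_2$.
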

\begin{proof}
	 The case $\mu = \nu$ was discussed in \cite{Ref_1}. 
	 Thus, here we only discuss the case $\mu < \nu$. Linearization of the first equation of (\ref{equ_2}) in the neighborhood of $(0, v^*)$ by the following way
	 	\begin{align*}
	 & \frac{\partial u(t,\bx)}{\partial t} = d_{1}\Delta u(t,\bx)+r_{1}r(\bx)u(t,\bx)\left(1 -\frac{v^*(\bx)}{K_{1}(\bx)}\right),\;\;\bx\in\Omega,\\
	 & u(0,\bx)=u_0(\bx),\;\;\; \bx\in\Omega,\; \frac{\partial u}{\partial \eta}=0,\;\; \bx\in\partial\Omega.
	 \end{align*}
	 The corresponding eigenvalue problem be
	 \begin{equation}\label{equ_3.3}
	 \sigma\psi = d_{1}\Delta \psi+r_{1}r(\bx)\psi\left(1 -\dfrac{v^*(\bx)}{K_{1}(\bx)}\right) ,\;\; \bx\in \Omega,\;\; \dfrac{\partial \psi}{\partial \eta}=0,\;\; \bx\in\partial\Omega.
	 \end{equation}
	 The principal eigenvalue of this problem is given by \cite{Ref_3}
	 \begin{equation}\label{equ_3.4}
	 \sigma_{1}=\sup_{\psi \neq 0, \psi\in W^{1,2}}\left\{\dfrac{-d_1 \int_{\Omega}|\nabla \psi|^{2}\;d\bx+ \int_{\Omega} r_{1}r(\bx)\psi^{2}\left(1-\dfrac{v^{*}(\bx)}{ K_1(\bx)}\right)\;d\bx}{\int_\Omega\psi^2\;d\bx}\right\}.
	 \end{equation}
	 For $\nu\in[0,1)$, take into account the eigenfunction $\psi(\bx) =\sqrt{(1-\nu)K(\bx)}=\sqrt{K_2(\bx)}$. Note that analogous eigenfunction used in [\cite{Ref_12}, Lemma 7]. Then the principle eigenvalue becomes 
	 \begin{align*}
	 \sigma_{1}\geq \dfrac{ \int_{\Omega} r_{1}r(\bx)K_2(\bx)\left(1-\dfrac{v^{*}(\bx)}{ K_1(\bx)}\right)\;d\bx}{\int_\Omega K_2(\bx) \;d\bx}.
	 	 \end{align*}
	 Note that $\nu \in [\mu,\nu_1)$, $\nu_1$ is defined in Lemma \ref{lemma_4}. We introduce a constant $c:=\dfrac{1-\mu}{1-\nu}>1$ as long as $\mu<\nu$, further it is true for every $0\leq \mu \leq \nu <1$, then it is definitely true for $0\leq \mu \leq \nu <\nu_1 <1$ due to $\nu_1 \in [0,1)$ which means $\mu, \nu,\nu_1 \in [0,1)$, it implies $c>1$ for any values in $[0,1)$. 
	 
	 Now estimate the principal eigenvalue by the following way
	 \begin{align*}
	 \sigma_{1} \geq \dfrac{ \int_{\Omega} r_{1}r(\bx)K_2(\bx)\left(1-\dfrac{v^{*}(\bx)}{(1-\mu) K(\bx)}\right)\;d\bx}{\int_\Omega K_2(\bx)\;d\bx},
	 \end{align*}
	 {which can be rewritten as}
	 \begin{align*}
	 \sigma_{1}\geq \dfrac{ \int_{\Omega}r_{1} r(\bx)K_2(\bx)\left(1-\frac{v^{*}(\bx)}{\frac{(1-\mu)}{(1-\nu)}(1-\nu)K}\right)\;d\bx}{\int_\Omega K_2(\bx)\;d\bx}.
	 \end{align*}
{Introducing the constant $c$, we have}
	 
 \begin{equation}\label{equ_4.5}
 	\sigma_{1}\geq \dfrac{ r_1\int_{\Omega} r(\bx)K_2(\bx)\left(1-\dfrac{v^{*}(\bx)}{c K_2(\bx)}\right)\;d\bx}{\int_\Omega K_2(\bx)\;d\bx}.
 \end{equation}
	 We want to show the numerator of the {right-hand-side fraction in (\ref{equ_4.5}) is} positive. From Lemma \ref{lemma_7.1}, we have 
	 \begin{align*}
	 	\int_{\Omega} r(\bx)K_2(\bx) \left(1-\dfrac{v^{*}(\bx)}{K_2(\bx)}\right) d\bx>0.
	 \end{align*}
 Note that since $c>1$, we obtain
 \begin{align*}
 	\int_{\Omega} r(\bx) K_2(\bx) \left(1-\dfrac{v^{*}(\bx)}{cK_2(\bx)}\right) d\bx> 
 		\int_{\Omega} r(\bx) K_2(\bx) \left(1-\dfrac{v^{*}(\bx)}{K_2(\bx)}\right) d\bx>0.
 \end{align*}
Therefore, numerator of the inequality (\ref{equ_4.5}) is positive. Thus, the principal eigenvalue is positive
 \begin{align*}
 	\sigma_{1}\geq \dfrac{ r_1\int_{\Omega} r(\bx)K_2(\bx)\left(1-\dfrac{v^{*}(\bx)}{c K_2(\bx)}\right)\;d\bx}{\int_\Omega K_2(\bx)\;d\bx}>0.
 \end{align*}
which completes the proof. 
\end{proof}
In the following theorem we prove that the equilibrium $(u_s,v_s)$ is globally stable for the system (\ref{equ_2}) whenever $\mu\leq \nu$ using Lemma \ref{lemma_3}, Lemma \ref{lemma_4}, and Lemma \ref{lemma_5}.

\begin{Th}\label{th_1}
	Let $\mu, \nu \in [0, 1)$ where $\mu \leq \nu$. Thus there exists $\nu_1$  for a certain $\mu$, such that for all $\nu \in [\mu, \nu_1)$, the equilibrium $(u_s,v_s)$ of the system (\ref{equ_2}) is globally stable.
\end{Th}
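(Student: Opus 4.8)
The plan is to read (\ref{equ_2}) as a strongly monotone competitive semiflow and to deduce global stability of the coexistence state from the fact, already established in Lemmas \ref{lemma_3}--\ref{lemma_5}, that the trivial and both semi-trivial equilibria repel. First I would fix the competitive ordering on the state space $X=C(\overline\Omega)\times C(\overline\Omega)$: declare $(u_1,v_1)\preceq(u_2,v_2)$ whenever $u_1\le u_2$ and $v_1\ge v_2$ pointwise on $\overline\Omega$. Because each species inhibits the other --- the cross-derivatives $\partial f_1/\partial v=-r_1 r u/K_1$ and $\partial f_2/\partial u=-r_2 r v/K_2$ are nonpositive on the positive cone --- the solution semiflow $\Phi_t$ generated by (\ref{equ_2}) is order preserving for $\preceq$, and parabolic smoothing together with the strong maximum principle upgrades this to strong monotonicity on the interior of the cone.

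Next I would collect the ingredients already in hand. Theorem \ref{lemma_ex5} supplies global existence, uniqueness, and positivity and confines every positive orbit to the bounded set $\mathbf{S}_\rho$, so the semiflow is point dissipative; compactness of the parabolic solution operator then yields a compact global attractor. Lemma \ref{lemma_3} makes $(0,0)$ a repeller, and for $\nu\in[\mu,\nu_1)$ Lemmas \ref{lemma_4} and \ref{lemma_5} make both semi-trivial equilibria $(u^*,0)$ and $(0,v^*)$ linearly unstable, i.e.\ each monoculture can be invaded by the absent species. In the competitive order these two equilibria are precisely the endpoints of the order interval $I=\{(u,v):0\le u\le u^*,\ 0\le v\le v^*\}$, and every interior positive orbit is eventually trapped in $I$. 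The abstract theory of strongly monotone competitive systems (of Hsu--Smith--Waltman / Dancer--Hess type, in the spirit of the analysis in \cite{Ref_12}) then guarantees, from the instability of both endpoints, the existence of at least one order-stable coexistence equilibrium $(u_s,v_s)$ with $u_s,v_s>0$ in the interior of $I$.

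Finally I would promote this to global convergence. For a strongly monotone semiflow the $\omega$-limit set of any interior orbit is an ordered, invariant subset of $I$; since the two endpoints repel while $(u_s,v_s)$ is order-stable, the limit-set dichotomy forces every interior orbit to converge to a coexistence equilibrium. The stated global stability therefore reduces to showing that $(u_s,v_s)$ is the only interior equilibrium. Here I would use the special structure $K_1=(1-\mu)K\ge(1-\nu)K=K_2$, equivalently $c=(1-\mu)/(1-\nu)\ge1$, which makes $u$ the stronger competitor and was exactly the inequality driving Lemmas \ref{lemma_4}--\ref{lemma_5}; a monotone-iteration/sweeping argument between the sub- and supersolutions built from $(0,v^*)$ and $(u^*,0)$ should pin down a unique positive steady state. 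The main obstacle is this uniqueness step: existence of the coexistence state and the repelling of the boundary follow cleanly from the lemmas, but excluding a second interior equilibrium --- and with it any nonconvergent or oscillatory behavior --- is where the genuine work lies, and it is precisely where the hypothesis $\mu\le\nu$ together with the common-carrying-capacity structure $K_i=(1-\cdot)K$ must be used decisively.
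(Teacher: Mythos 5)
Your proposal follows essentially the same route as the paper: instability of $(0,0)$, $(u^*,0)$, and $(0,v^*)$ (Lemmas \ref{lemma_3}--\ref{lemma_5}) is fed into the Hsu--Smith--Waltman competitive trichotomy (Theorem \ref{th_A.1}), whose two extinction alternatives are thereby excluded, leaving the coexistence case. The one point where you go beyond the paper is in flagging that the trichotomy by itself only yields \emph{existence} of a positive equilibrium, so global stability still requires uniqueness of the interior steady state; you are right that this is the genuine remaining work, and it is worth noting that the paper's own proof does not supply it either --- it simply asserts a ``globally stable coexistence solution'' from case (a) of Theorem \ref{th_A.1}, so your honest identification of the uniqueness gap is a strength rather than a defect relative to the published argument.
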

\begin{proof}
{We consider} $0\leq \mu \leq\nu < 1$. Lemma \ref{lemma_4} demonstrates that there is a number $\nu_1\in[0,1)$ such that whenever $\nu\in [\mu,\nu_1)$ the steady state $(u^*,0)$ is unstable. At the same time, Lemma \ref{lemma_5} illustrates that the steady state $(0,v^*)$ is unstable. Lemma \ref{lemma_3} demonstrates that the trivial steady state $(0,0)$ is unstable, moreover repeller. We extract two options of Theorem \ref{th_A.1} in Appendix \ref{A}. Hence, there exists a globally stable coexistence solution, which confirms the first statement of Theorem \ref{th_A.1} from Appendix \ref{A}. 
\end{proof}

The next case demonstrates the result on the outcome of the competition when growth function exceeding harvesting for $ 0\leq\nu \leq\mu< 1$.

\subsubsection{Case \texorpdfstring{$\mu \geq \nu$}{Lg}}\label{3.2}
This subsection contains lemmata that are symmetrical are proved in Subsection \ref{3.1}. Hence, we ignore the proofs and instead we mention the corresponding lemmata in Subsection \ref{3.1}. In the following lemma we prove that the steady state $(0,v^*)$ is unstable whenever $\mu\geq \nu$.
\begin{Lemma}\label{lemma_7}
	Let $\mu \geq \nu$, where $\mu, \nu\in [0,1)$. There exists a value $\mu_1$  for a certain $\nu$, such that for all $\mu \in [\nu, \mu_1)$, the steady state $(0,v^*(\bx))$ of the model (\ref{equ_2}) is unstable.
	\end{Lemma}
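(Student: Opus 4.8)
The plan is to mirror the variational argument of Lemma~\ref{lemma_4} with the two species interchanged: every quantity carrying subscript $2$ and the parameter $\nu$ there becomes subscript $1$ and $\mu$ here, the semi-trivial state $(u^*,0)$ becomes $(0,v^*)$, and the linearization is performed on the first equation of (\ref{equ_2}) rather than the second. Since the borderline case $\mu=\nu$ (equal harvesting, hence a common effective carrying capacity) is already treated in \cite{Ref_1}, I would restrict attention to the strict case $\nu<\mu$.

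First, I would linearize the first equation of (\ref{equ_2}) about $(0,v^*(\bx))$, obtaining the perturbation equation
\[
\frac{\partial u}{\partial t}=d_1\Delta u+r_1 r(\bx)u\left(1-\frac{v^*(\bx)}{K_1(\bx)}\right),\qquad \frac{\partial u}{\partial\eta}=0,
\]
whose linear stability is governed by the principal eigenvalue of
\[
\sigma\psi=d_1\Delta\psi+r_1 r(\bx)\psi\left(1-\frac{v^*(\bx)}{K_1(\bx)}\right),\qquad \frac{\partial\psi}{\partial\eta}=0 .
\]
Instability of $(0,v^*)$ amounts to $\sigma_1>0$, and by the variational characterization from \cite{Ref_3},
\[
\sigma_1=\sup_{\psi\neq 0,\ \psi\in W^{1,2}}\frac{-d_1\int_\Omega|\nabla\psi|^2\,d\bx+\int_\Omega r_1 r(\bx)\psi^2\left(1-\frac{v^*(\bx)}{K_1(\bx)}\right)d\bx}{\int_\Omega\psi^2\,d\bx}.
\]

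Taking $\Psi$ to be the principal eigenfunction and requiring the numerator to be positive, I would rearrange the resulting inequality, multiply through by $(1-\mu)$, and use $r_1=1-\mu$ together with $K_1=(1-\mu)K$ to absorb the proportionality factors, exactly as in Lemma~\ref{lemma_4}. This produces the threshold
\[
\mu_1:=1-\frac{d_1\int_\Omega|\nabla\Psi|^2\,d\bx+\int_\Omega r(\bx)\Psi^2\frac{v^*(\bx)}{K(\bx)}\,d\bx}{\int_\Omega r(\bx)\Psi^2\,d\bx},
\]
and the positivity of $\sigma_1$ becomes equivalent to $\mu<\mu_1$. The subtracted fraction is strictly positive (all its terms are nonnegative and $d_1>0$) and is bounded above by $1-\mu\le 1$, so that $\mu_1\in(0,1)$ and the interval $[\nu,\mu_1)$ is nonempty; for every $\mu$ in that interval the steady state $(0,v^*)$ is unstable, which is the assertion.

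The main point requiring care is the implicit nature of $\mu_1$: the threshold is expressed through the principal eigenfunction $\Psi$, which itself depends on the data and, via $K_1$, on $\mu$. As in Lemma~\ref{lemma_4}, the correct reading is that $\nu$ is held fixed while one identifies the range of the free parameter $\mu$ for which the Rayleigh quotient stays positive. I would therefore make explicit that the quantity defining $\mu_1$ is positive and finite, so that the claimed instability interval $[\nu,\mu_1)$ is genuinely nonempty, and that positivity of the numerator is precisely the inequality $\mu<\mu_1$, thereby closing the argument.
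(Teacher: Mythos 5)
Your proposal is correct and takes essentially the same route as the paper: the paper's proof of Lemma~\ref{lemma_7} consists of the single remark that it is analogous to Lemma~\ref{lemma_4}, and you have carried out exactly that role-swapped argument (linearizing the first equation of (\ref{equ_2}) about $(0,v^*)$, using the variational characterization of the principal eigenvalue, and extracting the threshold $\mu_1$ by the same multiplication-by-$(1-\mu)$ rearrangement). Your caveat about $\mu_1$ being implicit through the principal eigenfunction $\Psi$ applies equally to the paper's own Lemma~\ref{lemma_4}, so your treatment is, if anything, slightly more careful than the original.
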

\begin{proof}
 The proof is analogous to Lemma \ref{lemma_4}. Basically, Lemma \ref{lemma_7} demonstrates that there is a range of values for $\mu$ when $\nu$ is fixed and such that $\mu \geq \nu$, where $(0, v^*(\bx))$ is unstable. 
\end{proof}

The following lemma proves that the steady state $(u^*,0)$ is unstable whenever $\mu\geq \nu$.
\begin{Lemma}\label{lemma_8}
	Let $\mu \geq \nu$, where $\mu, \nu\in [0,1)$, thus there exists a value $\mu_1$  for a certain $\nu$, for all $\mu \in [\nu, \mu_1)$. Thus the steady state $(u^*(\bx), 0)$ of the system (\ref{equ_2}) is unstable.
\end{Lemma}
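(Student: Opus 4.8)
The plan is to prove Lemma \ref{lemma_8} by exploiting the symmetry with Lemma \ref{lemma_5}, exactly as the authors indicate. Lemma \ref{lemma_5} showed $(0,v^*)$ is unstable when $\mu\leq\nu$ by linearizing the first equation, reducing to a principal eigenvalue $\sigma_1$, and testing with the eigenfunction $\psi=\sqrt{K_2(\bx)}$ together with the constant $c=\frac{1-\mu}{1-\nu}>1$. Here I want the mirror statement: instability of $(u^*,0)$ under the reversed ordering $\mu\geq\nu$. So first I would linearize the \emph{second} equation of (\ref{equ_2}) about the semi-trivial state $(u^*,0)$, obtaining the same perturbation problem as in Lemma \ref{lemma_4}, namely
\begin{equation*}
\sigma\psi = d_{2}\Delta \psi+r_{2}r(\bx)\psi\left(1 -\frac{u^*(\bx)}{K_{2}(\bx)}\right),\;\; \bx\in \Omega,\;\; \frac{\partial \psi}{\partial \eta}=0,\;\; \bx\in\partial\Omega,
\end{equation*}
whose principal eigenvalue $\sigma_1$ admits the variational characterization in (\ref{equ_3.2}).

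Next I would make the role-swapping substitution precise. In Lemma \ref{lemma_5} the index $1$ (the $u$-equation, coefficients $r_1,K_1,d_1$) was the linearized equation and index $2$ (with $v^*$) supplied the established steady state; here the roles of the two species are interchanged, so I would test with the eigenfunction $\psi(\bx)=\sqrt{K_1(\bx)}=\sqrt{(1-\mu)K(\bx)}$ and introduce the reciprocal constant $c:=\frac{1-\nu}{1-\mu}>1$, which is $>1$ precisely because now $\mu\geq\nu$ (and strictly $\mu>\nu$; the boundary case $\mu=\nu$ is already covered by \cite{Ref_1} as in Lemma \ref{lemma_4}). Substituting $\psi=\sqrt{K_1}$ collapses the gradient penalty against the reaction term and, after rewriting $u^*/K_2=u^*/\big(c\,K_1\big)$, yields a lower bound of the form
\begin{equation*}
\sigma_{1}\geq \frac{ r_2\int_{\Omega} r(\bx)K_1(\bx)\left(1-\dfrac{u^{*}(\bx)}{c K_1(\bx)}\right)\;d\bx}{\int_\Omega K_1(\bx)\;d\bx}.
\end{equation*}

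To force positivity of the numerator I would invoke Lemma \ref{lemma_7.11} (the $u^*$-analogue of Lemma \ref{lemma_7.1}), which gives $\int_{\Omega} r K_1\big(1-u^*/K_1\big)\,d\bx>0$; since $c>1$ enlarges each integrand pointwise, the weaker-penalized integral is even larger, so the numerator is positive and hence $\sigma_1>0$, establishing instability of $(u^*,0)$. Finally I would record the $\mu_1$-threshold exactly as in Lemma \ref{lemma_4}: defining $\mu_1:=1-\dfrac{d_2\int_\Omega|\nabla\Psi|^2\,d\bx+\int_\Omega r(\bx)\Psi^2\,\frac{u^*(\bx)}{K(\bx)}\,d\bx}{\int_\Omega r(\bx)\Psi^2\,d\bx}\in(0,1)$ with $\Psi$ the principal eigenfunction, so that positivity of $\sigma_1$ holds for all $\mu\in[\nu,\mu_1)$.

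**The main obstacle** I anticipate is purely bookkeeping rather than conceptual: the two lemmata in Subsection \ref{3.1} do \emph{not} play perfectly symmetric roles — Lemma \ref{lemma_4} uses the variational formula with a generic principal eigenfunction to \emph{define} the threshold $\nu_1$, whereas Lemma \ref{lemma_5} uses the explicit test function $\sqrt{K_2}$ to certify a \emph{sign}. The present statement asks for the $\sqrt{K_1}$-style sign argument but phrases its conclusion in terms of a threshold $\mu_1$, so I must be careful to produce \emph{both} the test-function positivity bound and the threshold definition consistently, and to confirm that the constant $c=\frac{1-\nu}{1-\mu}$ points the inequality the correct way under $\mu\geq\nu$ (the reciprocal of the Lemma \ref{lemma_5} constant). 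Getting these index swaps $1\leftrightarrow2$, $\mu\leftrightarrow\nu$, and $u^*\leftrightarrow v^*$ uniformly correct is where an error would most likely creep in.
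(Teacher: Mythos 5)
Your core argument is exactly the paper's intended proof: the paper disposes of Lemma \ref{lemma_8} in one line (``analogous of Lemma \ref{lemma_5}''), and your expansion of that analogy is the correct one --- linearize the second equation of (\ref{equ_2}) at $(u^*,0)$, test the Rayleigh quotient with $\psi=\sqrt{K_1}$, rewrite $u^*/K_2=u^*/(c\,K_1)$ with $c=\frac{1-\nu}{1-\mu}\geq 1$ (the right reciprocal of the Lemma \ref{lemma_5} constant under $\mu\geq\nu$), and get positivity of the numerator from Lemma \ref{lemma_7.11} since $c>1$ only enlarges the integrand pointwise; deferring the boundary case $\mu=\nu$ to \cite{Ref_1} also matches the paper's handling. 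Your phrase that $\sqrt{K_1}$ ``collapses the gradient penalty'' glosses the term $-d_2\int_\Omega\bigl|\nabla\sqrt{K_1}\bigr|^2\,d\bx$ in exactly the way the paper's own Lemma \ref{lemma_5} does (it silently drops the corresponding $-d_1$ term after choosing $\psi=\sqrt{K_2}$), so on that point you are no less rigorous than the source.

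The one genuine slip is your threshold. The formula you propose, $\mu_1:=1-\frac{d_2\int_\Omega|\nabla\Psi|^{2}\,d\bx+\int_\Omega r(\bx)\Psi^{2}\,(u^{*}/K)\,d\bx}{\int_\Omega r(\bx)\Psi^{2}\,d\bx}$, is the Lemma \ref{lemma_4} quotient verbatim, and that quotient bounds $\nu$, not $\mu$: in the eigenvalue problem for the $v$-linearization at $(u^*,0)$ one has $r_2=1-\nu$ and $K_2=(1-\nu)K$, so the factors of $1-\nu$ cancel and positivity of the numerator reads $1-\nu>(\cdot)$, i.e.\ $\nu<\nu_1$; it carries no direct constraint on $\mu$ (which enters only implicitly through $u^*$). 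In the paper's bookkeeping the $\mu_1$ in the statement of Lemma \ref{lemma_8} is not re-derived there at all: it is produced by Lemma \ref{lemma_7} from the mirror linearization of the \emph{first} equation at $(0,v^*)$, hence with $d_1$ and $v^*$ in place of your $d_2$ and $u^*$, and is quoted in Lemma \ref{lemma_8} only to phrase the common parameter range needed for Theorem \ref{th_3}. Since your test-function computation already yields $\sigma_1>0$ for every $\nu\leq\mu<1$, no threshold is needed for the instability of $(u^*,0)$ itself; deleting your $\mu_1$ definition (or replacing it by the Lemma \ref{lemma_7} one) makes your proof coincide with the paper's.
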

\begin{proof}
	The proof is analogous of Lemma \ref{lemma_5}. This, Lemma \ref{lemma_8} represents the steady state $(u^*,0)$ is unstable whenever $\mu \geq \nu$.
\end{proof}

In the following theorem we prove that the coexistence solution $(u_s,v_s)$ of the system (\ref{equ_2}) is globally stable whenever $\mu\geq \nu$ using Lemma \ref{lemma_3}, Lemma \ref{lemma_7}, and Lemma \ref{lemma_8}.

\begin{Th}\label{th_3}
Let $\mu \geq \nu$, where $\mu, \nu\in [0,1)$. Thus there exists a value $\mu_1$  for a certain $\nu$, for all $\mu \in [\nu, \mu_1)$, the coexistence steady state of the system (\ref{equ_2}) is a globally stable.
\end{Th}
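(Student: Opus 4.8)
The plan is to mirror the argument used for Theorem \ref{th_1} verbatim, interchanging the roles of the two species and invoking the symmetric instability lemmata of Subsection \ref{3.2}. First I would fix $\nu\in[0,1)$ and restrict $\mu$ to the interval $[\nu,\mu_1)$, where $\mu_1$ is the threshold produced by the symmetric analysis. The objective is to establish that all three ``boundary'' equilibria of (\ref{equ_2}) --- the trivial state $(0,0)$ together with the two semi-trivial states $(u^*,0)$ and $(0,v^*)$ --- are unstable, so that the abstract competitive-systems dichotomy Theorem \ref{th_A.1} forces a globally stable coexistence state to exist.

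Concretely, the first step is to record that Lemma \ref{lemma_3} already gives that $(0,0)$ is unstable and, moreover, a repeller; since that lemma was stated for all $\mu,\nu\in[0,1)$, it applies unchanged here. The second step is to invoke Lemma \ref{lemma_7}, which asserts that for $\mu\in[\nu,\mu_1)$ the semi-trivial equilibrium $(0,v^*)$ is unstable. The third step is to invoke Lemma \ref{lemma_8}, which gives, on the same range $\mu\in[\nu,\mu_1)$, that the semi-trivial equilibrium $(u^*,0)$ is likewise unstable. As in the proof of Theorem \ref{th_1}, the single threshold $\mu_1$ (for fixed $\nu$) governs both semi-trivial instabilities, so no additional bookkeeping between two distinct thresholds is required.

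With the instability of all three boundary equilibria in hand, the final step is to apply Theorem \ref{th_A.1} from Appendix \ref{A}. The system (\ref{equ_2}) is a two-species competition--diffusion (Lotka--Volterra) system and hence generates a strongly order-preserving competitive semiflow, so the alternatives of Theorem \ref{th_A.1} are available: when neither species can be competitively excluded, that is, when each semi-trivial steady state is unstable and the origin repels, the first alternative of the theorem holds and yields a coexistence steady state $(u_s,v_s)$ that is globally asymptotically stable. This is precisely the assertion of Theorem \ref{th_3}.

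I expect the real weight of the proof to sit entirely in verifying that the hypotheses of the abstract result Theorem \ref{th_A.1} are genuinely satisfied by the full nonlinear system, rather than merely by its linearizations: one must know that the competitive/monotone structure needed for the trichotomy is in force globally, and one should make explicit that the thresholds of Lemmas \ref{lemma_7} and \ref{lemma_8} are taken compatibly (replacing $\mu_1$ by the smaller of the two if they were obtained by separate estimates). Beyond this, the argument is a faithful transcription of the $\mu\le\nu$ case with $u$ and $v$ interchanged, which is why the intervening symmetric lemmata were stated without repeating their proofs.
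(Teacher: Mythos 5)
Your proposal matches the paper's proof essentially verbatim: the paper likewise combines Lemma \ref{lemma_3} (instability and repelling character of $(0,0)$), Lemma \ref{lemma_7} (instability of $(0,v^*)$ for $\mu$ in the threshold interval), and Lemma \ref{lemma_8} (instability of $(u^*,0)$) to exclude the two exclusion alternatives of Theorem \ref{th_A.1} and conclude that the coexistence state $(u_s,v_s)$ is globally stable. Your extra remark about reconciling possibly distinct thresholds from Lemmas \ref{lemma_7} and \ref{lemma_8} is a sensible refinement the paper passes over silently (it uses a single $\mu_1$ in both lemmata), not a different route.
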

\begin{proof}
	The proof is analogous with Theorem \ref{th_1}. Take into account $0\leq \nu \leq\mu < 1$. By Lemma \ref{lemma_7} there exists a value $\mu_1$ for all $\mu\in (\nu, \mu_1)$ the solution $(0,v^*)$ is unstable. At the same time, Lemma \ref{lemma_8} shows that the $(u^*,0)$ is unstable whenever $\mu\geq \nu$. Moreover, Lemma \ref{lemma_3} demonstrates that the steady state $(0,0)$ is unstable and repeller. This excludes two respective options in Theorem \ref{th_A.1} from Appendix \ref{A}. Thus, $(u_s,v_s)$ is a globally stable.
\end{proof}

\subsection{When one harvesting rate transcending intrinsic growth rate}\label{4.2}
In this section, we examine the outcomes of two competitive species when one harvesting rate in the system (\ref{equ_1}) overpass respective intrinsic growth rates which means there are two possible scenarios can arise namely, $E_1(\bx) \geq r(\bx)$ or $E_2(\bx) \geq r(\bx)$ such that $0\leq \nu < 1\leq\mu$ or $ 0\leq\mu < 1\leq\nu,$ respectively.

First, we depict the result on the impact of competition when one harvesting function exceeds the respective intrinsic growth function for the case $ 0\leq\nu < 1\leq\mu$.
\subsubsection{Case \texorpdfstring{$\nu < 1\leq\mu$}{Lg}}\label{3.3}
The following lemma shows that there is no coexistence state when $ 0\leq\nu < 1\leq\mu$.
\begin{Lemma}\label{lemma_11}
	Suppose $0\leq\nu< 1\leq\mu$ thus there is no nontrivial stationary solution $(u_s,v_s)$ for the model (\ref{equ_3.8}) as well as (\ref{equ_1}).
	\end{Lemma}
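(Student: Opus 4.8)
The plan is to argue by contradiction, exploiting the fact that when $\mu\geq 1$ the reaction term for $u$ can never be positive at a positive state, so the first species cannot sustain a nontrivial steady profile. Suppose, to the contrary, that a coexistence stationary solution $(u_s,v_s)$ with $u_s,v_s>0$ on $\Omega$ exists for (\ref{equ_3.8}); since (\ref{equ_3.8}) is merely an algebraic rewriting of (\ref{equ_1}), proving the claim for one gives it for the other. Such a $u_s$ satisfies
\begin{equation*}
d_{1}\Delta u_s(\bx)+r(\bx)u_s(\bx)\left(1-\mu-\frac{u_s(\bx)+v_s(\bx)}{K(\bx)}\right)=0,\quad \bx\in\Omega,\qquad \frac{\partial u_s}{\partial \eta}=0\;\text{ on }\partial\Omega.
\end{equation*}

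The first step is to integrate this stationary equation over $\Omega$. By the divergence theorem combined with the zero Neumann boundary condition, $\int_{\Omega}\Delta u_s\,d\bx=\int_{\partial\Omega}\partial u_s/\partial\eta\,dS=0$, so the diffusion term drops out entirely and I am left with the scalar identity
\begin{equation*}
\int_{\Omega}r(\bx)u_s(\bx)\left(1-\mu-\frac{u_s(\bx)+v_s(\bx)}{K(\bx)}\right)d\bx=0.
\end{equation*}

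The crux is then a sign analysis of the integrand. Because $\mu\geq 1$, the constant $1-\mu\leq 0$; and since $u_s,v_s>0$ and $K(\bx)>0$, the quotient $(u_s+v_s)/K$ is strictly positive, so the bracket $1-\mu-(u_s+v_s)/K$ is strictly negative throughout $\overline{\Omega}$. As $r(\bx)\geq 0$ and $u_s>0$, the whole integrand is nonpositive, and it is strictly negative on the open nonempty subdomain where $r(\bx)>0$ (which carries positive measure by the standing hypothesis on $r$). Hence the integral is strictly negative, contradicting the identity above. Therefore no coexistence state with $u_s>0$ can exist.

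The only delicate point is guaranteeing the strict positivity of $u_s$ needed to make the integrand genuinely negative on $\{r>0\}$: this follows from the strong maximum principle applied to the elliptic equation for $u_s$, which forbids a nonnegative nontrivial solution from vanishing in the interior. (Equivalently, one may observe directly that a vanishing integral of a nonpositive integrand forces $u_s\equiv 0$ on $\{r>0\}$, and hence $u_s\equiv 0$ throughout by the equation, so $u_s$ cannot be a positive component.) Everything else is routine once the sign of $1-\mu$ is used, so I expect no substantive obstacle beyond this positivity remark.
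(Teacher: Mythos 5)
Your proof is correct, and it closes the argument by a genuinely different (and cleaner) route than the paper's, although the opening move is identical: both integrate the stationary $u_s$-equation over $\Omega$, use the Neumann condition to eliminate the diffusion term, and analyze the sign of $\int_\Omega r\,u_s\left(1-\mu-\frac{u_s+v_s}{K}\right)d\bx=0$. From there the paper splits into cases: for $\mu=1$ it argues the integrand is nonpositive unless $u_s+v_s\equiv 0$; for $\mu>1$ it examines the equality branch $u_s+v_s\equiv K(1-\mu)$, deduces $u_s\equiv\mathrm{const}$ by the maximum principle, and then integrates the \emph{second} equation to obtain $\int_\Omega r\,v_s(\mu-\nu)\,d\bx=0$, which forces $v_s$ to be trivial (using $\mu-\nu>0$ from $\nu<1\leq\mu$). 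You instead invoke the strong maximum principle once, to upgrade $u_s\geq 0$, $u_s\not\equiv 0$ to $u_s>0$ in $\Omega$ (writing the equation as $d_1\Delta u_s+c(\bx)u_s=0$ with bounded $c$), after which the bracket is strictly negative pointwise — for $\mu>1$ because $1-\mu<0$, and for $\mu=1$ because $(u_s+v_s)/K>0$ — so the integral is strictly negative thanks to the standing hypothesis that $r>0$ on a nonempty open subdomain. This yields the contradiction using only the first equation, with no case split and no appeal to $v_s$ at all. What your approach buys: it treats $\mu=1$ and $\mu>1$ uniformly, and it sidesteps the paper's equality branch, which is in fact vacuous for $\mu>1$ since $K(1-\mu)<0$ cannot equal the sum of two nonnegative functions (the paper nevertheless argues through it). The one point you must keep explicit is the one you already flag: strict positivity of $u_s$ via the strong maximum principle, since without it the integrand could vanish identically; your parenthetical alternative is fine but, as you note, ultimately leans on the same maximum-principle step to rule out $u_s$ vanishing on the open set $\{r>0\}$ while being nontrivial elsewhere.
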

	\begin{proof}
	 Take into account that there is a nontrivial stationary solution $(u_s(\bx),v_s(\bx))$ where $u_s\geq 0,\;v_s\geq 0$ for all $\bx\in\Omega$. The coexistence solution is to satisfy the following system of equations 
		 \begin{equation}\label{equ_3.10}
		 \begin{cases}
		 & 0= d_{1}\Delta u_s(\bx)+r(\bx)u_s(\bx)\left(1-\mu-\dfrac{u_s(\bx)+v_s(\bx)}{K(\bx)}\right),\;\bx\in \Omega,\\
		 & 0=d_{2}\Delta v_s(\bx)+r(\bx)v_s(\bx)\left(1-\nu-\dfrac{u_s(\bx)+v_s(\bx)}{K(\bx)}\right),\;\bx\in\Omega,\\
		 & \displaystyle\dfrac{\partial u_s}{\partial \eta}=\dfrac{\partial v_s}{\partial \eta}=0,\;\; \bx\in \partial\Omega.		
		 \end{cases}
		 \end{equation}
		Now, integrating the first equation over $\Omega$ and utilizing the boundary conditions, yields
		\begin{align*}
		\int_\Omega ru_s\left(1-\mu-\dfrac{u_s+v_s}{K}\right)\; d\bx=0.
		\end{align*}
		The integrand is non-positive for all $\bx \in\Omega$ whenever $\mu\geq 1$ and $u_s\not\equiv 0$ (which holds by our assumption on $u_s$  being a nontrivial coexistence solution). Assume that $\mu = 1$, then, since $u_s\not\equiv 0$,  the integrand is non-positive, unless $u_s+v_s\equiv 0$  which cannot happen for a
		nontrivial non-negative coexistence solution, hence contradiction. Next, let $\mu>1$, and if $u_s+v_s\equiv K(1-\mu)$, the system (\ref{equ_3.10}) becomes
		\begin{align}
	&	\begin{cases}
		&  d_{1}\Delta u_s(\bx)+r(\bx)u_s(\bx)\left(1-\mu-\dfrac{K(1-\mu)}{K}\right)=0,\;\bx\in \Omega,\\
		& d_{2}\Delta v_s(\bx)+r(\bx)v_s(\bx)\left(1-\nu-\dfrac{K(1-\mu)}{K}\right)=0,\;\bx\in\Omega,\\
		& \dfrac{\partial u_s}{\partial \eta}=\dfrac{\partial v_s}{\partial \eta}=0,\;\; \bx\in \partial\Omega.
		\end{cases}
		\end{align}
		Simplifying
	\begin{align}
		\begin{cases}
		&  d_{1}\Delta u_s(\bx)=0,\;\;\bx\in\Omega,\\
		& \dfrac{\partial u_s}{\partial \eta}=0,\;\; \bx\in \partial\Omega,\\
		& d_{2}\Delta v_s(\bx)+r(\bx)v_s(\bx)\left(\mu-\nu\right)=0,\;\;\bx\in\Omega,\\ 
		& \dfrac{\partial v_s}{\partial \eta}=0,\;\; \bx\in \partial\Omega.
		\end{cases}
		\end{align}
		This leads to the solution $u_s\equiv const$ on $\overline{\Omega}$ by the maximum principle \cite{Ref_7}. Integrating the second equation utilizing the boundary conditions, we get
		\begin{align*}
		\int_\Omega r(\bx)v_s(\bx)\left(\mu-\nu\right)d\bx=0,
		\end{align*}
		which is not true unless $v_s(\bx)$ is trivial, leading to $u_s\equiv K(1-\mu)$ and contradicting the assumption on the pair $(u_s,v_s)$ being nontrivial. Hence, there is no coexistence state $(u_s,v_s)$, which proves the lemma.
	\end{proof}

Next, we delineate only possible nontrivial stationary solution for the system
(\ref{equ_3.8}) is $(0, v^*)$ for any nontrivial non-negative initial conditions.
\begin{Lemma}\label{lemma_not use}
	Let $\mu \geq 1$, then $(0,v^*(\bx))$ is the only nontrivial stationary solution to (\ref{equ_3.8}) as well as (\ref{equ_1}).
	\end{Lemma}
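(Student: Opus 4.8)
The plan is to classify all nontrivial nonnegative stationary solutions of (\ref{equ_3.8}) and to show that only $(0,v^*)$ can survive. A stationary pair $(u_s,v_s)$ with $u_s,v_s\ge 0$ falls into one of three types: a genuine coexistence state with both components nontrivial, a semi-trivial state $(u_s,0)$ with $u_s\not\equiv 0$, or a semi-trivial state $(0,v_s)$ with $v_s\not\equiv 0$. These cases are exhaustive because, if a component is nontrivial, the strong maximum principle applied to the corresponding linear elliptic equation forces it to be strictly positive, so a nontrivial pair cannot have a component that is nonnegative but vanishing somewhere without vanishing identically. Lemma \ref{lemma_11} already rules out the coexistence case for the ambient regime $0\le\nu<1\le\mu$, so the remaining work is to eliminate $(u_s,0)$ and to pin down $(0,v_s)$.

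First I would rule out a nontrivial $(u_s,0)$. Setting $v_s\equiv 0$ in the first equation of (\ref{equ_3.10}), integrating over $\Omega$ and using the Neumann condition yields
\[
\int_\Omega r\,u_s\Big(1-\mu-\tfrac{u_s}{K}\Big)\,d\bx
=(1-\mu)\int_\Omega r\,u_s\,d\bx-\int_\Omega \tfrac{r}{K}\,u_s^{2}\,d\bx=0 .
\]
Because $\mu\ge 1$, both terms on the right are nonpositive, so each must vanish separately; in particular $\int_\Omega (r/K)\,u_s^{2}\,d\bx=0$, which forces $u_s\equiv 0$ on the open set where $r>0$. Since $u_s\ge 0$ then attains the value $0$ at interior points while solving the linear equation $d_1\Delta u_s+c(\bx)u_s=0$ with $c(\bx)=r(\bx)\big(1-\mu-u_s/K\big)$, the strong maximum principle gives $u_s\equiv 0$ on $\overline{\Omega}$, contradicting $u_s\not\equiv 0$. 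Hence no semi-trivial state of the form $(u_s,0)$ exists, and this argument covers $\mu=1$ and $\mu>1$ uniformly.

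It then remains to identify the surviving semi-trivial state $(0,v_s)$. With $u_s\equiv 0$, the second equation of (\ref{equ_3.10}) reduces to
\[
d_2\Delta v_s+r(\bx)\,v_s\Big(1-\nu-\tfrac{v_s}{K}\Big)=0,
\qquad \frac{\partial v_s}{\partial\eta}=0 \text{ on }\partial\Omega .
\]
Writing $r_2=1-\nu>0$ and $K_2=(1-\nu)K>0$ (legitimate since $\nu<1$ in this subsection), this is exactly the stationary problem (\ref{equ_2.2}). By Lemma \ref{lemma_ex4} that logistic problem admits a unique positive solution $v^*$, so $v_s=v^*$. Combining the three cases, $(0,v^*)$ is the only nontrivial stationary solution of (\ref{equ_3.8}), and therefore of (\ref{equ_1}).

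The main obstacle will be the exclusion of $(u_s,0)$: the integration identity only delivers $u_s=0$ on the set where $r>0$, and bridging from there to $u_s\equiv 0$ on all of $\overline{\Omega}$ requires the strong maximum principle on the possibly sign-indefinite operator $d_1\Delta+c(\bx)$, which one handles by subtracting $\|c^{+}\|_\infty$ to obtain a genuine supersolution inequality before applying the principle. The other two cases are then immediate consequences of Lemma \ref{lemma_11} and the single-species uniqueness in Lemma \ref{lemma_ex4}.
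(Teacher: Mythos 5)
Your proof is correct and follows essentially the same route as the paper's: coexistence is excluded by Lemma \ref{lemma_11}, a semi-trivial state $(u_s,0)$ is eliminated by integrating its equation over $\Omega$ with the Neumann condition, and the surviving state $(0,v_s)$ is identified with $(0,v^*)$ via the single-species uniqueness result. Your version is in fact slightly more careful than the paper's, treating $\mu=1$ and $\mu>1$ in one stroke and using the strong maximum principle (with $c(\bx)=r(1-\mu-u_s/K)\leq 0$) to pass from $u_s=0$ on $\{r>0\}$ to $u_s\equiv 0$ on $\overline{\Omega}$ -- a point the paper glosses over, since the integral identity alone only rules out nontrivial $u^*$ supported where $r>0$.
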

\begin{proof}
	 We assume that there exists a nontrivial steady state other than $(0, v^*(\bx))$. Since there is no coexistence in the system by Lemma \ref{lemma_11}, the other possible solution of such type is $(u^*(\bx), 0)$ where $u^*(\bx) \geq 0$ on $\Omega$ and satisfies the following boundary value problem for $\mu=1$
	 	\begin{align*}
 &	 \begin{cases}
	 &  d_{1}\Delta u^*(\bx)-r(\bx)u^*(\bx)\dfrac{u^*(\bx)}{K(\bx)}=0,\;\; \bx\in\Omega,\\
	 & \displaystyle\dfrac{\partial u^*}{\partial \eta}=0,\;\; \bx\in \partial\Omega.
	 \end{cases}
	 \end{align*}
	 Now, integrating and utilizing the boundary condition, yields
	  \begin{align*}
	 \int_\Omega r(\bx)\dfrac{(u^*(\bx))^2}{K(\bx)}\;d\bx=0,
	 \end{align*}
	 which is not true for a nontrivial $u^*(\bx) \geq 0$. Therefore we arrive at a contradiction, and the only nontrivial stationary solution is $(0, v^*(\bx))$ where the function $v^*(\bx)$ satisfies the second equation by Lemma \ref{lemma_7.1}. Same procedure is applicable for $\mu>1$. Thus, $(0, v^*(\bx))$ is the only nontrivial stationary solution of (\ref{equ_1}) for $\mu\geq 1$.
\end{proof}

The following lemma proves that $(0, 0)$ of the system (\ref{equ_3.8}) as well as (\ref{equ_1}) is unstable but is not a repeller by the second definition of Theorem \ref{th_A.1} from Appendix \ref{A}, when the harvesting rate $E_1(\bx)$ surpasses or equal to the intrinsic growth rate $r(\bx)$. Note that the proof is analogous with [\cite{Ref_6}, Theorem 9].

\begin{Lemma}\label{lemma_12}
{Consider the case} $ 0\leq \nu < 1\leq\mu$. Thus, the trivial steady state $(0,0)$ of the model (\ref{equ_3.8}) as well as (\ref{equ_1}) is unstable, but is not a repeller by the second definition of Theorem \ref{th_A.1} from Appendix \ref{A}.
	\end{Lemma}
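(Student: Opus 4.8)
The plan is to separate the two assertions --- instability and failure to repel --- since in this regime they come from the two decoupled directions of the linearization at the origin, and the hypothesis $0\le\nu<1\le\mu$ makes these directions behave oppositely. First I would linearize (\ref{equ_3.8}) about $(0,0)$, which produces the uncoupled pair $\partial_t u = d_1\Delta u + r(\bx)(1-\mu)u$ and $\partial_t v = d_2\Delta v + r(\bx)(1-\nu)v$ under Neumann conditions, with associated principal eigenvalues $\gamma_1$ (for the $u$-mode) and $\sigma_1$ (for the $v$-mode), exactly as in Lemma \ref{lemma_3}.

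For the instability I would argue in the $v$-direction. Since $\nu<1$, the coefficient $r(\bx)(1-\nu)$ is nonnegative and strictly positive on the subdomain where $r>0$. Inserting the test function $\phi\equiv 1$ into the variational characterization of the principal eigenvalue gives
\[
\sigma_1 \geq \frac{1}{|\Omega|}\int_\Omega (1-\nu)\,r(\bx)\,d\bx > 0,
\]
just as in Lemma \ref{lemma_3}. A positive principal eigenvalue in the $v$-component forces the origin to be unstable, settling the first claim.

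For the non-repelling claim I would exploit the $u$-direction, where $\mu\geq 1$ makes $r(\bx)(1-\mu)$ non-positive. From the variational formula
\[
\gamma_1 = \sup_{\psi\neq 0}\frac{-d_1\int_\Omega|\nabla\psi|^2\,d\bx + \int_\Omega r(\bx)(1-\mu)\psi^2\,d\bx}{\int_\Omega \psi^2\,d\bx} \leq 0,
\]
since both numerator terms are non-positive (with $\gamma_1=0$, attained only by constants, precisely when $\mu=1$). Thus the origin is at best neutrally stable along $u$. The decisive structural fact is that $\{v\equiv 0\}$ is invariant for the full nonlinear flow: if $v_0\equiv 0$ then $v(t,\bx)\equiv 0$ and $u$ solves $\partial_t u = d_1\Delta u + r(\bx)u\bigl(1-\mu-(u/K)\bigr)$. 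For $\mu\geq 1$ and $u\geq 0$ the reaction term is non-positive, so integrating over $\Omega$ and using the Neumann condition yields $\frac{d}{dt}\int_\Omega u\,d\bx = \int_\Omega r(\bx)u\bigl(1-\mu-(u/K)\bigr)\,d\bx \leq 0$, and Lemma \ref{lemma_not use} confirms that $u\equiv 0$ is the only nonnegative steady state on this axis; standard convergence arguments then give $u(t,\cdot)\to 0$. Hence initial data $(u_0,0)$ with $u_0$ small and nontrivial generate trajectories converging to $(0,0)$, so the origin attracts points arbitrarily close to it and cannot be a repeller in the sense of the second definition of Theorem \ref{th_A.1}.

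I expect the main obstacle to be rigorously establishing the decay $u(t,\cdot)\to 0$ along the invariant axis: because $\mu\geq 1$ violates the positivity hypothesis $r_1>0$ underlying Lemmas \ref{lemma_ex1}--\ref{lemma_ex2}, I cannot cite those results directly. Instead I would justify the decay by a comparison/energy argument --- bounding $u$ above by a spatially homogeneous supersolution whose reaction coefficient is dominated by the non-positive term, or invoking the monotone-semiflow convergence theory that underlies Theorem \ref{th_A.1} --- and then translate this convergence into the precise statement that $(0,0)$ fails the repeller condition.
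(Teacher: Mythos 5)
Your proposal is correct, and the instability half is essentially the paper's argument: the paper integrates the eigenvalue problem against the positive principal eigenfunction to get $\sigma_1=\int_\Omega(1-\nu)r\phi_1\,d\bx\big/\int_\Omega\phi_1\,d\bx>0$ and $\gamma_1=\int_\Omega(1-\mu)r\psi_1\,d\bx\big/\int_\Omega\psi_1\,d\bx\le 0$, which is equivalent to your variational bound with the constant test function. Where you genuinely diverge is the non-repeller half. The paper works with interior data ($u_0\not\equiv 0$, $v_0\not\equiv 0$, $u_0+v_0<\delta$) and proves two integral differential inequalities: $\frac{d}{dt}\int_\Omega v\,d\bx>\rho\int_\Omega v\,d\bx$ with $\rho>0$, and, using $\varepsilon=\sup_{\bx\in\Omega}r_1r(\bx)<0$ for $\mu>1$, $\frac{d}{dt}\int_\Omega u\,d\bx<-|\varepsilon|\int_\Omega u\,d\bx$; Gr\"onwall then gives exponential growth of $\int_\Omega v$ and exponential decay of $\int_\Omega u$, and the paper concludes $(0,0)$ is ``repelling in $v$ but attracting in $u$,'' hence not a repeller (the case $\mu=1$, where $\varepsilon=0$ kills this estimate, is only sketched there). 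Your route instead restricts to the invariant axis $\{v\equiv 0\}$ and exhibits explicit trajectories $(u(t,\cdot),0)$ that never escape a neighbourhood of the origin. This is arguably a cleaner verification of the negation of the second definition in Theorem \ref{th_A.1}: the paper's interior trajectories do leave every neighbourhood (the $v$-component grows), so the failure of the repeller property really rests on axis data, which you make explicit while the paper leaves it implicit. Note also that the obstacle you flag at the end is avoidable: to contradict the repeller definition you do not need $u(t,\cdot)\to 0$ at all, only that the trajectory stays in $\mathcal{M}$, and since on the axis the reaction term $r(\bx)u\left(1-\mu-\frac{u}{K(\bx)}\right)\le -\,r(\bx)\frac{u^2}{K(\bx)}\le 0$ for $\mu\ge 1$, the parabolic maximum principle gives $\sup_\Omega u(t,\cdot)\le\sup_\Omega u_0$ directly, so no comparison with Lemmas \ref{lemma_ex1}--\ref{lemma_ex2} (whose hypothesis $r_1>0$ indeed fails here) is needed; this also handles $\mu=1$ and a growth rate with $\inf_\Omega r=0$ uniformly, two cases where the paper's $\varepsilon$-based Gr\"onwall step degenerates. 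What the paper's approach buys in exchange is quantitative exponential rates and the explicit ``repelling in $v$'' statement for interior data, which your axis argument does not provide.
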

\begin{proof}
	 First, we assume $\mu > 1$ and linearized the system  (\ref{equ_2}) near the trivial equilibrium
	 \begin{align}
	\begin{cases}
	 & \displaystyle\dfrac{\partial u}{\partial t}= d_{1}\Delta u+r_1r(\bx)u,\;\; t>0,\;\bx\in\Omega,\vspace{-3ex}\\\\
	 & \displaystyle\dfrac{\partial v}{\partial t}=d_{2}\Delta v+r_2r(\bx)v,\;\; t>0,\;\bx\in\Omega,\vspace{-3ex}\\\\
	 & \displaystyle\dfrac{\partial u}{\partial \eta}=\dfrac{\partial v}{\partial \eta}=0,\;\;\; \bx\in \partial\Omega,\\
	 & u(0,\bx)=u_0(\bx),\;v(0,\bx)=v_0(\bx),\;\;\; \bx\in\Omega.	
	 \end{cases} 
	 \end{align}
	 The corresponding eigenvalue problems are
	 \begin{equation}\label{equ_3.11}
	\begin{cases}
	 & \gamma \psi=d_{1}\Delta \psi+r_1r(\bx)\psi,\;\;\bx\in\Omega,\\	
	 & \sigma \phi= d_{2}\Delta \phi+r_2r(\bx)\phi,\;\;\bx\in\Omega,\\	
	 &\dfrac{\partial \psi}{\partial \eta}=\dfrac{\partial \phi}{\partial \eta}=0,\;\; \bx\in \partial\Omega.
	 \end{cases}
	 \end{equation}
	 Consider $\psi_1$ and $\phi_1$ be two eigenfunctions (that can be chosen positive) and corresponding principal eigenvalues of (\ref{equ_3.11}) $\gamma_1$ and $\sigma_1$, respectively \cite{Ref_3}. Integrating
	 (\ref{equ_3.11}) using the boundary condition, yields
	 \begin{align*}
	 \gamma_1= \dfrac{\int_\Omega r_1r(\bx)\psi_1\;d\bx}{\int_\Omega \psi_1\;d\bx},
	 \end{align*}
	 which implies
	 \begin{equation}\label{equ_3.12}
 \gamma_1= \dfrac{\int_\Omega (1-\mu)r(\bx)\psi_1\;d\bx}{\int_\Omega \psi_1\;d\bx}<0,\;\;\mu>1,
	 \end{equation}
	 and
	 \begin{align*}
	 \sigma_1= \dfrac{\int_\Omega r_2r(\bx)\phi_1\;d\bx}{\int_\Omega \phi_1\;d\bx},
	 \end{align*}
	 implies
	 \begin{equation}\label{equ_3.13}
	  \sigma_1= \dfrac{\int_\Omega (1-\nu)r(\bx)\phi_1\;d\bx}{\int_\Omega \phi_1\;d\bx}>0,\;\;\nu<1,
	 \end{equation}
	 respectively. Thus, the steady state $(0,0)$ is unstable. For the first equation of (\ref{equ_3.8}) note that when $\mu> 1$ parameters are negative. By Lemma \ref{lemma_10}, the time-dependent solutions $(u(t,\bx),v(t,\bx))$ are positive for $u_0\not\equiv 0$ or $v_0\not\equiv 0$. We recall $K_1(\bx)=(1-\mu)K(\bx)$ and establish the following inequality whenever $\mu>1$ 
	 \begin{align*}
	 1-\dfrac{u(t,\bx)+v(t,\bx)}{K_1(\bx)}=1+\dfrac{u(t,\bx)+v(t,\bx)}{\mid K_1(\bx)\mid}	\geq 1.
	 \end{align*}
	 Multiplying each side by $r_1$ whenever $\mu>1$, where $r_1=1-\mu$, we obtain
	 \begin{align*}
	 	 r_1\left(1-\frac{u(t,\bx)+v(t,\bx)}{K_1(\bx)}\right)=r_1\left(1+\frac{u(t,\bx)+v(t,\bx)}{\mid K_1(\bx)\mid}\right)	\leq r_1.
	 \end{align*}
	 Thus, we obtain from first equation in (\ref{equ_2})
	 \begin{align*}
	& \frac{\partial u}{\partial t}= d_{1}\Delta u+r_1r(\bx)u\left(1-\frac{u(t,\bx)+v(t,\bx)}{K_{1}(\bx)}\right)\leq d_{1}\Delta u+r_1r(\bx)u.
	 \end{align*}
	 Therefore,
	 \begin{align*}
	 & \frac{\partial u}{\partial t}\leq d_{1}\Delta u+r_1r(\bx)u(t,\bx),\\
	 & \frac{\partial v}{\partial t}\geq d_{2}\Delta v+r_2r(\bx)v(t,\bx)\left(1-\frac{u(t,\bx)+v(t,\bx)}{K_{2}(\bx)}\right).
	 \end{align*}
	 Now, integrating {over $\Omega$} and utilizing the boundary condition, yields
	 \begin{align*}
	 & \frac{ d}{ dt}\int_\Omega u(t,\bx)\;d\bx\leq \int_\Omega r_1r(\bx)u(t,\bx)\;d\bx,\\
	 & \frac{d}{dt}\int_\Omega v(t,\bx)\;d\bx\geq \int_\Omega r_2r(\bx)v(t,\bx)\left(1-\frac{u(t,\bx)+v(t,\bx)}{K_{2}(\bx)}\right)\;d\bx.
	 \end{align*}
		We consider the positive numbers 
	$	0<\rho \leq \inf\limits_{\bx\in\Omega} r_2 r(\bx)\left(1-\dfrac{2\delta}{K_2(\bx)}\right) $
	and
	$	0< \delta \leq \inf\limits_{\bx\in\Omega} \left(\dfrac{K_2(\bx)}{4}\right) $
	(see [\cite{Ref_5} Theorem 5, \cite{Ref_6} Theorem 9]) such that for initial conditions satisfying $u_0(\bx)+v_0(\bx)<\delta,\; u_0\not\equiv 0,\; v_0\not\equiv 0,\; u_0\geq 0,$ and $ v_0\geq 0$, yields
	 \begin{align*}
	 \frac{d}{dt}\int_\Omega v(t,\bx)\;d\bx>\int_\Omega r_2r(\bx)v(t,\bx)\left(1-\frac{2\delta}{K_{2}(\bx)}\right)\;d\bx.
	 \end{align*}
	Finally, we get
	 \begin{align*}
	 \frac{d}{dt}\int_\Omega v(t,\bx)\;d\bx>\rho \int_\Omega v(t,\bx)\;d\bx.
	 \end{align*}
	 Utilizing the Gr\"onwall inequality from Theorem \ref{th_A.4} in Appendix \ref{A}, yields
	 \begin{align*}
	 \int_\Omega v(t,\bx)\;d\bx\geq e^{\rho t} \int_\Omega v(0,\bx)\;d\bx,
	 \end{align*}
	 where $t > 0$. Note that $\rho$ is positive which implies the integral on the right side grows exponentially. 
	 Now, consider the first equation
	 \begin{align*}
	 \frac{d}{dt}\int_\Omega u(t,\bx)\;d\bx\leq\int_\Omega r_1r(\bx)u(t,\bx)\;d\bx.
	 \end{align*}
	 Since $r_{1}r(\bx)<0$ whenever $\mu>1$, there exists a real number
	 $\varepsilon =\sup\limits_{\bx\in\Omega}r_{1}r(\bx)< 0,$ for all $\mu> 1$ (see [\cite{Ref_6} Theorem 9, \cite{Ref_66} Theorem 3.4])
	 such that $r_{1}r(\bx)<-\mid \varepsilon\mid<0$ which yields
	  \begin{align*}
	 & \frac{d}{dt}\int_\Omega u(t,\bx)\;d\bx \leq\int_\Omega r_1r(\bx)u(t,\bx)\;d\bx <-\mid \varepsilon\mid \int_\Omega u(t,\bx)\;d\bx.
	  \end{align*}
	  Now, utilizing the Gr\"onwall inequality (see Theorem \ref{th_A.4} in Appendix \ref{A}), yields
	  \begin{align*}
	  \int_\Omega u(t,\bx)\;d\bx\leq e^{-\mid \varepsilon\mid t} \int_\Omega u(0,\bx)\;d\bx.
	  \end{align*}
	  In the right-hand-side { of the above equation}, there is an exponential term which converges to zero {as time grows}. Thus, the solution $(0, 0)$ is repelling in $v(t, \bx)$ and attracting in $u(t, \bx)$ which does not satisfy the second definition of Theorem \ref{th_A.1} from Appendix \ref{A}.
	  
	  Now, take into account $\mu = 1$, instability of $(0, 0)$ follows from the inequality (\ref{equ_3.13}). The first equation of (\ref{equ_3.8}) becomes 
	    \begin{align}
	  \begin{cases}
	  & \displaystyle\dfrac{\partial u}{\partial t}= d_{1}\Delta u(t,\bx)+r(\bx)u(t,\bx)\left(1-1-\dfrac{u(t,\bx)+v(t,\bx)}{K(\bx)}\right),\\
	  & \displaystyle\dfrac{\partial v}{\partial t}=d_{2}\Delta v(t,\bx)+r(\bx)v(t,\bx)\left(1-\nu-\dfrac{u(t,\bx)+v(t,\bx)}{K(\bx)}\right),\\ 
	  & \displaystyle t>0,\;\;\; \bx\in\Omega,\\
	  & \displaystyle\dfrac{\partial u}{\partial \eta}=\dfrac{\partial v}{\partial \eta}=0,\;\;\; \bx\in\ \partial\Omega,\\
	  & u(0,\bx)=u_0(\bx),\;v(0,\bx)=v_0(\bx),\;\;\; \bx\in\Omega,
	  \end{cases}
	  \end{align}
	  which implies
	  \begin{equation}\label{equ_3.14}
	  \begin{cases}
	  & \displaystyle\dfrac{\partial u}{\partial t}= d_{1}\Delta u(t,\bx)-r(\bx)u(t,\bx)\left(\dfrac{u(t,\bx)+v(t,\bx)}{K(\bx)}\right),\\
	  & \displaystyle\dfrac{\partial v}{\partial t}=d_{2}\Delta v(t,\bx)+r(\bx)v(t,\bx)\left(1-\nu-\dfrac{u(t,\bx)+v(t,\bx)}{K(\bx)}\right),\\ 
	  & \displaystyle t>0,\;\;\; \bx\in\Omega,\\
	  & \displaystyle\dfrac{\partial u}{\partial \eta}=\dfrac{\partial v}{\partial \eta}=0,\;\;\; \bx\in\ \partial\Omega,\\
	  & u(0,\bx)=u_0(\bx),\;v(0,\bx)=v_0(\bx),\;\;\; \bx\in\Omega.
	  \end{cases}
	  \end{equation}
	  The rest of the proof follows the same procedures which are discussed above and we omit this proof for $\mu=1$. Therefore, for $ 0\leq \nu < 1\leq\mu$ the steady state $(0,0)$ is unstable but not repeller.
\end{proof}

This next result shows global asymptotic stability for the steady state $(0, v^*(\bx))$ of the system (\ref{equ_1}) when the harvesting coefficient satisfies $0\leq\nu < 1\leq \mu$ using Lemma \ref{lemma_11}, Lemma \ref{lemma_not use}, and Lemma \ref{lemma_12}. 

\begin{Th}\label{th_5}
Let $0\leq\nu<1\leq \mu$. Thus the stationary solution $(0,v^*(\bx))$ of the system (\ref{equ_3.8}) as well as (\ref{equ_1}) be globally asymptotically stable.		
	\end{Th}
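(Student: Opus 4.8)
The plan is to combine the three preceding lemmas with the classification of the long-time dynamics supplied by Theorem \ref{th_A.1} in Appendix \ref{A}, exactly as the case $\mu\leq\nu$ was settled in Theorem \ref{th_1}. First I would recall that by Theorem \ref{lemma_10} the system (\ref{equ_3.8}) admits, for every nontrivial nonnegative datum $(u_0,v_0)$, a unique bounded positive solution that remains in $\mathbf{S}_\rho$, so the competitive structure of (\ref{equ_3.8}) places it within the scope of Theorem \ref{th_A.1}: every trajectory converges to a steady state and the admissible limiting regimes are globally stable coexistence, global stability of a semi-trivial state, or bistability. The strategy is to eliminate every regime except global convergence to $(0,v^*(\bx))$.

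Next I would discard the competing regimes using the earlier results. Lemma \ref{lemma_11} guarantees that no coexistence steady state $(u_s,v_s)$ exists when $0\leq\nu<1\leq\mu$, which removes both the globally-stable-coexistence regime and the bistable regime, since the latter would require an interior unstable equilibrium. Lemma \ref{lemma_not use} shows that $(0,v^*(\bx))$ is the unique nontrivial stationary solution; in particular $(u^*(\bx),0)$ does not exist, so competitive exclusion in favor of $u$ is impossible. Finally, Lemma \ref{lemma_12} shows that $(0,0)$ is unstable and repelling in the $v$-component, so the trivial state cannot be the global attractor either. The sole surviving possibility is that $(0,v^*(\bx))$ attracts all nontrivial nonnegative trajectories.

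To make the convergence mechanism transparent and to fix the direction of exclusion, I would verify it by a direct comparison argument. Since $\mu\geq 1$, the reaction term for $u$ obeys $r(\bx)u(1-\mu-(u+v)/K)\leq r_1 r(\bx)u$ with $r_1=1-\mu\leq 0$; comparing with the linear problem whose principal eigenvalue is $\gamma_1\leq 0$ (cf. (\ref{equ_3.12})) gives $\int_\Omega u(t,\bx)\,d\bx\to 0$, and a standard parabolic bootstrap upgrades this to $u(t,\cdot)\to 0$ uniformly on $\overline{\Omega}$. Dropping the nonnegative $u$ in the $v$-equation shows that $v$ is a subsolution of the logistic problem (\ref{equ_ex2}), so $\limsup_{t\to\infty} v(t,\cdot)\leq v^*$ by Lemma \ref{lemma_ex4}; conversely, once $u(t,\cdot)<\epsilon$ the function $v$ is a supersolution of the logistic problem with carrying capacity perturbed by $\epsilon$, whose positive equilibrium tends to $v^*$ as $\epsilon\to 0$, yielding $\liminf_{t\to\infty} v(t,\cdot)\geq v^*$. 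Hence $(u,v)\to(0,v^*)$, which is the direction forced by the dichotomy above.

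The main obstacle I anticipate is the borderline case $\mu=1$. For $\mu>1$ the decay of $u$ is linear and independent of $v$, so the comparison chain is unconditional; but for $\mu=1$ the linearized rate $\gamma_1$ vanishes and the extinction of $u$ is driven only by the nonlinear term $-r(\bx)u(u+v)/K$, which needs $v$ to be bounded below away from zero. This induces a mild circularity between the extinction $u\to 0$ and the convergence $v\to v^*$ that must be broken, for instance by first extracting a uniform positive lower bound for $\liminf v$ from the repelling behavior established in Lemma \ref{lemma_12}, and only then closing the extinction estimate for $u$. Handling this coupling carefully, precisely as in [\cite{Ref_6}, Theorem 9], is the delicate part of the argument.
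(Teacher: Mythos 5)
Your elimination step coincides with the paper's (Lemma \ref{lemma_11}: no coexistence; Lemma \ref{lemma_not use}: $(0,v^*)$ is the only nontrivial steady state; Lemma \ref{lemma_12}: $(0,0)$ unstable), but your first route to convergence, via the trichotomy of Theorem \ref{th_A.1}, is not actually available in this regime: hypothesis 3 of that theorem requires \emph{both} semitrivial equilibria to exist, i.e.\ some $\widetilde{m}_{1}>0$ with $\mathcal{T}_t((\widetilde{m}_{1},0))=(\widetilde{m}_{1},0)$, and for $\mu\geq 1$ no such state exists --- as you yourself observe when citing Lemma \ref{lemma_not use}. So the classification ``coexistence / exclusion one way / exclusion the other way'' cannot be invoked and then pruned; it was never on the table. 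This is precisely why the paper does \emph{not} argue as in Theorem \ref{th_1} here: instead it concludes with Theorem \ref{th_A.3} (Pao's convergence theorem for quasimonotone nonincreasing systems), using the ordered upper/lower solutions bounding $\mathbf{S}_\rho$ from Theorem \ref{lemma_10} together with uniqueness of the nontrivial steady state (Lemmas \ref{lemma_ex3}--\ref{lemma_ex4}) to force every trajectory from $\mathbf{S}_\rho$ to converge to $(0,v^*(\bx))$, with Lemma \ref{lemma_12} excluding the trivial limit.

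Your third paragraph, however, is a legitimate standalone substitute for Theorem \ref{th_A.3}, and a genuinely different (and more explicit) route: the exponential decay of $\int_\Omega u\,d\bx$ for $\mu>1$ is exactly the Gr\"onwall estimate already carried out inside the proof of Lemma \ref{lemma_12}, and the logistic sub/supersolution sandwich with Lemma \ref{lemma_ex4} then pins $v$ to $v^*$. What the paper's route buys is uniform treatment of $\mu=1$ and $\mu>1$ in one stroke; what yours buys is an explicit rate and mechanism. On your anticipated obstacle at $\mu=1$: the circularity can be broken more cheaply than you suggest, with no lower bound on $v$ at all. Under the standing assumption $r>0$ on $\overline{\Omega}$ (as in Theorem \ref{lemma_ex5}), dropping $v\geq 0$ gives $u_t\leq d_1\Delta u - r(\bx)u^2/K(\bx)$, and integrating over $\Omega$ with the Neumann condition and Cauchy--Schwarz yields
\begin{equation*}
\frac{d}{dt}\int_\Omega u\,d\bx \;\leq\; -\,\frac{c}{|\Omega|}\Bigl(\int_\Omega u\,d\bx\Bigr)^{2},
\qquad c:=\inf_{\bx\in\overline{\Omega}}\frac{r(\bx)}{K(\bx)}>0,
\end{equation*}
so $\int_\Omega u\,d\bx$ decays algebraically independently of $v$, after which your sandwich for $v$ closes as in the case $\mu>1$. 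With that repair, and with the appeal to Theorem \ref{th_A.1} either deleted or replaced by the paper's appeal to Theorem \ref{th_A.3}, your argument is sound.
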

\begin{proof}
{From the} Lemma \ref{lemma_12}, the solution $(0, 0)$ of this model is unstable. At the same time, {from the} Lemma \ref{lemma_11}, there is no coexistence solution. The remaining non-negative steady state is the solution $(0, v^*(\bx))$, see Lemma \ref{lemma_not use}. This solution is unique by uniqueness of $v^*(\bx)$, see Lemma \ref{lemma_ex3} and Lemma \ref{lemma_ex4}. Recall the definition of $\mathbf{S}_\rho$ from (\ref{equ_s})
	\begin{align*}
	\mathbf{S}_\rho\equiv  \left\{\left(u_1,v_1\right)\in C \left([0,\infty) \times\overline{\Omega}\right)\times C \left([0,\infty) \times\overline{\Omega}\right) ; 0\leq u_1\leq \rho_u,\; 0\leq v_1\leq \rho_v\right\} 
	\end{align*}
	where
	\begin{align*}
	\rho_u = \max \left\{\sup_{\bx\in\Omega} u_0(\bx),1 \right\}, \;\; \rho_v =\max \left\{ \sup_{\bx\in\Omega} v_0(\bx),\; \sup_{\bx\in\Omega} K(\bx)\right\}.
\end{align*}
 Utilizing the Theorem \ref{th_A.3} from Appendix \ref{A}, we obtain the time dependent solution $(u(t, \bx), v(t, \bx))$ of (\ref{equ_3.8}) as well as (\ref{equ_1}) will converge to the unique equilibrium $(0, v^*(\bx))$ for any initial condition from $\mathbf{S}_\rho$, which complete the proof.
	\end{proof}
Now, we demonstrate the result on the outcome of the competition when one harvesting function exceeds respective intrinsic growth rates for $ \mu < 1\leq\nu$.

\subsubsection{Case \texorpdfstring{$\mu < 1\leq\nu$}{Lg}}\label{3.4}
This subsection contains lemmata which are symmetrical and proven in Subsection \ref{3.3}. Therefore, we ignore the proofs and instead mention to corresponding lemmata from Subsection \ref{3.3}. 

Investigating the case when harvesting rate $E_2(\bx)$ surpasses or identical to the intrinsic growth rate $ r(\bx)$ for all $\bx \in \Omega$.

In the following lemma, we prove that there exists no coexistence whenever $0\leq \mu < 1\leq\nu$.
\begin{Lemma}\label{lemma_18}
	Let $ 0\leq\mu < 1\leq\nu$, there exists no coexistence solution of the system (\ref{equ_1}). 
	\end{Lemma}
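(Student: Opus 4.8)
The plan is to mirror the argument of Lemma \ref{lemma_11}, exploiting the complete symmetry between the roles of $\mu$ and $\nu$ (equivalently, between the two species $u$ and $v$) in the system (\ref{equ_1}). Under the hypothesis $0\leq\mu<1\leq\nu$, the harvesting on the second species now dominates its growth, so I would run the same contradiction argument but interchanging the roles of the first and second equations of the coexistence system (\ref{equ_3.10}). Specifically, I would begin by assuming toward a contradiction that a nontrivial stationary solution $(u_s,v_s)$ with $u_s\geq 0$, $v_s\geq 0$ exists and satisfies the steady-state version of (\ref{equ_3.8}).

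The key steps are as follows. First I would integrate the \emph{second} equation of the coexistence system over $\Omega$, using the zero Neumann boundary condition to kill the Laplacian term, obtaining
\begin{align*}
\int_\Omega r(\bx)v_s(\bx)\left(1-\nu-\dfrac{u_s(\bx)+v_s(\bx)}{K(\bx)}\right)\,d\bx=0.
\end{align*}
Since $\nu\geq 1$ and $u_s,v_s\geq 0$, the integrand is non-positive everywhere, which already forces the degenerate situations to be examined. In the case $\nu=1$ the integrand vanishes only if $u_s+v_s\equiv 0$, impossible for a nontrivial non-negative pair. In the case $\nu>1$, the only way the integral can vanish with $v_s\not\equiv 0$ is if $u_s+v_s\equiv K(1-\nu)$ on the support of $v_s$; substituting this relation reduces the system to $d_2\Delta v_s=0$ together with $d_1\Delta u_s+r(\bx)u_s(\nu-\mu)=0$. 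The maximum principle then yields $v_s\equiv const$, and integrating the remaining equation for $u_s$ gives $\int_\Omega r(\bx)u_s(\bx)(\nu-\mu)\,d\bx=0$, which (since $\nu>\mu$ and $r>0$ on a nonempty open set) forces $u_s$ trivial, contradicting nontriviality of the pair.

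I expect the main obstacle to be purely bookkeeping rather than conceptual: one must track carefully which species is driven out and ensure the sign $\nu-\mu>0$ is correctly invoked (it is strictly positive here since $\mu<1\leq\nu$), mirroring the role played by $\mu-\nu$ in Lemma \ref{lemma_11}. Because the system (\ref{equ_1}) is fully symmetric under the simultaneous swap $(u,v,\mu,\nu)\mapsto(v,u,\nu,\mu)$, the cleanest write-up simply invokes this symmetry and states that the proof is analogous to Lemma \ref{lemma_11} with the roles of the two equations interchanged, so that no coexistence solution can exist.
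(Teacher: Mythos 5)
Your proposal is correct and takes exactly the paper's approach: the paper's own proof of this lemma is the single line ``The proof is analogous to Lemma \ref{lemma_11},'' i.e.\ precisely the symmetry argument under the swap $(u,v,\mu,\nu)\mapsto(v,u,\nu,\mu)$ that you invoke. The details you spell out---integrating the second equation of the steady-state system, treating $\nu=1$ and $\nu>1$ separately, reducing via $u_s+v_s\equiv K(1-\nu)$ to $d_2\Delta v_s=0$ and $d_1\Delta u_s+r(\bx)u_s(\nu-\mu)=0$, and concluding by integration using $\nu-\mu>0$---are the faithful mirror image of the paper's proof of Lemma \ref{lemma_11}.
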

	\begin{proof}
	The proof is analogous to Lemma \ref{lemma_11}.
		\end{proof}

	Next, we will show that the only possible nontrivial stationary solution for the system (\ref{equ_1}) is $( u^*,0)$ for any nontrivial non-negative initial conditions.
	\begin{Lemma}\label{lemma_not use_1}
		Assume $\nu\geq 1$, thus $(u^*(\bx),0)$ is the only nontrivial steady state of the model (\ref{equ_1}).
	\end{Lemma}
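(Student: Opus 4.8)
The plan is to mirror the argument of Lemma~\ref{lemma_not use} verbatim, interchanging the two species, i.e. swapping $u\leftrightarrow v$ and $\mu\leftrightarrow\nu$. First I would argue by contradiction and suppose that, besides $(u^*(\bx),0)$, the system~(\ref{equ_1}) possesses some other nontrivial non-negative steady state. Since Lemma~\ref{lemma_18} already excludes any genuine coexistence state $(u_s,v_s)$ (both components nontrivial) in the regime $0\leq\mu<1\leq\nu$, the only remaining candidate is the semi-trivial profile $(0,v^*(\bx))$ with $v^*\geq 0$, $v^*\not\equiv 0$.

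Next I would rule out this candidate. Setting $u\equiv 0$ in the stationary form of the second equation of~(\ref{equ_1}) and taking $\nu=1$ first, $v^*$ would have to satisfy
\begin{align*}
d_{2}\Delta v^*(\bx)-r(\bx)\frac{\bigl(v^*(\bx)\bigr)^2}{K(\bx)}=0,\quad \bx\in\Omega,\qquad \frac{\partial v^*}{\partial \eta}=0,\quad \bx\in\partial\Omega.
\end{align*}
Integrating over $\Omega$ and discarding the Laplacian term via the zero-flux condition yields
\begin{align*}
\int_\Omega r(\bx)\frac{\bigl(v^*(\bx)\bigr)^2}{K(\bx)}\,d\bx=0,
\end{align*}
which is impossible: $r(\bx)\geq 0$ is positive on an open subset, $K(\bx)>0$, and $v^*\geq 0$ is nontrivial, so the integrand is non-negative and not identically zero. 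This contradiction disposes of the $\nu=1$ subcase.

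For $\nu>1$ I would run the same integration on $d_2\Delta v^*+r(\bx)v^*\bigl(1-\nu-\tfrac{v^*}{K}\bigr)=0$; since $1-\nu<0$ the integrand $r(\bx)v^*\bigl(1-\nu-\tfrac{v^*}{K}\bigr)$ is non-positive and not identically zero for nontrivial $v^*\geq 0$, so again the integral cannot vanish and the contradiction persists. With both coexistence and $(0,v^*)$ excluded, the only nontrivial steady state that survives is $(u^*(\bx),0)$, whose existence and uniqueness (valid because $\mu<1$, hence $K_1=(1-\mu)K>0$) are supplied by Lemma~\ref{lemma_ex1} and Lemma~\ref{lemma_ex2}. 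I expect no genuine obstacle here: the argument is the exact mirror of Lemma~\ref{lemma_not use}, and the only point needing a little care is tracking the sign of the integrand uniformly across the borderline $\nu=1$ and the strict $\nu>1$ subcases.
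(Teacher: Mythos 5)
Your proposal is correct and is essentially the paper's own proof: the paper disposes of this lemma by declaring it analogous to Lemma~\ref{lemma_not use} (with Lemma~\ref{lemma_7.11} in place of Lemma~\ref{lemma_7.1}), and your mirrored argument --- excluding coexistence via Lemma~\ref{lemma_18}, then ruling out $(0,v^*)$ by integrating the stationary equation over $\Omega$ with the zero-flux boundary condition in the subcases $\nu=1$ and $\nu>1$ --- is exactly that argument written out. The only cosmetic difference is that you invoke Lemmas~\ref{lemma_ex1}--\ref{lemma_ex2} for the existence and uniqueness of $u^*$, which is harmless.
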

	\begin{proof}
		The proof is analogous of Lemma \ref{lemma_not use} with using Lemma \ref{lemma_7.11}. 
	\end{proof}

The following lemma proves that $(0, 0)$ of the system (\ref{equ_1}) is unstable but is not a repeller whenever $ 0\leq \mu < 1\leq\nu$ when the harvesting rate exceeds or equal to the intrinsic growth rate.

\begin{Lemma}\label{lemma_19}
	Assume $ 0\leq \mu < 1\leq\nu$. Thus, the steady state $(0,0)$ of the model (\ref{equ_1}) is unstable, but is not a repeller by the second definition of Theorem  \ref{th_A.1} from Appendix \ref{A}.
\end{Lemma}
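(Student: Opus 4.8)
The plan is to mirror the argument of Lemma \ref{lemma_12}, interchanging the roles of the two species, since the hypothesis $0 \le \mu < 1 \le \nu$ is precisely the hypothesis of Lemma \ref{lemma_12} with $\mu$ and $\nu$ swapped. Here it is the $v$-species whose harvesting overwhelms its growth ($r_2 = 1-\nu \le 0$, $K_2 = (1-\nu)K \le 0$), while the $u$-species survives ($r_1 = 1-\mu > 0$, $K_1 = (1-\mu)K > 0$). Existence and positivity of the time-dependent solution in this regime are supplied by Theorem \ref{lemma_18.1}, which is the analogue for this case of the role played by Lemma \ref{lemma_10} in the proof of Lemma \ref{lemma_12}.

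First I would linearize (\ref{equ_1}) about $(0,0)$ to obtain the decoupled system $\partial_t u = d_1 \Delta u + r_1 r(\bx) u$ and $\partial_t v = d_2 \Delta v + r_2 r(\bx) v$ with Neumann conditions, and pass to the associated eigenvalue problems. Integrating each eigenvalue equation (equivalently, choosing the positive principal eigenfunctions and integrating over $\Omega$) yields
\[
\gamma_1 = \frac{\int_\Omega (1-\mu) r(\bx)\psi_1\, d\bx}{\int_\Omega \psi_1\, d\bx} > 0, \qquad \sigma_1 = \frac{\int_\Omega (1-\nu) r(\bx)\phi_1\, d\bx}{\int_\Omega \phi_1\, d\bx} < 0,
\]
the first strict inequality because $\mu < 1$ and the second because $\nu > 1$. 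Positivity of $\gamma_1$ already forces instability of $(0,0)$, while the negativity of $\sigma_1$ is the signal that the origin is attracting in the $v$-direction, which is what ultimately breaks the repeller property.

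Next, to make this directional behaviour rigorous I would exploit the sign of $K_2$. Since $\nu > 1$ gives $K_2(\bx) = (1-\nu)K(\bx) < 0$, one has $1 - (u+v)/K_2 = 1 + (u+v)/|K_2| \ge 1$, and multiplying by $r_2 = 1-\nu < 0$ reverses the inequality to $r_2\bigl(1 - (u+v)/K_2\bigr) \le r_2$. This bounds the $v$-equation from above by its linearization, $\partial_t v \le d_2 \Delta v + r_2 r(\bx) v$, whereas near the origin the $u$-equation is bounded below using a small-data threshold exactly as in Lemma \ref{lemma_12}. Integrating over $\Omega$ and invoking the Gr\"onwall inequality (Theorem \ref{th_A.4} from Appendix \ref{A}) in both directions, I expect $\int_\Omega u\, d\bx$ to grow exponentially for initial data with $u_0 + v_0 < \delta$ and $u_0 \not\equiv 0$ (using a positive constant $\rho$ built from $\inf_\Omega r_1 r(\bx)(1 - 2\delta/K_1(\bx))$ and $\delta \le \inf_\Omega K_1(\bx)/4$), while $\int_\Omega v\, d\bx \le e^{-|\varepsilon| t}\int_\Omega v_0\, d\bx \to 0$ with $|\varepsilon| = -\sup_\Omega r_2 r(\bx) > 0$. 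Hence $(0,0)$ is repelling in $u$ but attracting in $v$, so it is unstable yet cannot satisfy the second definition of Theorem \ref{th_A.1} from Appendix \ref{A}.

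Finally I would treat the boundary value $\nu = 1$ separately: there $r_2 = 0$ and the $v$-equation degenerates to $\partial_t v = d_2 \Delta v - r(\bx) v (u+v)/K(\bx)$, whose right-hand side is non-positive, so the same integration-plus-Gr\"onwall argument still forces $\int_\Omega v\, d\bx$ to decay, while instability persists through $\gamma_1 > 0$. The only genuine subtlety — and the one place I would take care — is tracking the sign reversals introduced by $r_2 \le 0$ and $K_2 \le 0$ so that each inequality points the correct way; aside from that bookkeeping, the analytic machinery (positivity from Theorem \ref{lemma_18.1}, the threshold constants, and Gr\"onwall) is identical to that of Lemma \ref{lemma_12}.
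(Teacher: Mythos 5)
Your proposal is correct and is precisely what the paper intends: its proof of this lemma is the one-line remark that the argument is analogous to Lemma \ref{lemma_12}, and you have carried out exactly that symmetry, swapping the roles of $u$ and $v$ (so $r_2=1-\nu\le 0$, $K_2\le 0$), with the eigenvalue computation giving $\gamma_1>0$ for instability, the sign-reversed comparison plus Gr\"onwall giving decay of $\int_\Omega v\,d\bx$ and growth of $\int_\Omega u\,d\bx$ for small data, and the boundary case $\nu=1$ treated separately, all matching the paper's template at the same level of rigor.
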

\begin{proof}
	The proof is analogous of Lemma \ref{lemma_12} by the second definition of Theorem \ref{th_A.1} from Appendix \ref{A}.
\end{proof}

In the following theorem we demonstrates that global asymptotic stability for the semi-trivial steady state $(u^*,0)$ of the model (\ref{equ_1}) when the harvesting rate satisfies $0\leq\mu < 1\leq\nu$ using Lemma \ref{lemma_18}, Lemma \ref{lemma_not use_1} and Lemma \ref{lemma_19}.
\begin{Th}\label{th_6}
	Let $0\leq\mu<1\leq\nu$. Thus the steady state $(u^*(\bx),0)$ of the model (\ref{equ_1}) is globally asymptotically stable.
	\end{Th}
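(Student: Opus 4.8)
The plan is to mirror the proof of Theorem \ref{th_5}, interchanging the roles of $u$ and $v$ along with the harvesting coefficients $\mu$ and $\nu$. The three structural ingredients needed are already established: Lemma \ref{lemma_18} rules out any coexistence steady state when $0\leq\mu<1\leq\nu$; Lemma \ref{lemma_not use_1} identifies $(u^*(\bx),0)$ as the only nontrivial nonnegative steady state; and Lemma \ref{lemma_19} shows that the trivial state $(0,0)$ is unstable, though not a repeller. Uniqueness of the surviving profile $u^*(\bx)$ is supplied by Lemma \ref{lemma_ex1} and Lemma \ref{lemma_ex2}.

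First I would invoke Theorem \ref{lemma_18.1} to fix the phase space: for $0\leq\mu<1\leq\nu$ the solution $(u(t,\bx),v(t,\bx))$ of (\ref{equ_1}) exists, is positive for $t>0$, and is trapped in the invariant rectangle
\begin{align*}
\mathbf{S}_\rho\equiv\left\{(u_1,v_1)\in C\left([0,\infty)\times\overline{\Omega}\right)\times C\left([0,\infty)\times\overline{\Omega}\right);\ 0\leq u_1\leq\rho_u,\ 0\leq v_1\leq\rho_v\right\},
\end{align*}
where now the bounds are the symmetric counterparts of those used for $\nu\geq 1$, namely
\begin{align*}
\rho_u=\max\left\{\sup_{\bx\in\Omega}u_0(\bx),\ \sup_{\bx\in\Omega}K(\bx)\right\},\qquad \rho_v=\max\left\{\sup_{\bx\in\Omega}v_0(\bx),\ 1\right\}.
\end{align*}
With coexistence excluded and the origin unstable, the unique semitrivial equilibrium $(u^*(\bx),0)$ is the only admissible limit inside $\mathbf{S}_\rho$, so I would then apply Theorem \ref{th_A.3} from Appendix \ref{A} to conclude that every trajectory launched from nontrivial nonnegative data in $\mathbf{S}_\rho$ converges uniformly on $\overline{\Omega}$ to $(u^*(\bx),0)$, which is exactly global asymptotic stability.

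The step I expect to require the most care is reconciling the ``unstable but not repelling'' behaviour of $(0,0)$ from Lemma \ref{lemma_19} with the convergence claim. One must verify that the direction along which the origin attracts is precisely the $v$-component, which is already condemned to extinction by $\nu\geq 1$, so that no trajectory with $u_0\not\equiv 0$ can be drawn into the origin; this is what guarantees that the limit is $(u^*(\bx),0)$ rather than $(0,0)$. Once this compatibility check is done, the remainder is a verbatim transcription of the argument for Theorem \ref{th_5}.
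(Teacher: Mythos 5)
Your proposal is correct and follows essentially the same route as the paper's own proof: it assembles Lemma \ref{lemma_19}, Lemma \ref{lemma_18}, and Lemma \ref{lemma_not use_1} to isolate $(u^*(\bx),0)$ as the unique remaining nonnegative steady state and then invokes Theorem \ref{th_A.3} to obtain convergence from any data in $\mathbf{S}_\rho$. Your explicit symmetric swap of the bounds $\rho_u,\rho_v$ and the remark that the origin attracts only in the $v$-direction are reasonable clarifications the paper leaves implicit, but they do not change the argument.
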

\begin{proof}
 Lemma \ref{lemma_19} shows that the solution $(0,0)$ is unstable. At the same, Lemma \ref{lemma_18} shows that there is no coexistence solution. The remaining non-negative steady state is the solution $(u^*(\bx),0)$, see Lemma \ref{lemma_not use_1}. 
 Now, utilizing Theorem \ref{th_A.3} from Appendix \ref{A}, we
see that the time-dependent solution $(u(t, \bx), v(t, \bx))$ of (\ref{equ_3.8}) as well as (\ref{equ_1}) with $\nu>1$ will converge to the unique steady state $( u^*(\bx),0)$ for any initial condition from $\mathbf{S}_\rho$. The proof is complete.
\end{proof}
\subsection{When both harvesting rate transcending intrinsic growth rate}
In this section, we examine the case when both harvesting rates transcending intrinsic growth rates namely $\mu,\nu\geq 1$.

\subsubsection{Case \texorpdfstring{$\mu,\nu\geq 1$}{Lg}}\label{3.5}
In the following theorem we demonstrates that global asymptotic stability for the steady state $(0,0)$ using Lemma \ref{lemma_12} from Subsection \ref{3.3} (or, Lemma \ref{lemma_19} from Subsection \ref{3.4} ).

\begin{Th}\label{th_7}
	Let $ \mu, \nu\geq 1$. Thus the trivial solution $(0, 0)$ of the model (\ref{equ_1}) is globally asymptotically stable.
	\end{Th}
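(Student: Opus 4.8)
The plan is to reduce global asymptotic stability of $(0,0)$ to two facts: that $(0,0)$ is the \emph{only} nonnegative steady state of the system when $\mu,\nu\ge 1$, and that every nonnegative trajectory converges to it. Both facts mirror the arguments already used in Subsections \ref{3.3}--\ref{3.4}, now applied symmetrically to both components rather than to a single one.

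First I would establish uniqueness of the steady state. Suppose $(u_s,v_s)$ is any nonnegative stationary solution of (\ref{equ_3.8}). Integrating the $u_s$-equation over $\Omega$ and using the Neumann boundary condition gives $\int_\Omega r(\bx)u_s\left(1-\mu-\frac{u_s+v_s}{K}\right)d\bx=0$. Since $\mu\ge 1$ forces $1-\mu\le 0$ while $\frac{u_s+v_s}{K}\ge 0$, the integrand is pointwise $\le 0$, so it must vanish identically; as $r>0$ this forces $u_s\equiv 0$. The same computation on the $v_s$-equation with $\nu\ge 1$ gives $v_s\equiv 0$. This single argument simultaneously rules out coexistence and both semitrivial states, so it strengthens and subsumes Lemma \ref{lemma_11} and Lemma \ref{lemma_18}; hence $(0,0)$ is the unique nonnegative steady state.

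For the convergence I would reuse the Gr\"onwall mechanism of Lemma \ref{lemma_12} on \emph{each} equation. For $\mu>1$ the reaction satisfies $r(\bx)u\left(1-\mu-\frac{u+v}{K}\right)\le r(\bx)(1-\mu)u\le\varepsilon_1 u$ with $\varepsilon_1:=\sup_{\bx\in\Omega}r(\bx)(1-\mu)<0$; integrating over $\Omega$ and applying Theorem \ref{th_A.4} yields $\int_\Omega u(t,\bx)\,d\bx\le e^{\varepsilon_1 t}\int_\Omega u_0\,d\bx\to 0$. By the symmetric estimate with $\varepsilon_2:=\sup_{\bx\in\Omega}r(\bx)(1-\nu)<0$ for $\nu>1$, also $\int_\Omega v(t,\bx)\,d\bx\to 0$. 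Thus both species' masses decay exponentially, which is precisely Lemma \ref{lemma_12} (equivalently Lemma \ref{lemma_19}) applied in both coordinates at once.

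Finally, to obtain genuine global asymptotic stability from $\mathbf{S}_\rho$, I would invoke Theorem \ref{th_A.3} with the constant supersolution $(\rho_u,\rho_v)$ from (\ref{equ_s}) and the subsolution $(0,0)$: since $(0,0)$ is the unique stationary solution sandwiched between them, the monotone iteration forces $(u(t,\bx),v(t,\bx))\to(0,0)$ uniformly on $\overline{\Omega}$ for every initial datum in $\mathbf{S}_\rho$. The boundary cases $\mu=1$ or $\nu=1$ are the main obstacle, because there $\varepsilon_1$ or $\varepsilon_2$ vanishes and the linear Gr\"onwall estimate no longer forces decay; here I would follow the $\mu=1$ reduction of Lemma \ref{lemma_12}, where the equation becomes $\partial_t u=d_1\Delta u-r(\bx)u\frac{u+v}{K}$, and exploit the strictly negative quadratic self-limitation---for instance by tracking the total mass $M(t)=\int_\Omega(u+v)\,d\bx$, for which $1-\mu,\,1-\nu\le 0$ give $M'(t)\le-\big(\inf_\Omega \tfrac{r}{K}\big)\int_\Omega(u+v)^2\,d\bx\le -c\,M(t)^2$ by Cauchy--Schwarz, so $M(t)\to 0$. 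Combined with the confinement of the orbit to $\mathbf{S}_\rho$ (Theorem \ref{lemma_10}) and Theorem \ref{th_A.3}, this shows $(0,0)$ is globally asymptotically stable.
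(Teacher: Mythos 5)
Your proof is correct, and its engine --- integrating each equation over $\Omega$, killing the diffusion terms by the Neumann condition, and applying Gr\"onwall (Theorem \ref{th_A.4}) to get exponential decay of both masses when $\mu,\nu>1$ --- is exactly what the paper intends: its entire proof of Theorem \ref{th_7} is a one-line citation of Lemma \ref{lemma_12} (equivalently Lemma \ref{lemma_19}) applied to both components. Where you genuinely add value is in closing two gaps the paper leaves open. First, the borderline case $\mu=1$ or $\nu=1$: there $\varepsilon_1$ or $\varepsilon_2$ vanishes and the linear Gr\"onwall estimate gives nothing, and the paper's Lemma \ref{lemma_12} merely asserts that the $\mu=1$ case ``follows the same procedures'' without carrying it out; your quadratic mass inequality $M'(t)\le -c\,M(t)^2$ (valid since $1-\mu,1-\nu\le 0$, with $c=\inf_{\overline\Omega}(r/K)/|\Omega|>0$ by Cauchy--Schwarz) yields the algebraic decay $M(t)\le M(0)/(1+cM(0)t)\to 0$ and is a clean, genuinely new argument for that case. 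Second, your steady-state uniqueness step (integrating the stationary equations to force $u_s\equiv v_s\equiv 0$) subsumes Lemmas \ref{lemma_11} and \ref{lemma_18} in one stroke, and your final invocation of Theorem \ref{th_A.3} on $\mathbf{S}_\rho$ mirrors precisely how the paper upgrades to global asymptotic stability in Theorems \ref{th_5} and \ref{th_6}; this matters because the Gr\"onwall step alone only gives $L^1$ decay of the masses, not uniform convergence, so the sandwich/uniqueness argument is what legitimately finishes the proof. Two small points worth spelling out: at $\mu=1$ the vanishing integrand gives $u_s(u_s+v_s)\equiv 0$, which still forces $u_s\equiv 0$ pointwise since both terms are nonnegative; and both $\varepsilon_1,\varepsilon_2<0$ and $c>0$ require $\inf_{\overline\Omega} r>0$, i.e.\ the standing hypothesis $r>0$ on $\overline\Omega$ that the paper itself uses to define $\varepsilon$ in Lemma \ref{lemma_12}.
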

\begin{proof}
	The reasoning from the proof of Lemma \ref{lemma_12} (or, Lemma \ref{lemma_19}) applies here directly and it shows convergence to the trivial solution. The proof is complete.
\end{proof}

\section{Numerical results}\label{Numerical-experiments}
In this section, we represent  numerical experiments using finite element method to support the theoretical results.  {The usual $L^2(\Omega)$ inner product are denoted by  $(.,.)$. We define the Hilbert space for our problem as $$X:=H^1(\Omega)=\big\{u\in L^2(\Omega):\nabla u\in L^2(\Omega)^{n}\big\}.$$
The conforming finite element space is denoted by $X_h\subset X$, and we assume a
regular triangulation $\tau_h(\Omega)$, where $h$ is the maximum triangle diameter. We consider the following fully-discrete, decoupled and linearized scheme of the system \eqref{equ_1}:}

 {\begin{algorithm}[H]\label{Algn1}
  \caption{Fully discrete and decoupled ensemble scheme} Given time-step $\Delta t>0$, end time $T>0$, initial conditions $u^0$, $v^0\in X_h$. Set $M=T/\Delta t$ and for $n=1,\cdots\hspace{-0.35mm},M-1$, compute:
 Find $u_{h}^{n+1}\in X_h$ satisfying, for all $\chi_h\in X_h$:
\begin{align}
    \left(\frac{u_{h}^{n+1}-u_{h}^{n}}{\Delta t},\chi_h\right)=-d_1\big(\nabla u_{h}^{n+1},\nabla \chi_h\big)&+\bigg(r(\bx) u_{h}^{n+1} \left(1-\frac{u_{h}^n+v_{h}^n}{K(\bx)}\right), \chi_h\bigg)\nonumber\\&-\left(\mu r(\bx) u_{h}^{n+1},\chi_h\right). \label{disweak1}
\end{align}
Find $v_{h}^{n+1}\in X_h$ satisfying, for all $l_h\in X_h$:
\begin{align}
    \left(\frac{v_{h}^{n+1}-v_{h}^{n}}{\Delta t},l_h\right)=-d_2\big(\nabla v_{h}^{n+1},\nabla l_h\big)&+\bigg( r(\bx) v_{h}^{n+1} \left(1-\frac{u_{h}^n+v_{h}^n}{K(\bx)}\right), l_h\bigg)\nonumber\\&-\left(\nu r(\bx) v_{h}^{n+1},l_h\right).\label{disweak2}
\end{align}
\end{algorithm}}

{For all experiments, we consider the diffusion coefficients $d_{1} = d_{2}=1$, a unit square domain $\Omega=(0,1)\times(0,1)$, $P_2$ finite element, and structured triangular meshes. We define the energy of the system at time $t$ for the species density $u$, and $v$ as $$\frac12\int_\Omega u^2(t,\bx)d\bx,\hspace{1mm}\text{and}\hspace{1mm}\frac12\int_\Omega v^2(t,\bx)d\bx,$$ respectively. The 2D code is written in Freefem++ \cite{hecht2012new}.}

{\subsection{Stationary carrying capacity}
In this section, we will consider stationary carry capacity together with both constant and space-dependent intrinsic growth rates.
\subsubsection{Experiment 1: Constant intrinsic growth rate}
In this experiment, we consider the carrying capacity of the system $$K(\bx)\equiv 2.1+\cos(\pi x)\cos(\pi y),$$ and a constant intrinsic growth rate $r(\bx)\equiv 1.2$. We run several simulations for various values of the harvesting coefficients $\mu$, and $\nu$. In Figures \ref{exp-1-mu-1-5-nu-0-08}-\ref{energy-mu-0-0009-nu-varies-bet-0-0005-to-0-0025}, we considered the initial population densities $u_0=v_0=1.8$ with time-step size $\Delta t=0.1$.
\begin{figure} [H]
		\centering
		\subfloat[]{\includegraphics[scale=.18]{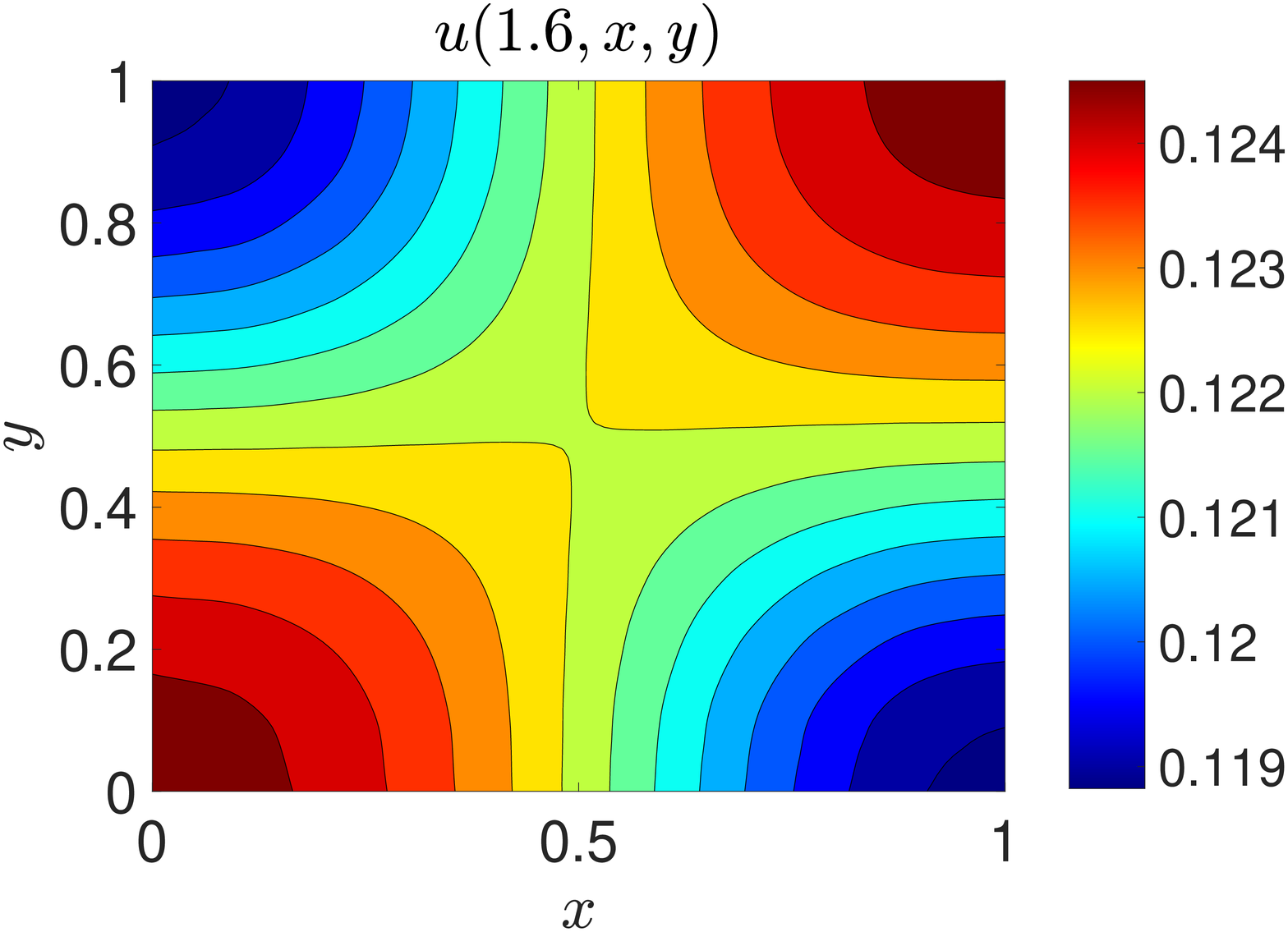}}
		\subfloat[]{\includegraphics[scale=.18]{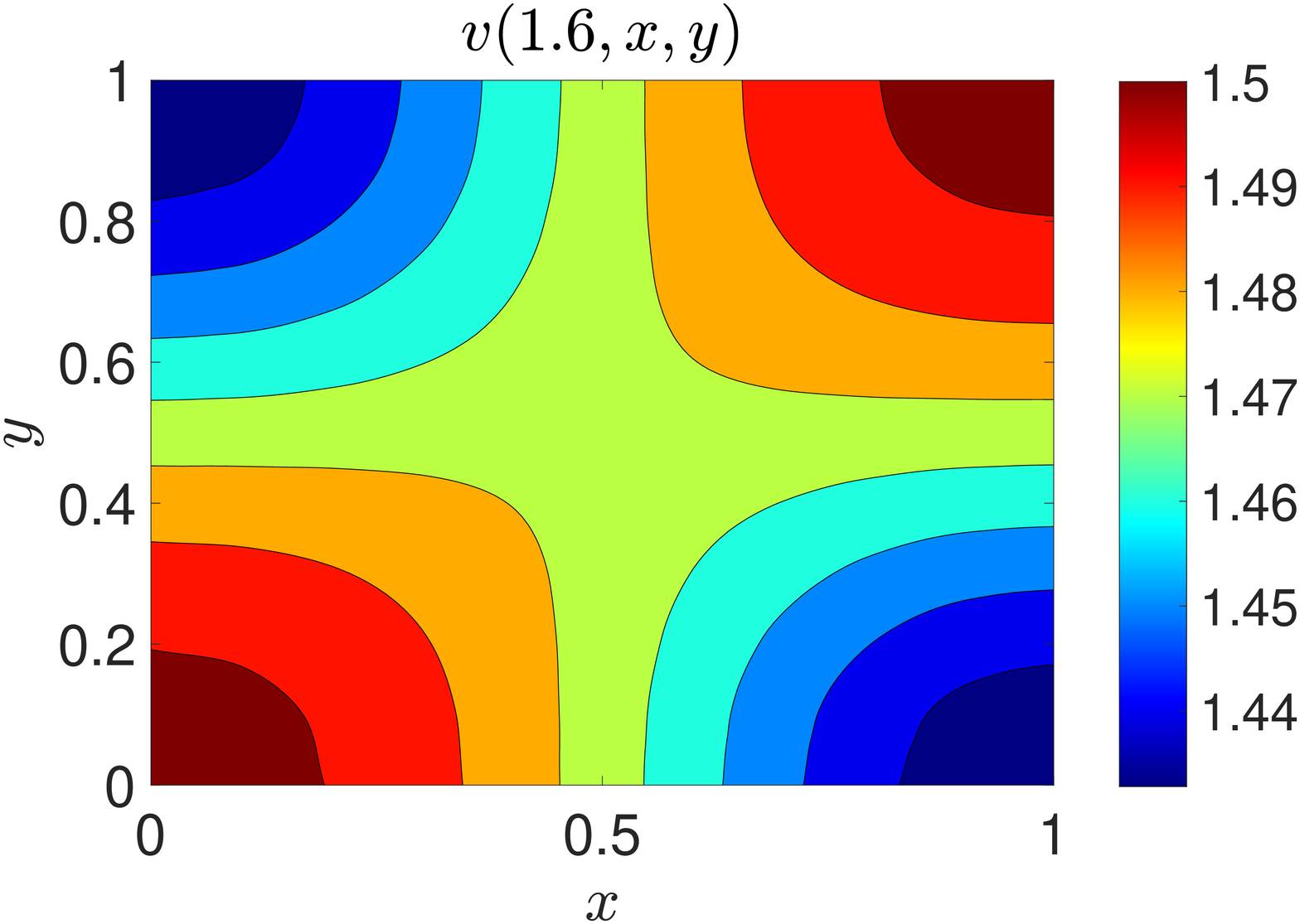}}
		\caption{Population density (a) $u(t,\bx)$, and (b) $v(t,\bx)$ at time $t=1.6$ with the harvesting coefficients $\mu=1.5$, and $\nu=0.08$.}
		\label{exp-1-mu-1-5-nu-0-08}
	\end{figure}
	In Figure \ref{exp-1-mu-1-5-nu-0-08}, we represent the contour plot of the species density $u$, and $v$ at time $t=1.6$ with fixed harvesting coefficients $\mu=1.5$, and $\nu=0.08$. A co-existence is observed at the moment.}
	
{We also plot the energy of the system for the species density $u$, and $v$ versus time for three different combinations of the harvesting coefficient pairs $(\mu,\nu)$ in Figure \ref{energy-mu-nu-varies-bet-1-5-to-0-08}. We consider the harvesting parameter $\mu=1.5>\nu=0.08$ in Figure \ref{energy-mu-nu-varies-bet-1-5-to-0-08}(a) and thus observe the species $u$ dies away shortly but the species $v$ survives. A opposite scenario is observed in Figure \ref{energy-mu-nu-varies-bet-1-5-to-0-08}(b) where $\mu=0.08<\nu=1.15$ is considered. This is because one harvesting coefficient is significantly bigger than the other and exceeds the intrinsic growth rate, that is why one species extincts in a short period of time. The results in Figure \ref{energy-mu-nu-varies-bet-1-5-to-0-08} (a), and Figure \ref{energy-mu-nu-varies-bet-1-5-to-0-08} (b) support the Theorem \ref{lemma_10}, and the Theorem \ref{lemma_18.1}, respectively.  In Figure \ref{energy-mu-nu-varies-bet-1-5-to-0-08}(c), though the harvesting coefficients are the same ($\mu=\nu=1.5$) both exceeds the intrinsic growth rate and thus an extinction in both species is observed in short-time evolution. 
\begin{figure} [H]
		\centering
		\subfloat[]{\includegraphics[scale=.15]{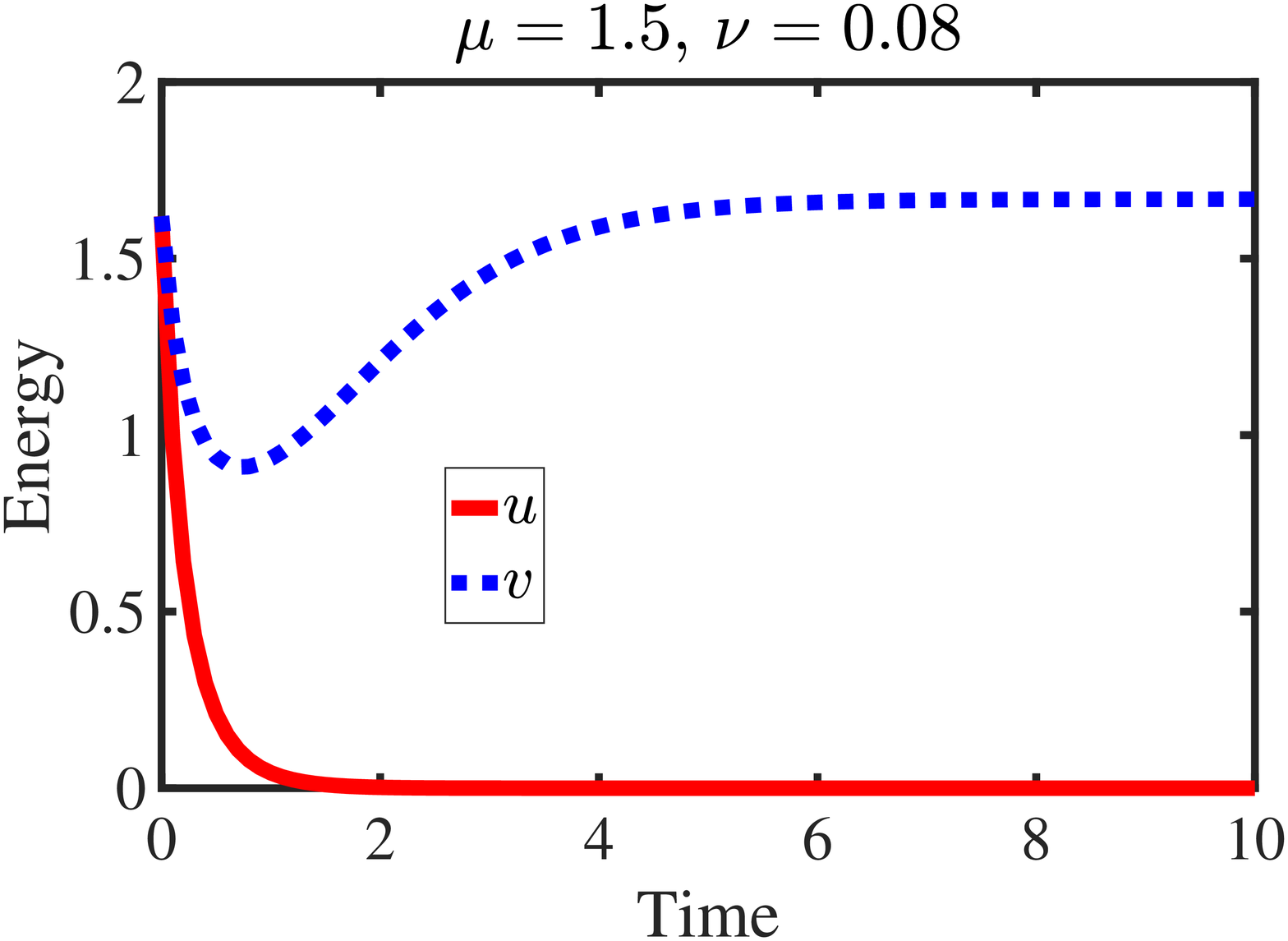}}
		\subfloat[]{\includegraphics[scale=.15]{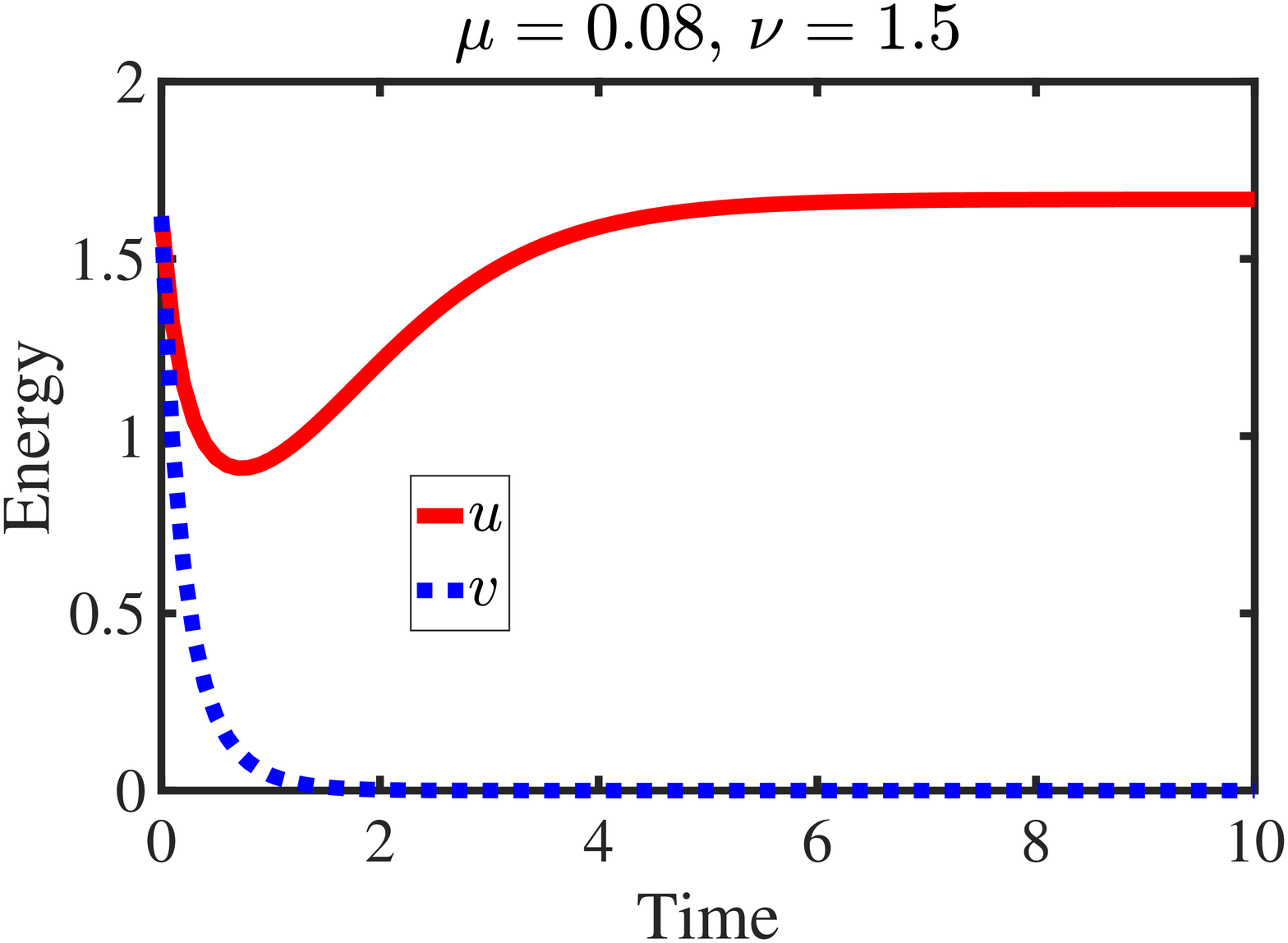}}
		\subfloat[]{\includegraphics[scale=.15]{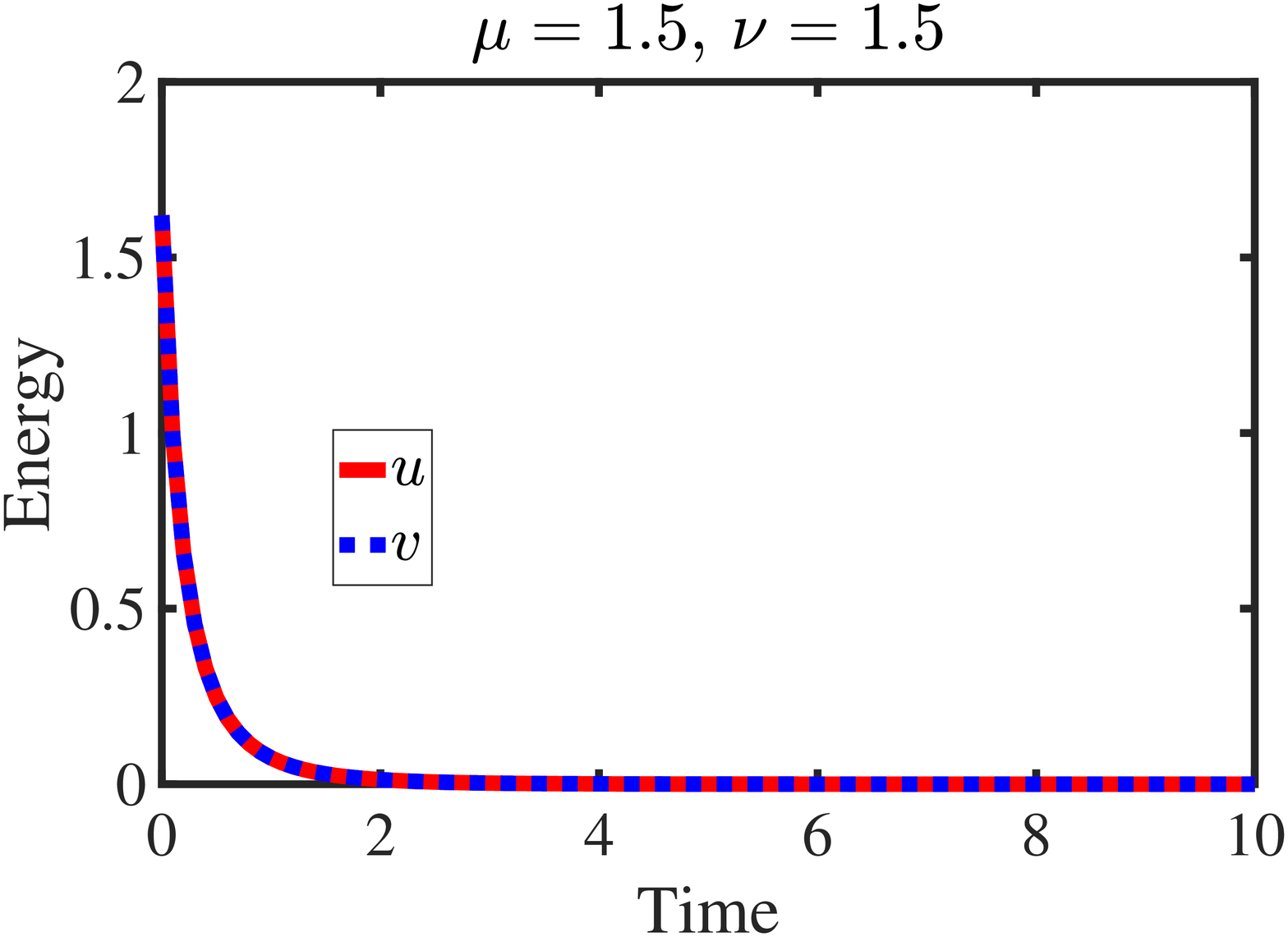}}
		\caption{Evolution of system energy for species density $u$, and $v$ with (a) $\mu=1.5$, and  $\nu=0.08$, (b) $\mu=0.08$, and $\nu=1.5$, and (c) $\mu=1.5$, and $\nu=1.5$.}
		\label{energy-mu-nu-varies-bet-1-5-to-0-08}
	\end{figure}
	\begin{figure}[H]
		\centering
		\subfloat[]{\includegraphics[scale=.16]{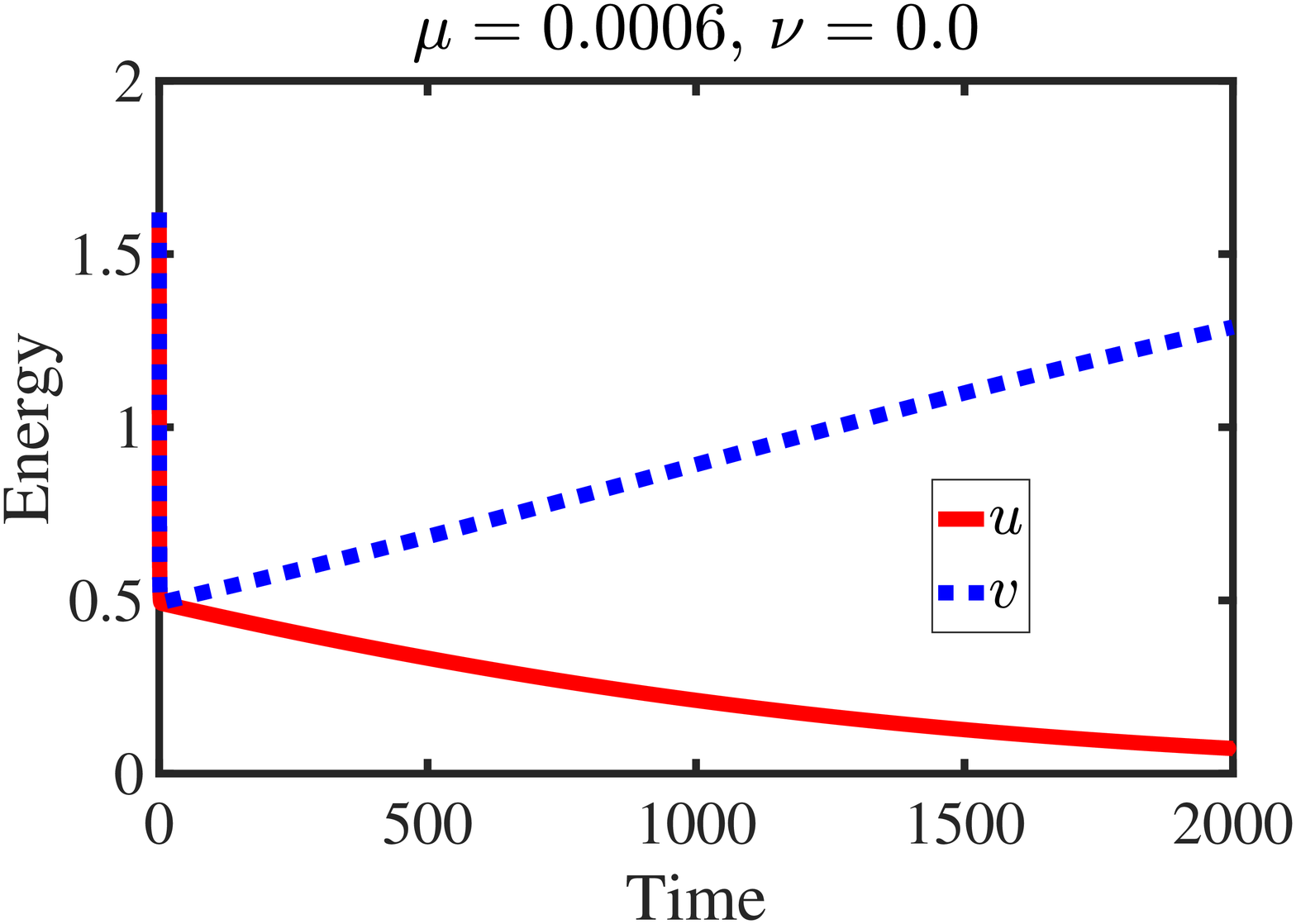}}
		\subfloat[]{\includegraphics[scale=.16]{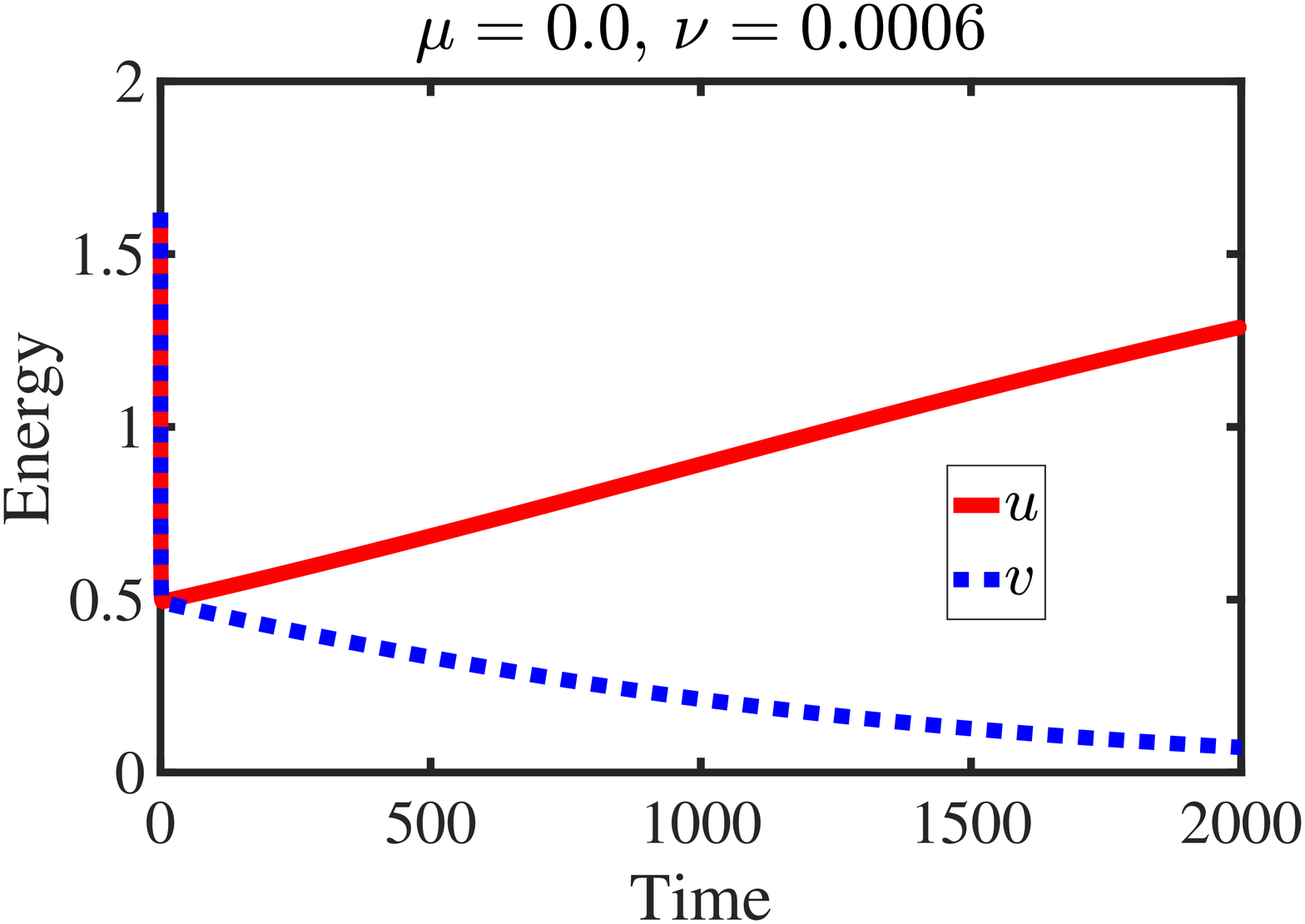}}
		\caption{{System energy for the species density $u$, and $v$ versus time for (a) $\mu=0.0006$, and $\nu=0.0$, and (b) $\mu=0.0$, and $\nu=0.0006$.}}\label{nu-0-0006-mu-0-0}
	\end{figure}
In Figure \ref{nu-0-0006-mu-0-0}, we plot the energy of the system  versus time corresponding to the species density $u$, and $v$ with the coefficients of harvesting (a) $\mu=0.0006$, and $\nu=0.0$, and (b) $\mu=0.0$, and $\nu=0.0006$. We observe the harvesting impact as an extinction of the species $u$ in (a), and the species $v$ in (b).}

{In Figure \ref{energy-mu-0-0009-nu-varies-bet-0-0005-to-0-0025}, we plot the energy of the system corresponding to the both species versus time keeping fixed the harvesting parameter $\mu=0.0009$ but varies $\nu$. We run the simulation until $t=2000$ for each cases. In Figure \ref{energy-mu-0-0009-nu-varies-bet-0-0005-to-0-0025} (a), since $\mu>\nu$, as time grows, the species density for $v$ remains always bigger than that for $u$, whereas, the scenario is opposite in Figures \ref{energy-mu-0-0009-nu-varies-bet-0-0005-to-0-0025} (b)-(f) because of $\mu<\nu$. A possible co-existence is exhibited in Figure \ref{energy-mu-0-0009-nu-varies-bet-0-0005-to-0-0025} (b), which supports the Theorem \ref{th_1}.}

\begin{figure} [H]
		\centering
		\subfloat[]{\includegraphics[scale=.15]{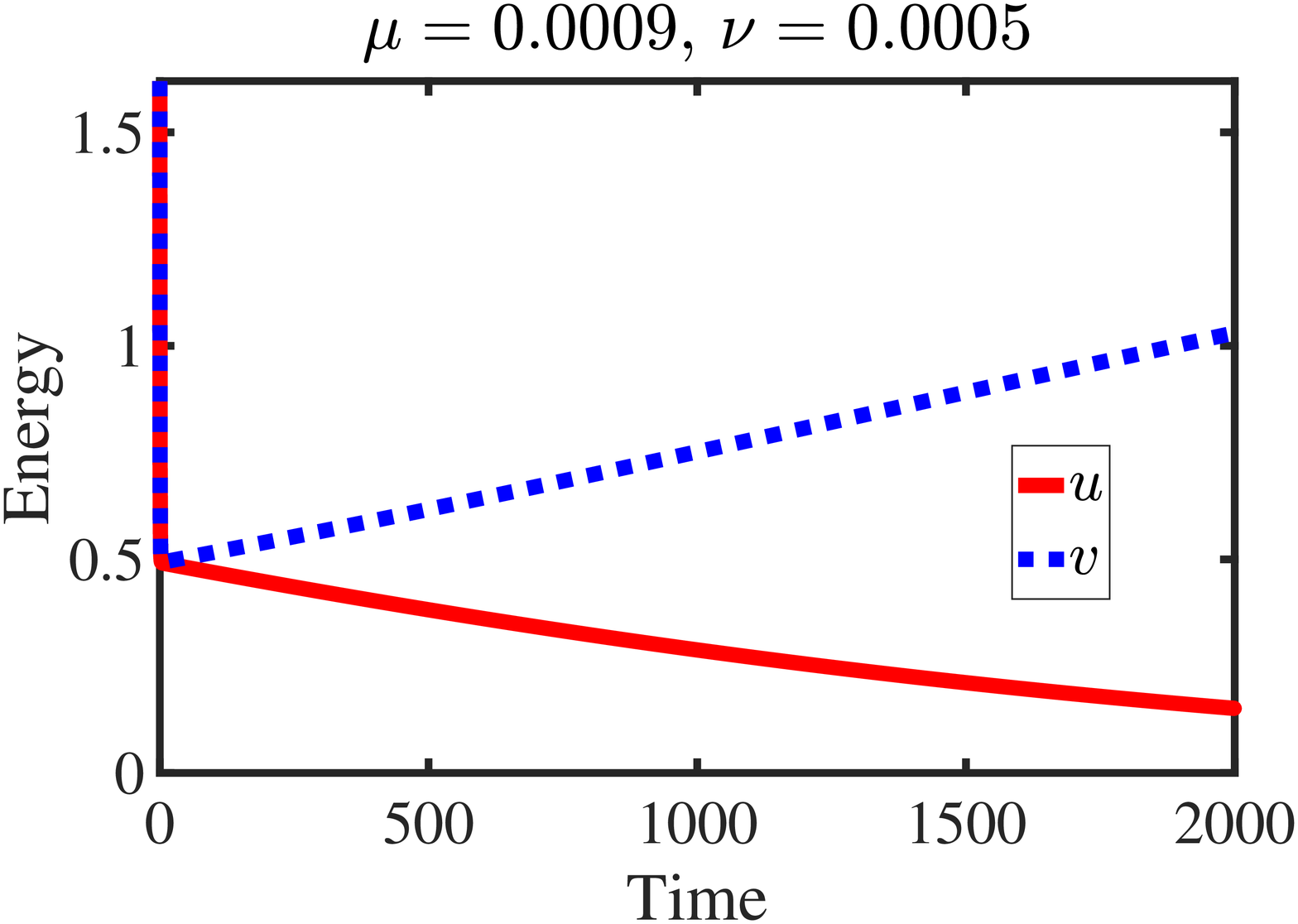}}
		\subfloat[]{\includegraphics[scale=.15]{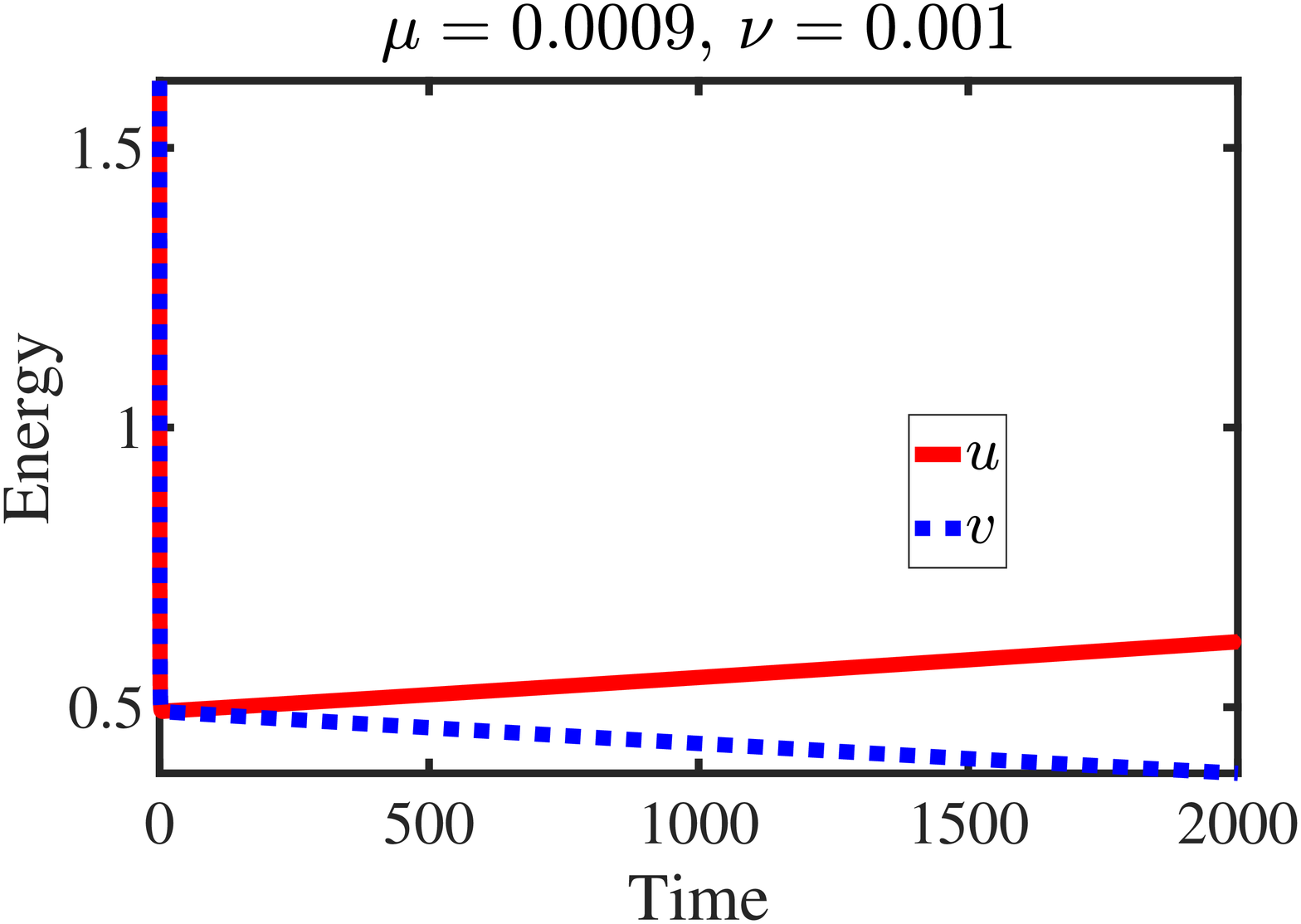}}\subfloat[]{\includegraphics[scale=.15]{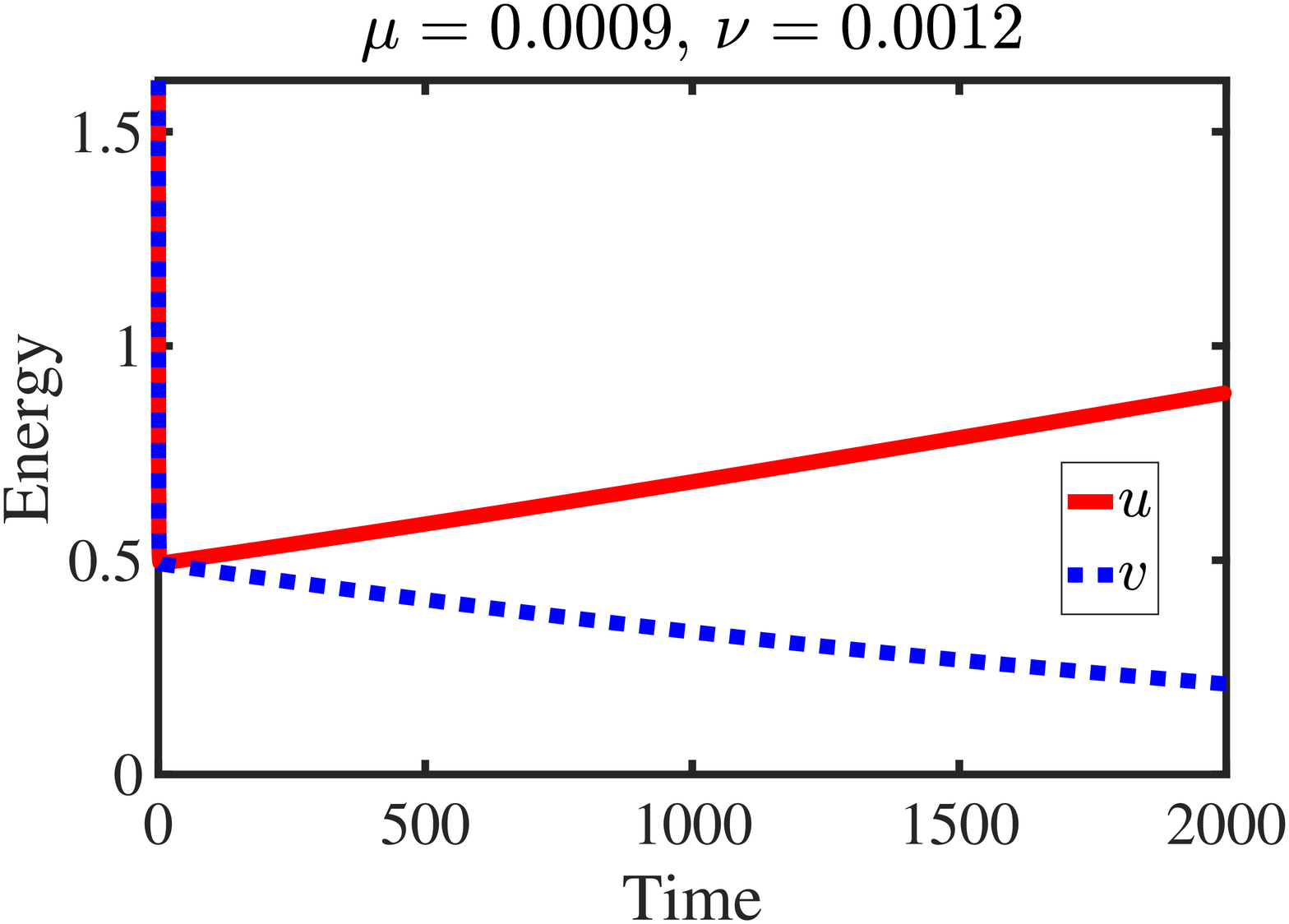}}~~\\\vspace{-2ex}
		\subfloat[]{\includegraphics[scale=.15]{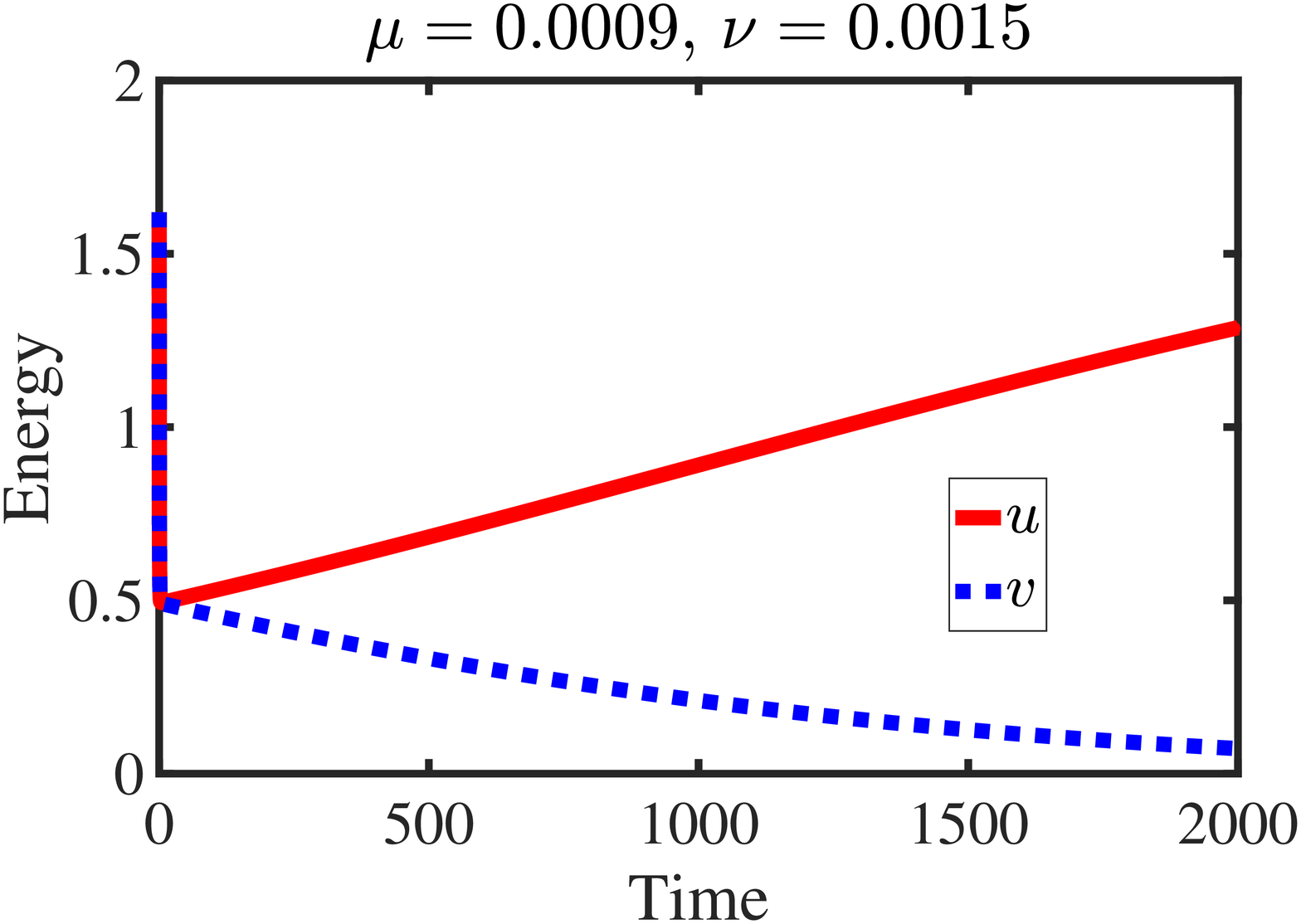}}
		\subfloat[]{\includegraphics[scale=.15]{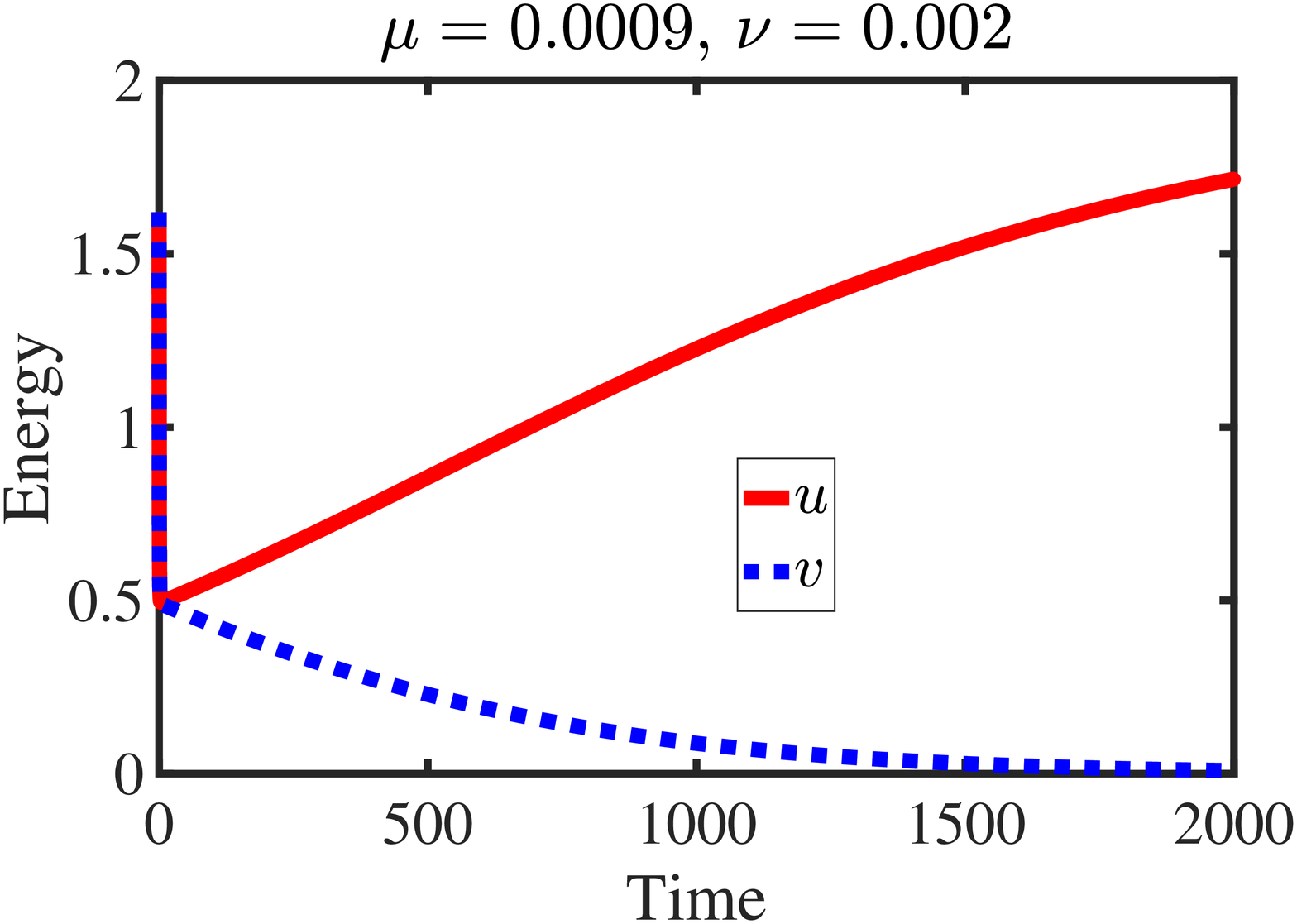}}
		\subfloat[]{\includegraphics[scale=.15]{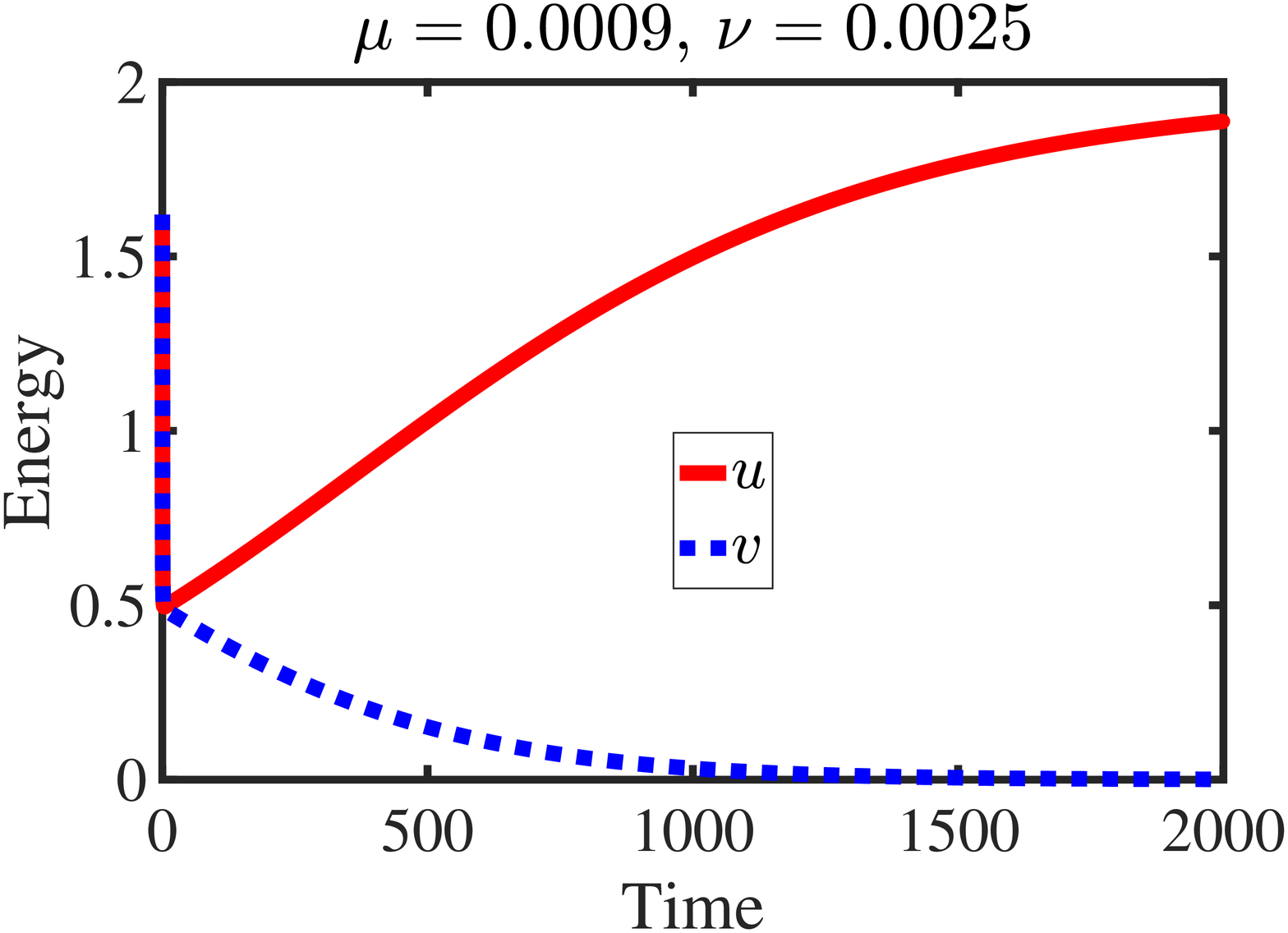}}~~
		\caption{Evolution of system energy for species density $u$, and $v$ with $\mu=0.0009$, (a) $\nu=0.0005$, (b)  $\nu=0.001$, (c)  $\nu=0.0012$, (d) $\nu=0.0015$, (e) $\nu=0.002$, and (f) $\nu=0.0025$.}
		\label{energy-mu-0-0009-nu-varies-bet-0-0005-to-0-0025}
	\end{figure}
	
	\begin{figure} [H]
		\centering
		\includegraphics[scale=.16]{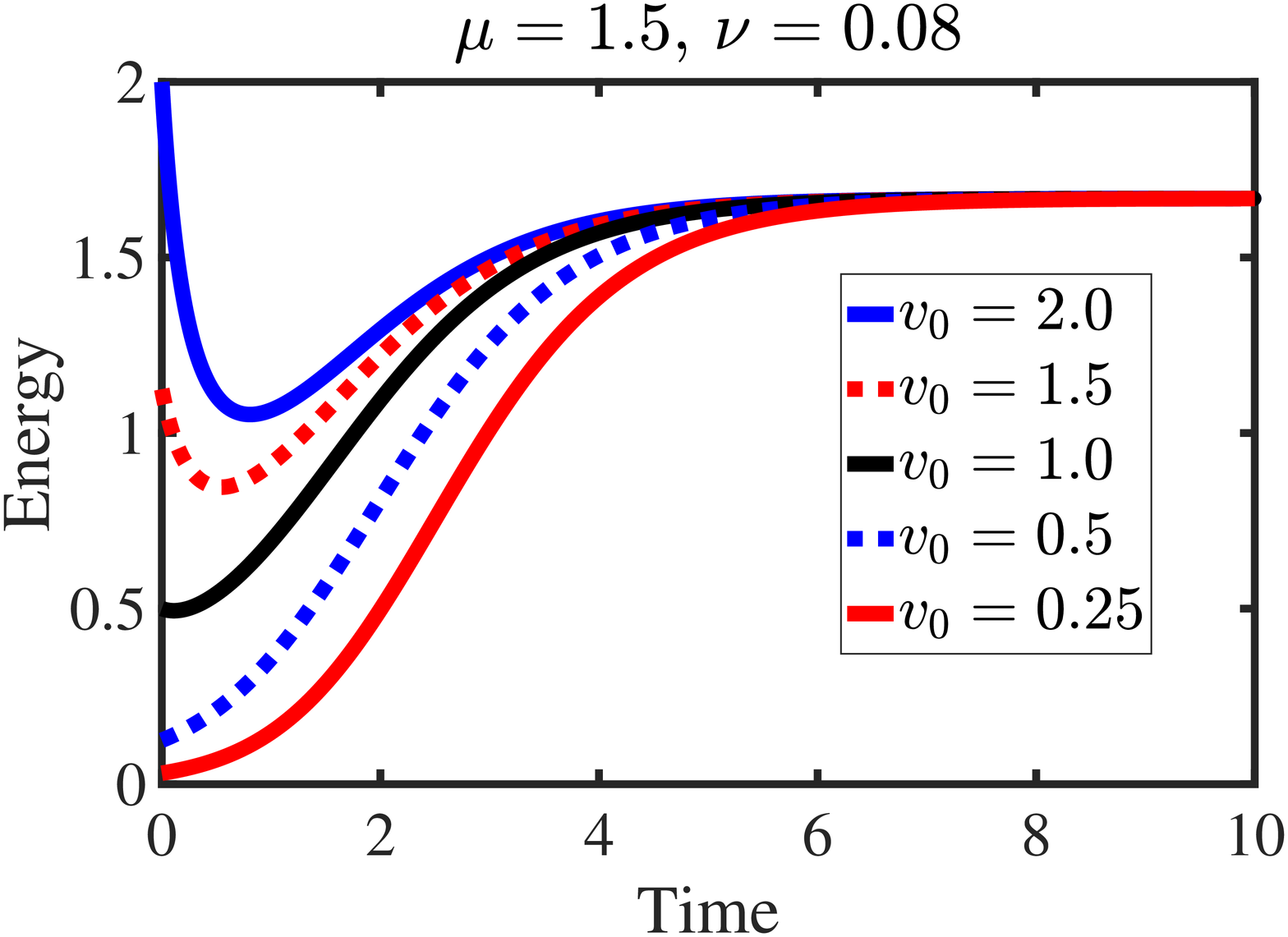}
		\caption{Stable solution $v(t,\bx)$.}
		\label{stability-v}
	\end{figure}
	{In Figure \ref{stability-v}, we plot the population density $v(t,\bx)$ versus time for various values of the initial condition $v_0$. In all the cases, we consider the initial densities for both species same, and omitted the results for $u$. We observe a unique solution as time grows if the initial conditions are positive.}

\subsubsection{Experiment 2: Space dependent intrinsic growth rate}
{In this experiment, we consider the carrying capacity, and the intrinsic growth rate as $$K(\bx)\equiv 2.5+\sin(x)\sin(y),\hspace{1mm}\text{and}\hspace{1mm}r(\bx)\equiv 1.5+\cos(x)\cos(y),$$
respectively, along with the equal initial population densities $u_0=v_0=1.2$. The system energy versus time is plotted until $t=3000$ in Figures \ref{var-r-space-depn} (a)-(b) for two different harvesting parameters pairs.}
\begin{figure} [H]
		\centering
		\subfloat[]{\includegraphics[scale=.165]{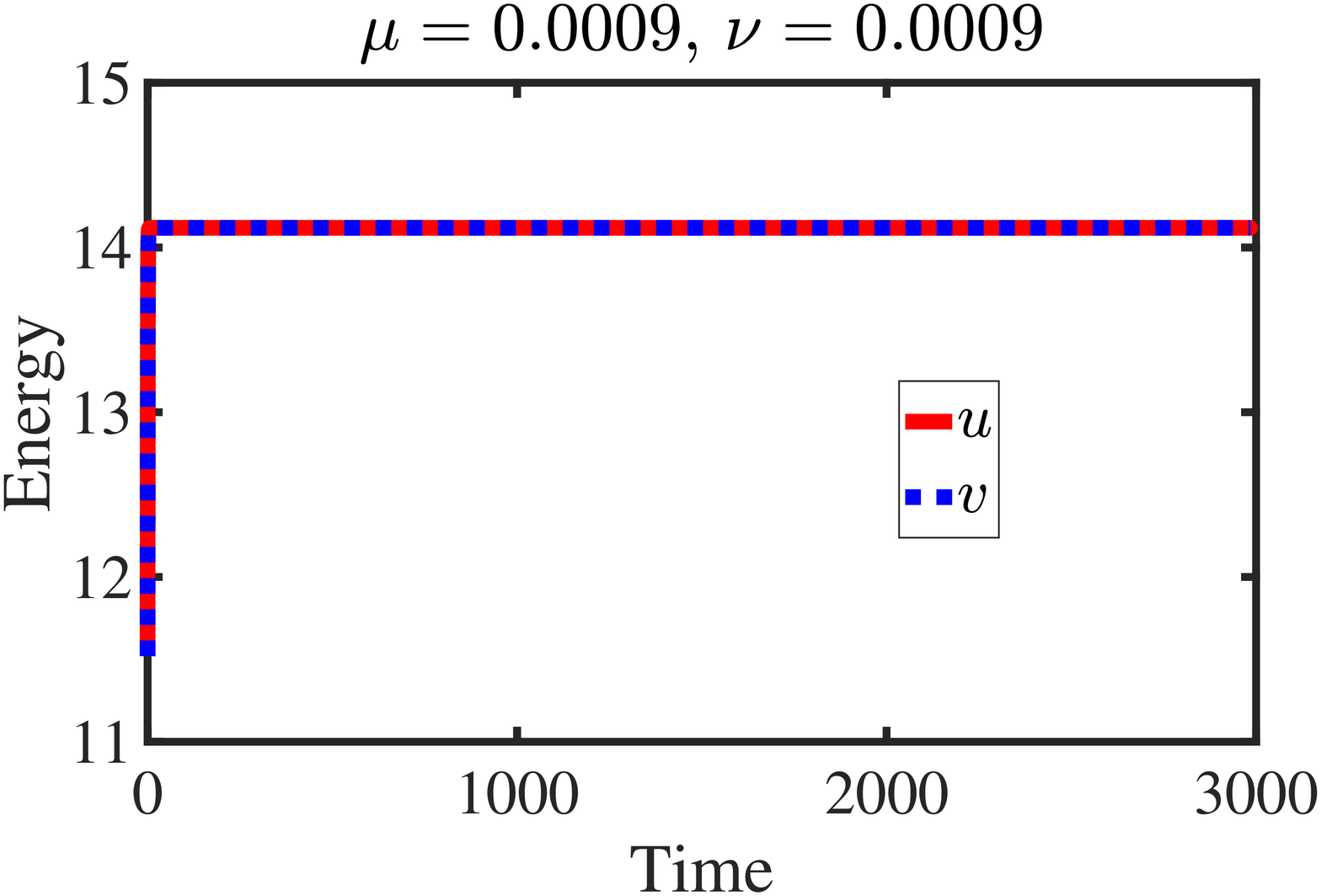}}
		\subfloat[]{\includegraphics[scale=.165]{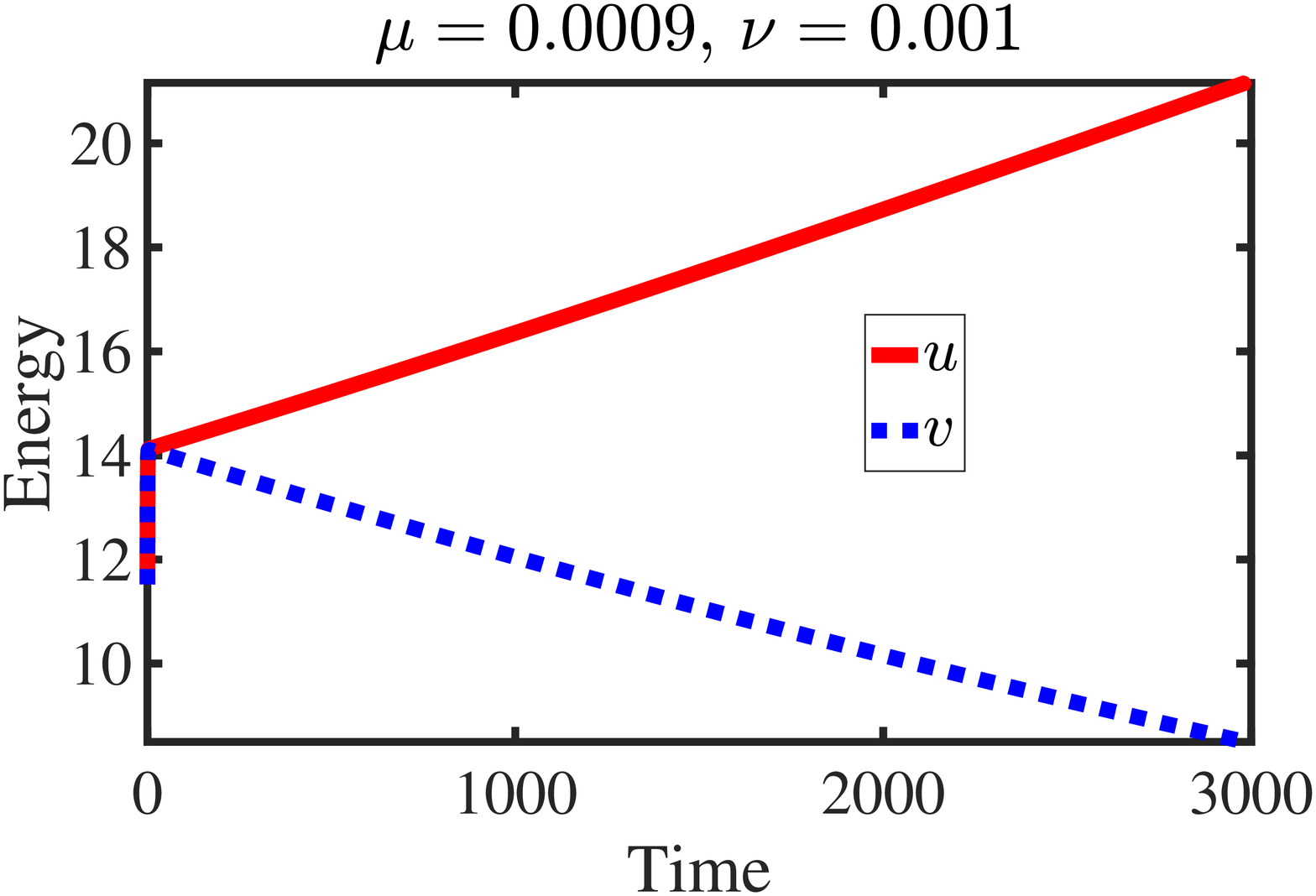}}
		\caption{Evolution of system energy for species density $u$, and $v$ with  (a) $\mu=0.0009$, and $\nu=0.0009$, and (b) $\mu=0.0009$, and $\nu=0.001$.}\label{var-r-space-depn}
	\end{figure}
{From Figure \ref{var-r-space-depn} (a), we observe, when the harvesting parameters do not exceed the intrinsic growth rate, a non-trivial solution exists. This means, the co-existence of the two species. In Figure \ref{var-r-space-depn} (b), we observe the population density of the first species remains always bigger than the second species as $\mu<\nu$. Ultimately, the second species will die out because of the competition between them.}

\subsection{Non-stationary carrying capacity}
In this section, we consider time-dependent periodic system carrying capacity together with constant and time-dependent intrinsic growth rates.
\subsubsection{Experiment 3: Constant intrinsic growth rate}
In this experiment, we consider a time-dependent carrying capacity $$K(t,\bx)\equiv (2.1+\cos(\pi x)\cos(\pi y))(1.1+\cos(t)),$$ harvesting coefficients $\mu=0.0009$, and $\nu=0.0025$, intrinsic growth rate $r(\bx)\equiv 1.0$, initial conditions $u_0=0.5$, and $v_0=1.5$ for the species $u$, and $v$, respectively. We have fixed time $t=T=13.74$, and draw the contour plots at $t=T, T+\pi/2, T+\pi, T+3\pi/2$, and $T+2\pi$, for the species density $u$, and $v$ in Figure \ref{contour-u-mu-0-0009-nu-0-0025}, and Figure \ref{contour-v-mu-0-0009-nu-0-0025}, respectively. From Figures \ref{contour-u-mu-0-0009-nu-0-0025}-\ref{contour-v-mu-0-0009-nu-0-0025}, we observe a quasi periodic behavior in both species and their co-existence.

\begin{figure} [H]
		\centering
		\subfloat[]{\includegraphics[scale=.14]{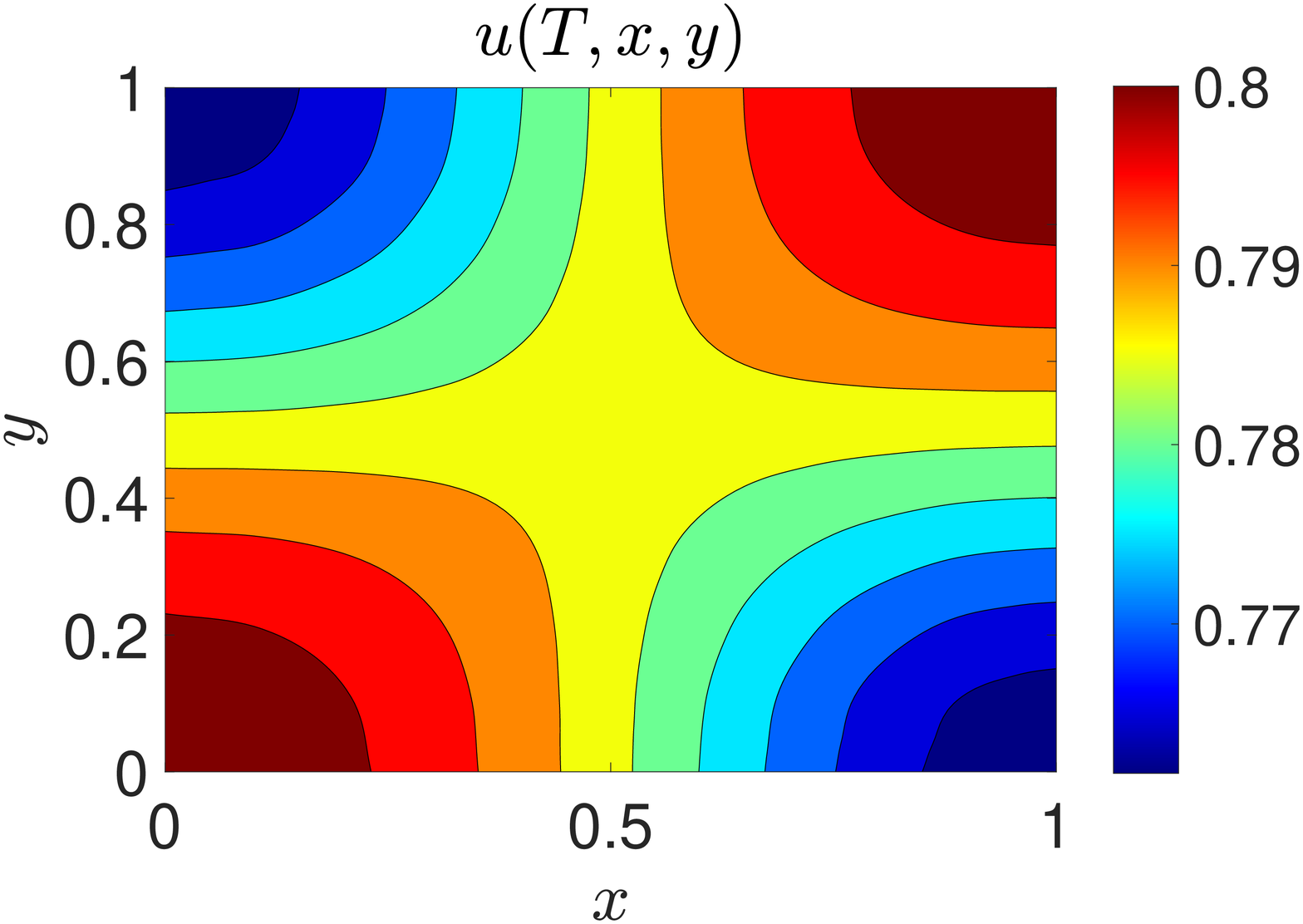}}
		\subfloat[]{\includegraphics[scale=.14]{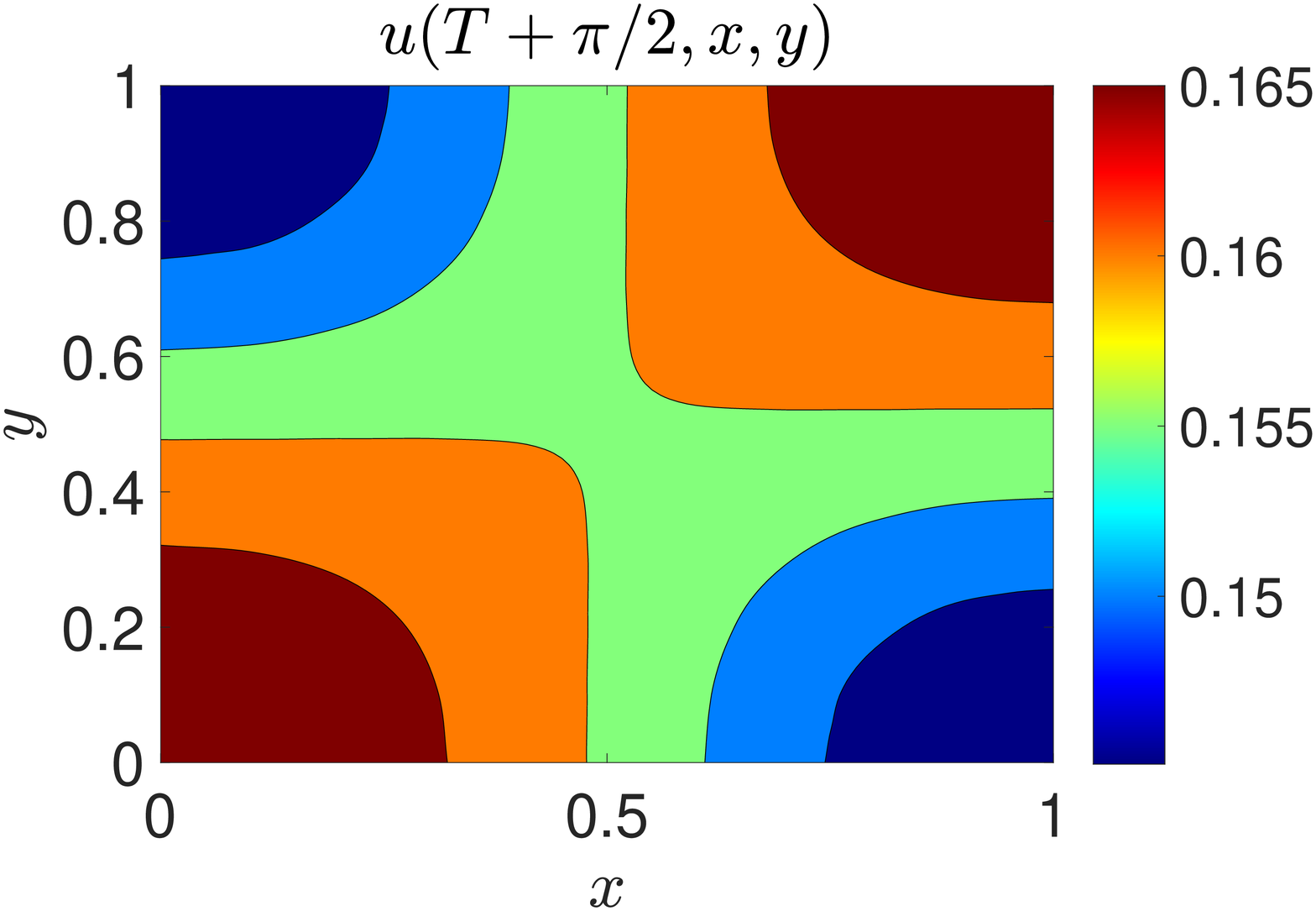}}\subfloat[]{\includegraphics[scale=.14]{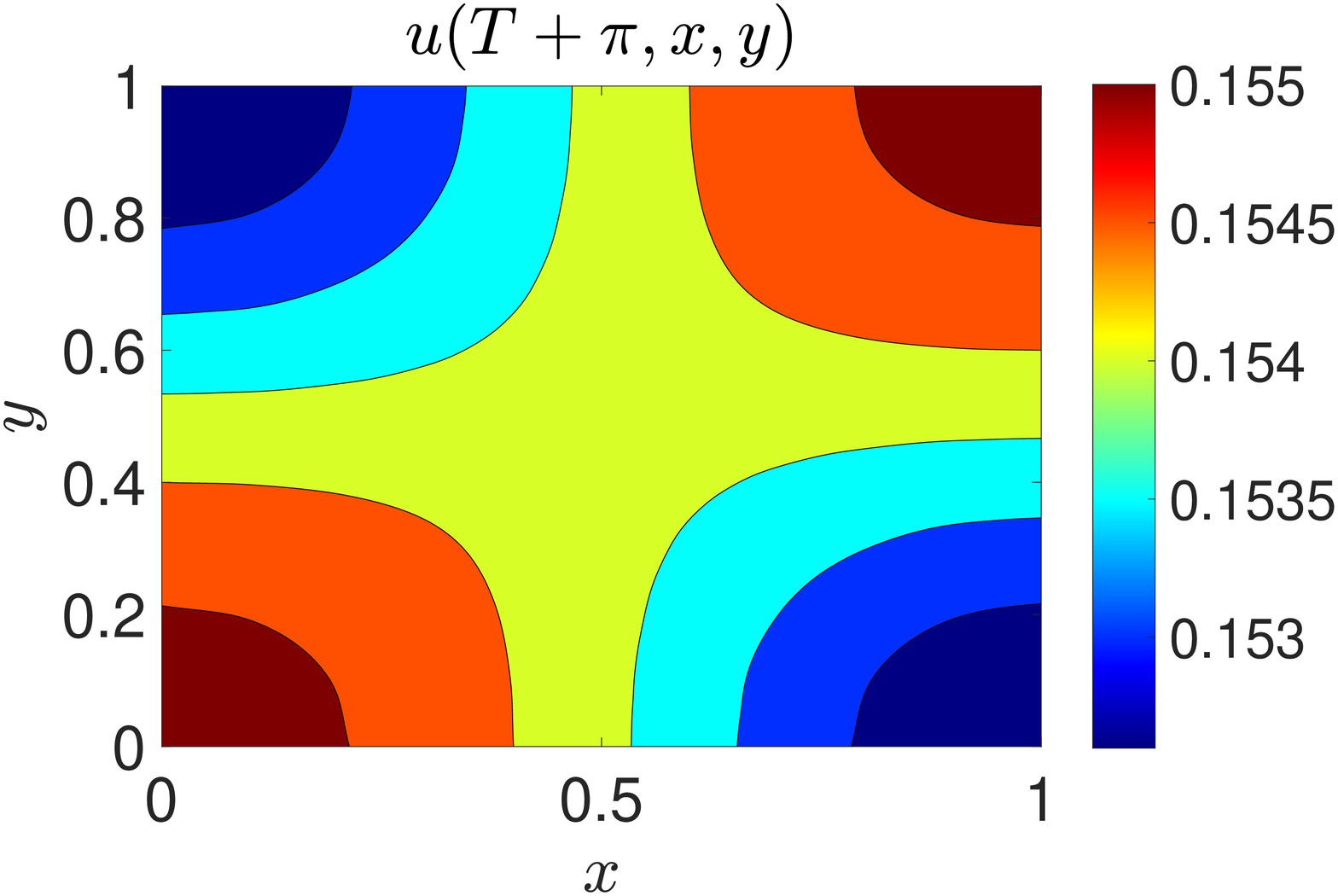}}~~\\\vspace{-2ex}
		\subfloat[]{\includegraphics[scale=.14]{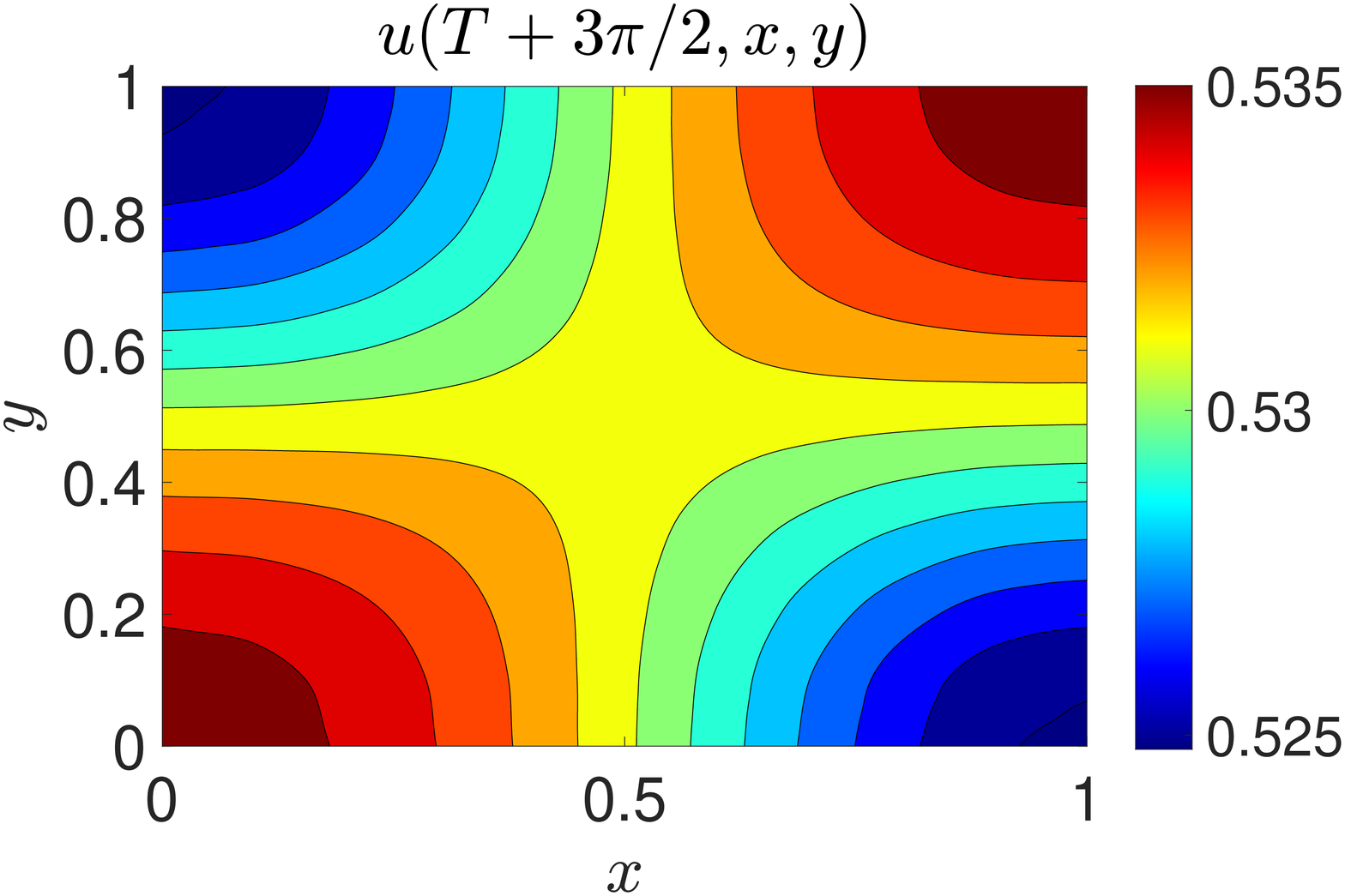}}
		\subfloat[]{\includegraphics[scale=.14]{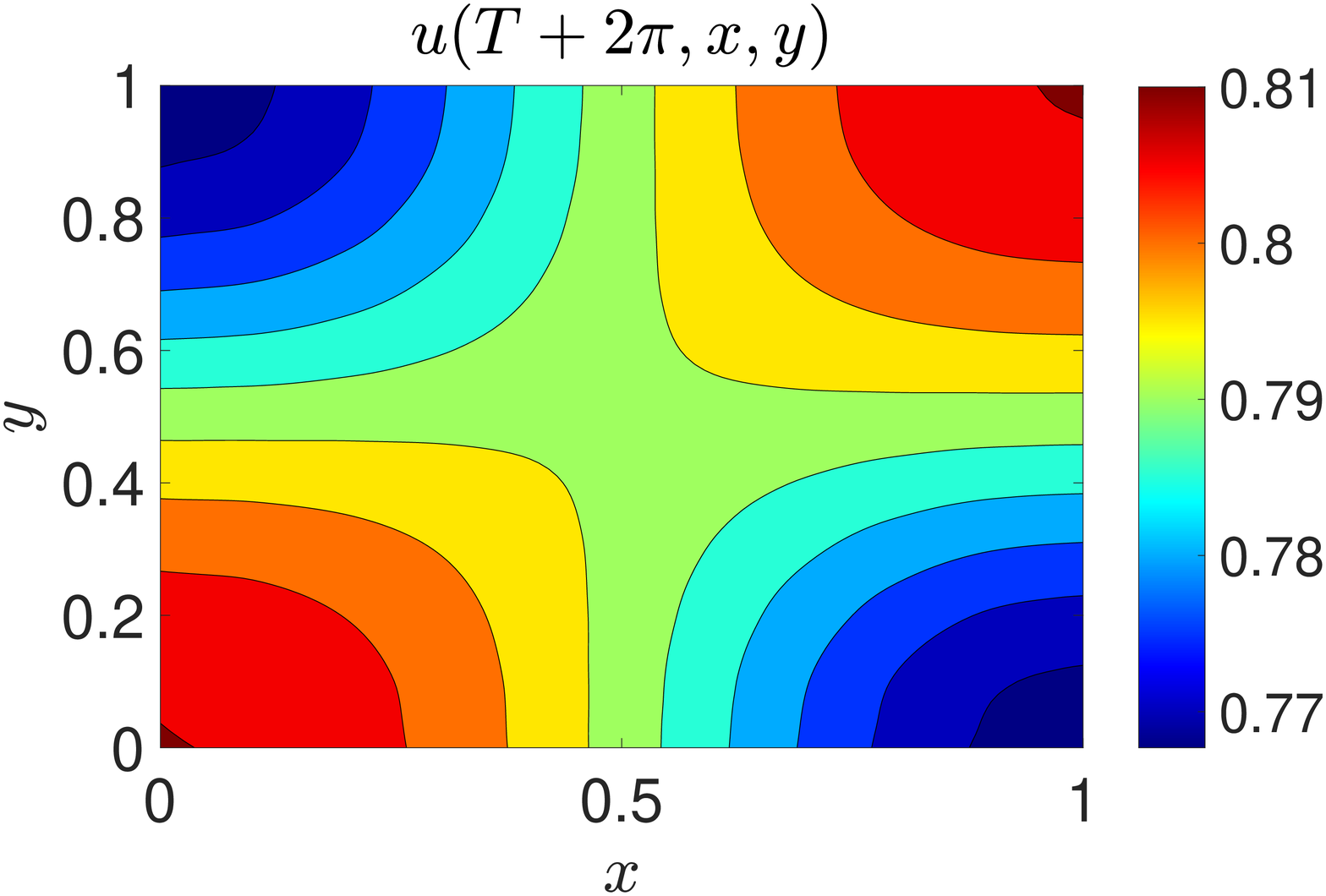}}
		\caption{Contour plot of species $u$ at five different time steps with harvesting coefficients $\mu=0.009$, and $\nu=0.0025$.}
		\label{contour-u-mu-0-0009-nu-0-0025}
	\end{figure}

\begin{figure} [H]
		\centering
		\subfloat[]{\includegraphics[scale=.14]{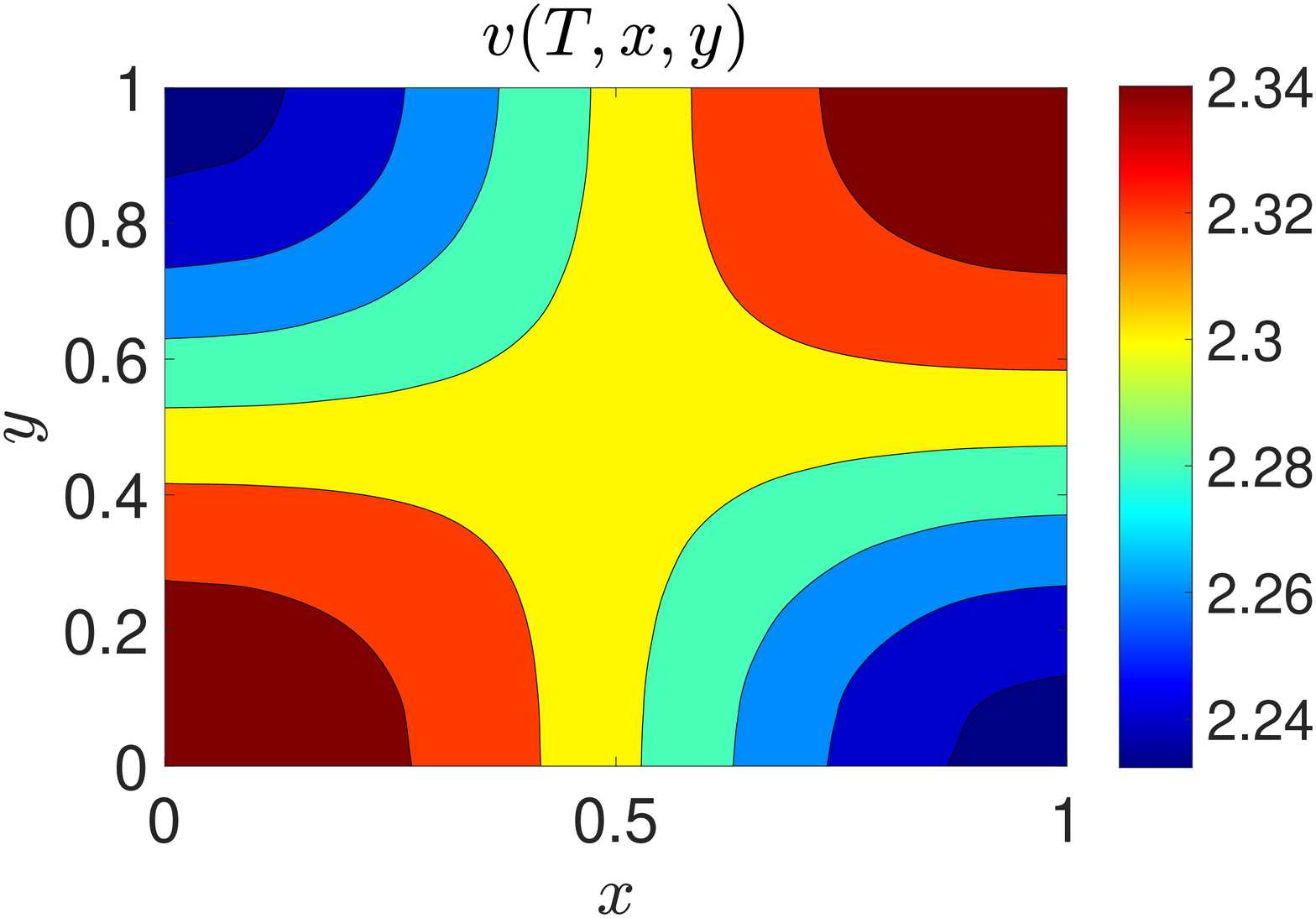}}
		\subfloat[]{\includegraphics[scale=.14]{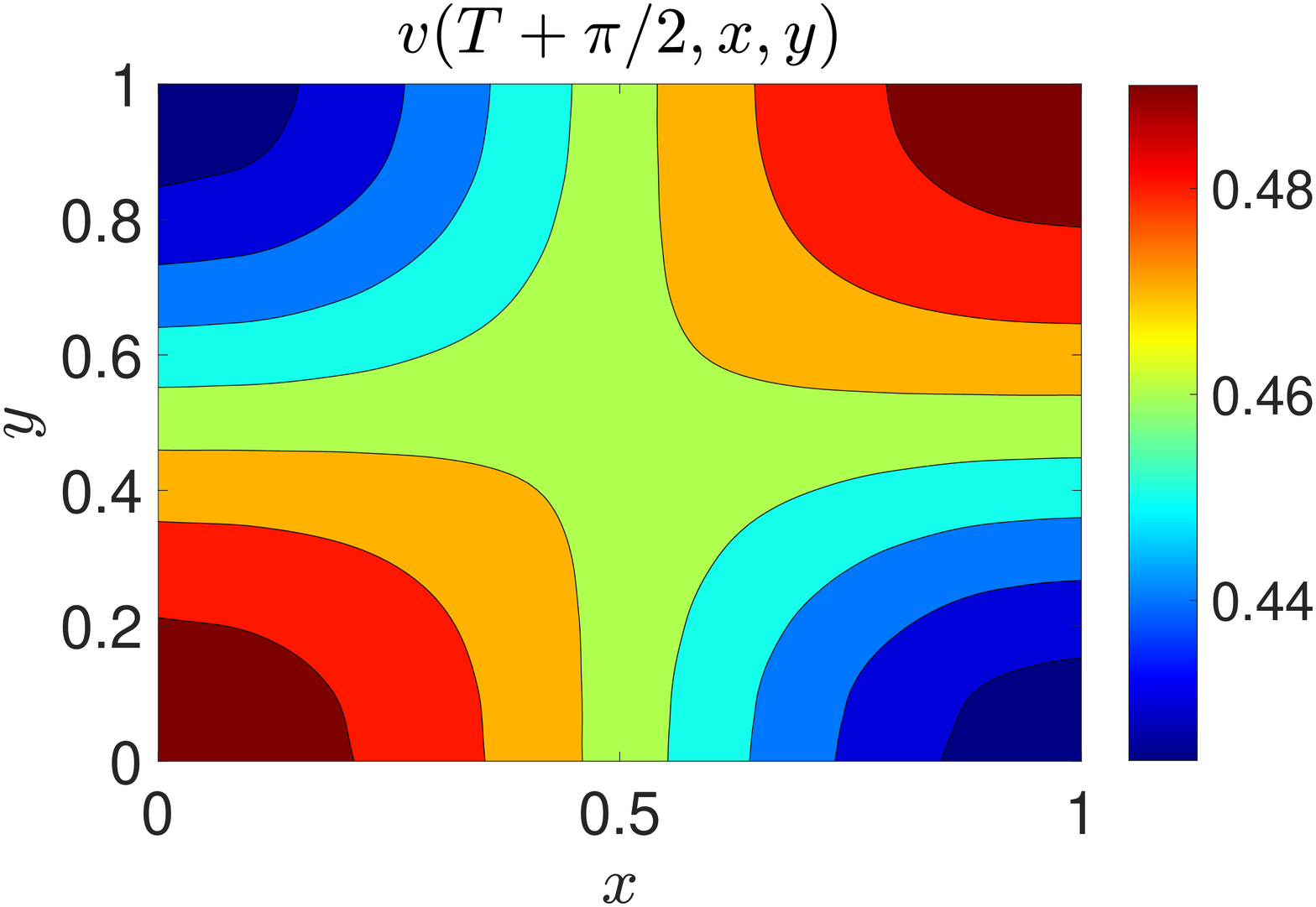}}\subfloat[]{\includegraphics[scale=.145]{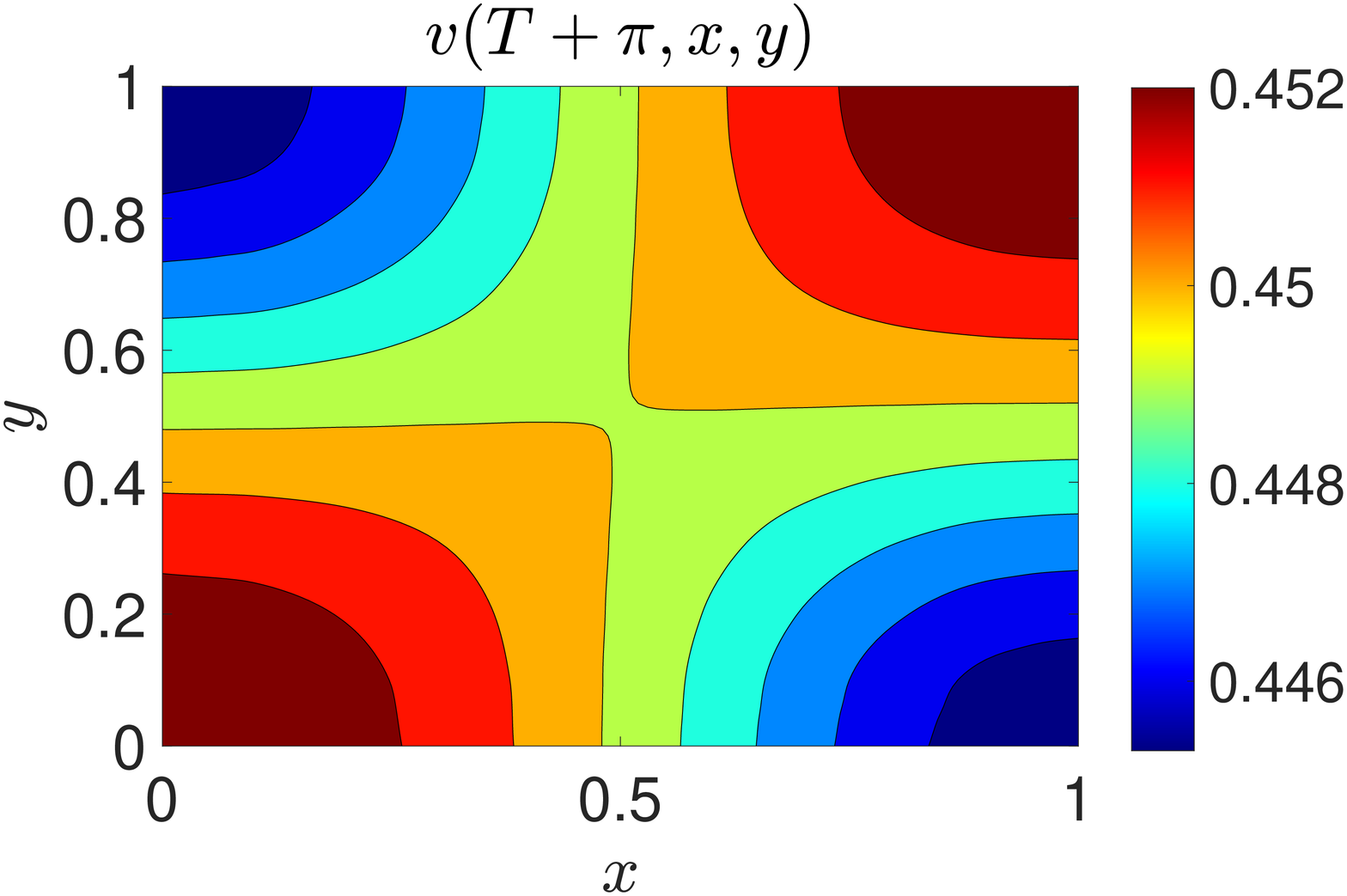}}~~\\\vspace{-2ex}
		\subfloat[]{\includegraphics[scale=.14]{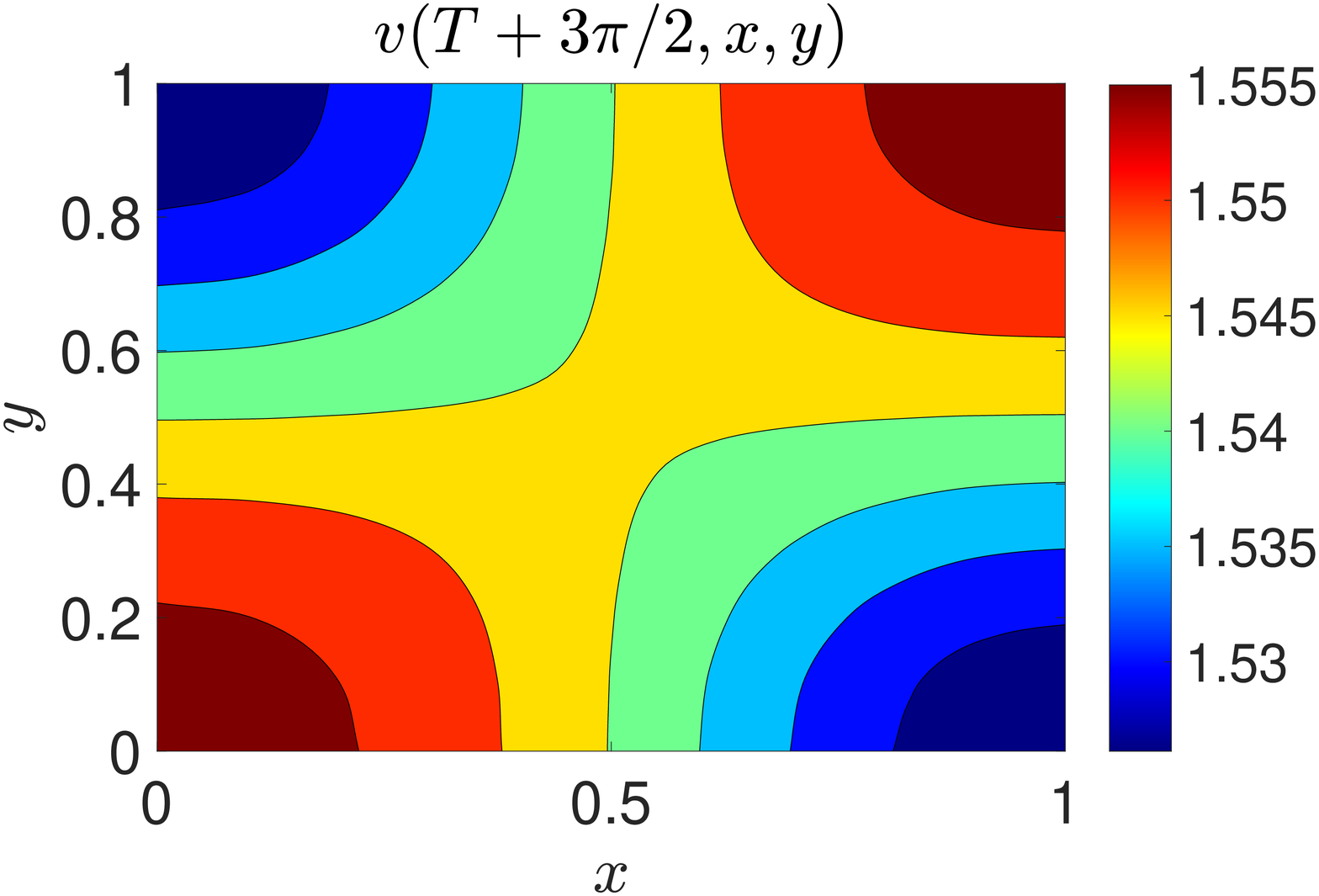}}
		\subfloat[]{\includegraphics[scale=.14]{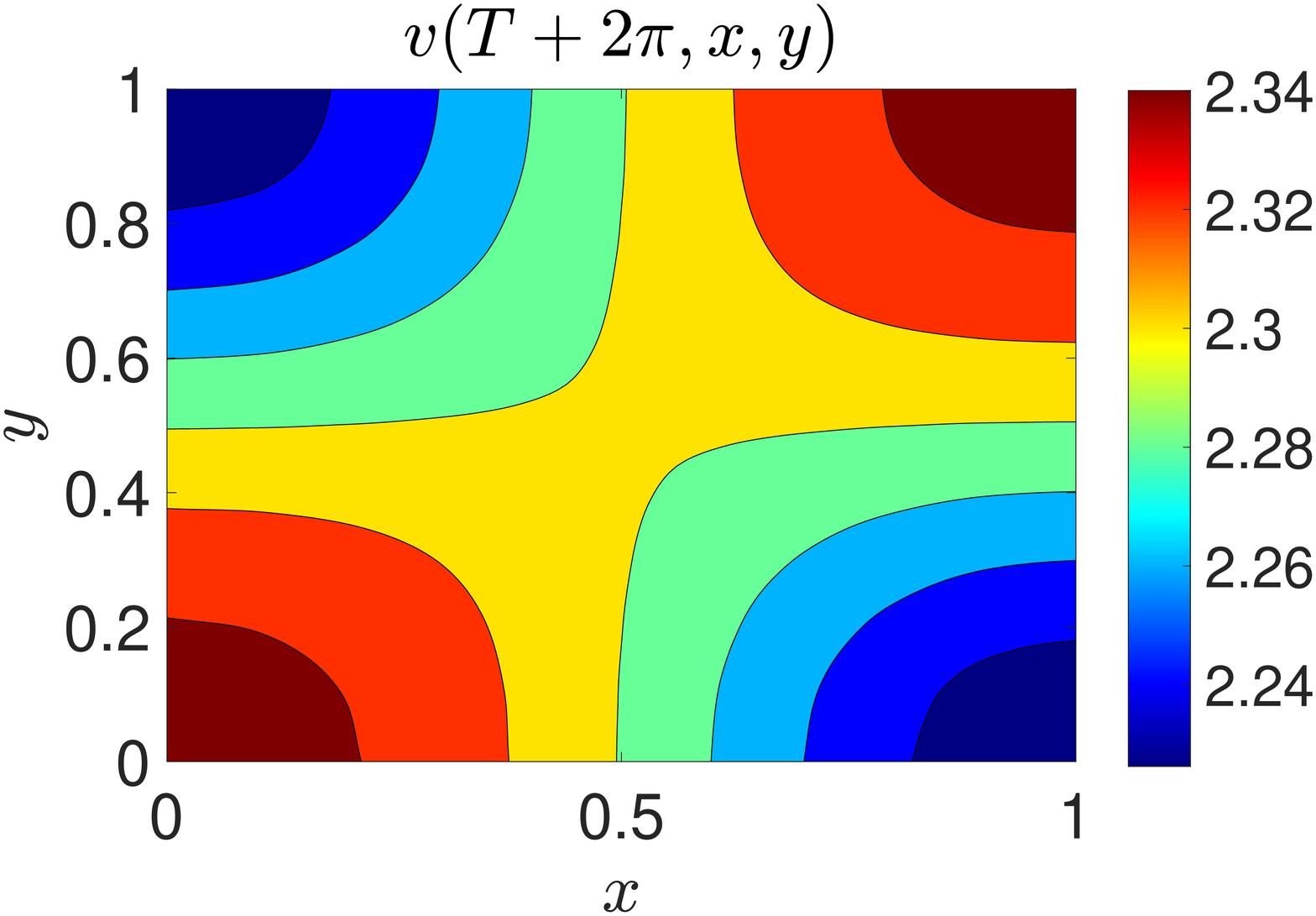}}
		\caption{Contour plot of species $v$ at five different time steps with harvesting coefficients $\mu=0.009$, and $\nu=0.0025$.}
		\label{contour-v-mu-0-0009-nu-0-0025}
	\end{figure}
We also plot the energy of the system corresponding to the species density $u$, and $v$ versus time in Figure \ref{RDE-energy-0-0009-nu-0-0025-dt-0-5-var-K-T-200}. We observe a clear co-existence of the two populations and change their density quasi periodically over time. Since, in this case, $\mu<\nu$, the amplitude of the species density $u$ increases while it decreases for $v$.
	\begin{figure} [H]
		\centering
		\subfloat[]{\includegraphics[width=0.5\textwidth,height=0.3\textwidth]{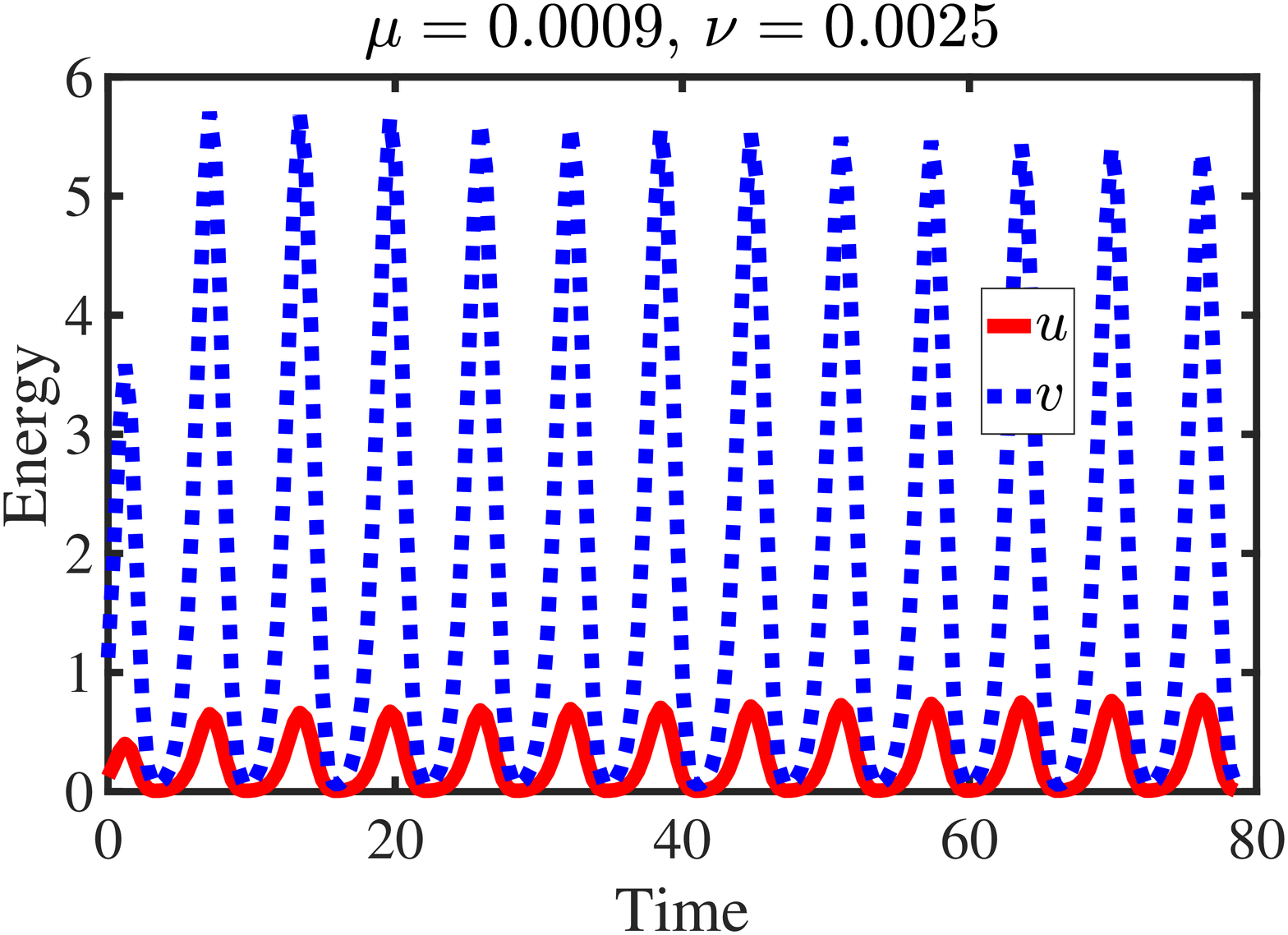}}
		\subfloat[]{\includegraphics[width=0.5\textwidth,height=0.3\textwidth]{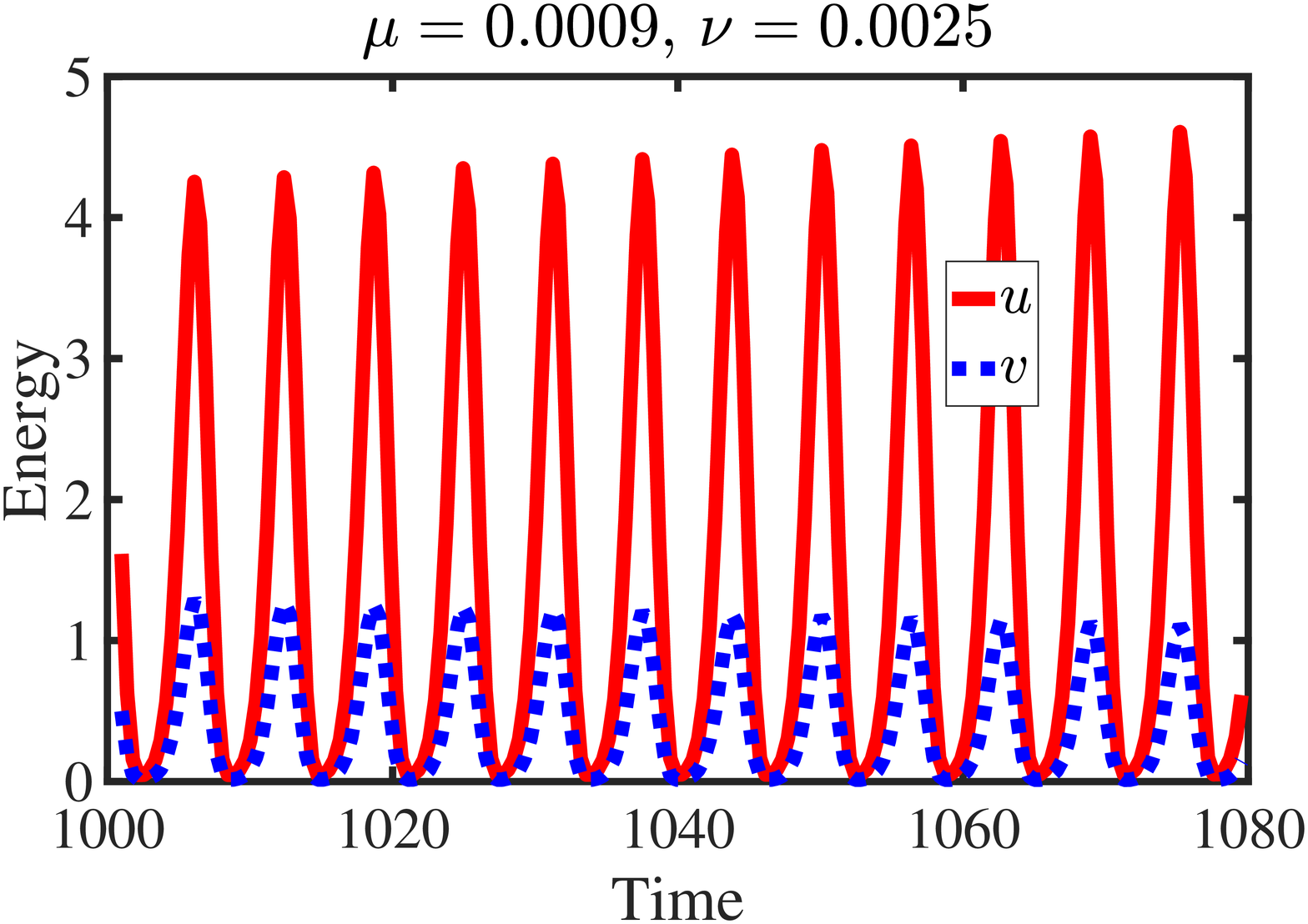}}
		\caption{Short-time (a), and long-time (b) energy of the system for the species density $u$, and $v$ with the harvesting coefficients $\mu=0.0009$, and $\nu=0.0025$.}
		\label{RDE-energy-0-0009-nu-0-0025-dt-0-5-var-K-T-200}
	\end{figure}
\subsubsection{Experiment 4: Exponentially varying carrying capacity}
{In this experiment, we consider the carrying capacity $$K(t,\bx)\equiv\big(1.2+2.5\pi^2e^{-(x-0.5)^2-(y-0.5)^2}\big)\big(1.0+0.3\cos(t)\big),$$ together with constant intrinsic growth rate $r(\bx)\equiv 1$, initial population density $u_0=v_0=1.6$, and harvesting coefficients $\mu=0.0009$, and $\nu=0.0025$.}
\begin{figure} [H]
		\centering
		\subfloat[]{\includegraphics[width=0.5\textwidth,height=0.3\textwidth]{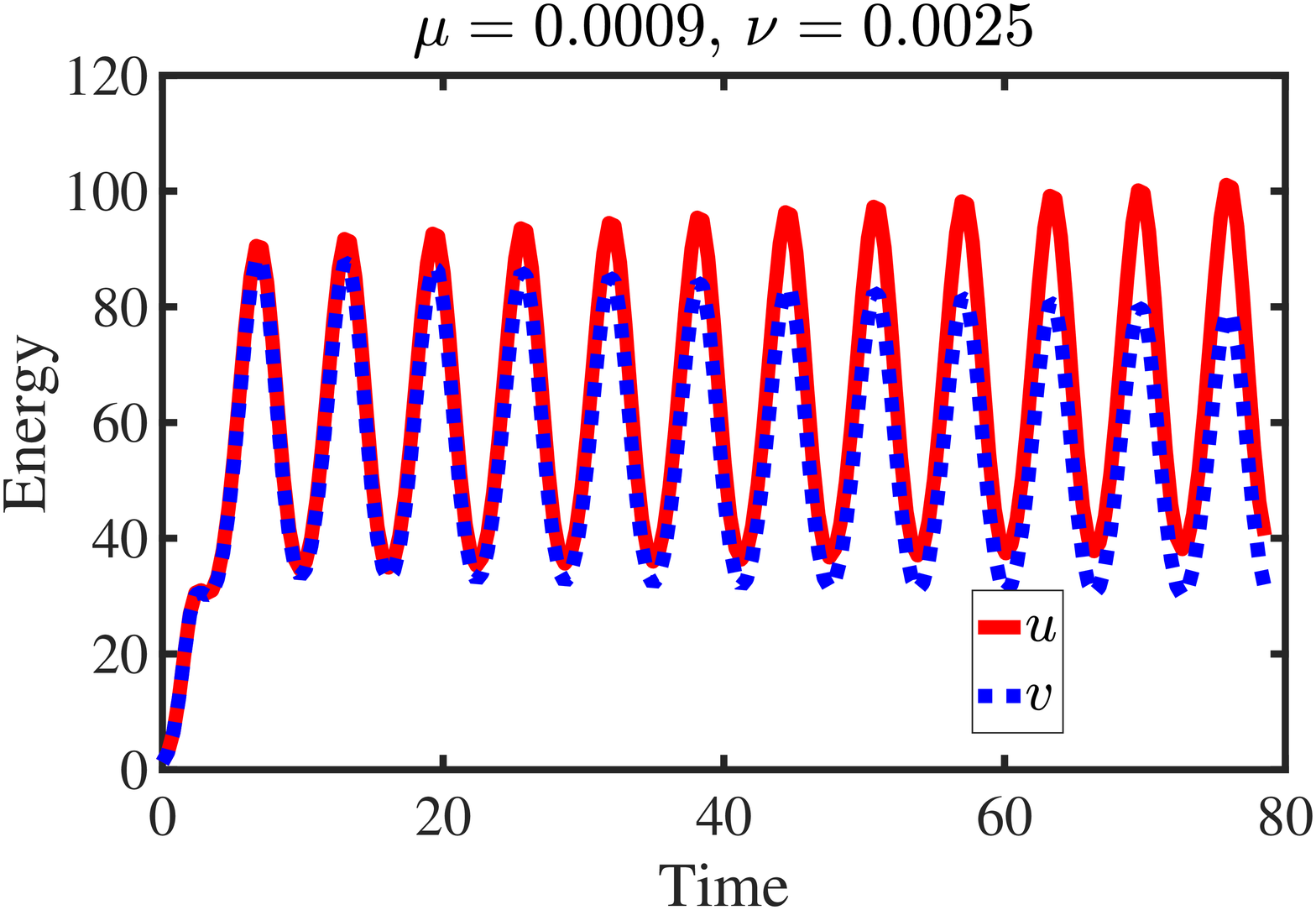}}
		\subfloat[]{\includegraphics[width=0.5\textwidth,height=0.3\textwidth]{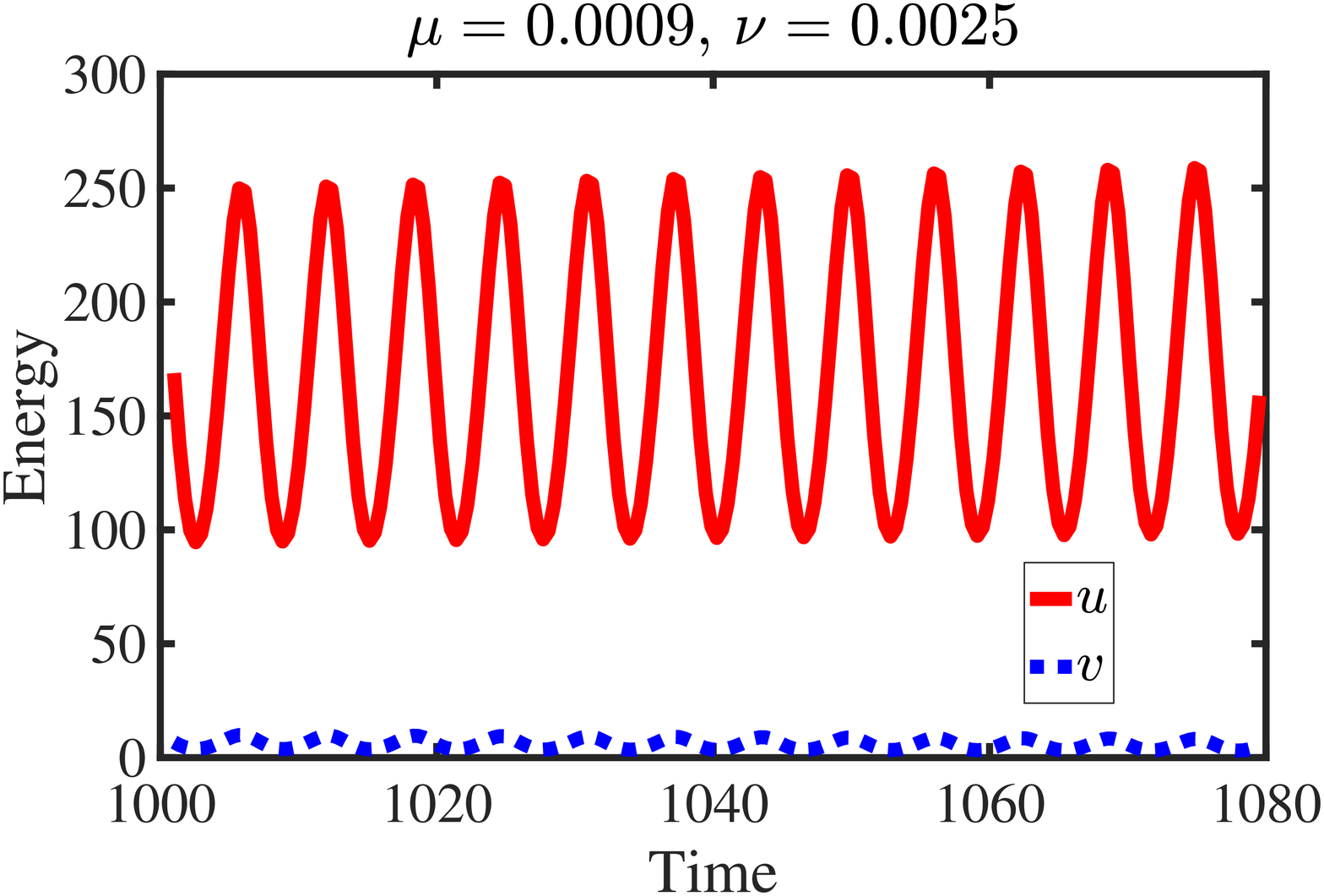}}
		\caption{Short-time (a), and long-time (b) energy of the system for the species density $u$, and $v$ with the harvesting coefficients $\mu=0.0009$, and $\nu=0.0025$.}	\label{RDE-energy-0-0009-nu-0-0025-K-time-exp}
	\end{figure}
{In Figure \ref{RDE-energy-0-0009-nu-0-0025-K-time-exp}, the system energy versus time is plotted for both short-time and long-time evolution with the harvesting coefficients $\mu=0.0009$, and $\nu=0.0025$. We observe periodic population densities for both species and eventually the species $v$ dies out but the species $u$ continues to exist.}

\begin{figure} [H]
		\centering
		\subfloat[]{\includegraphics[scale=.17]{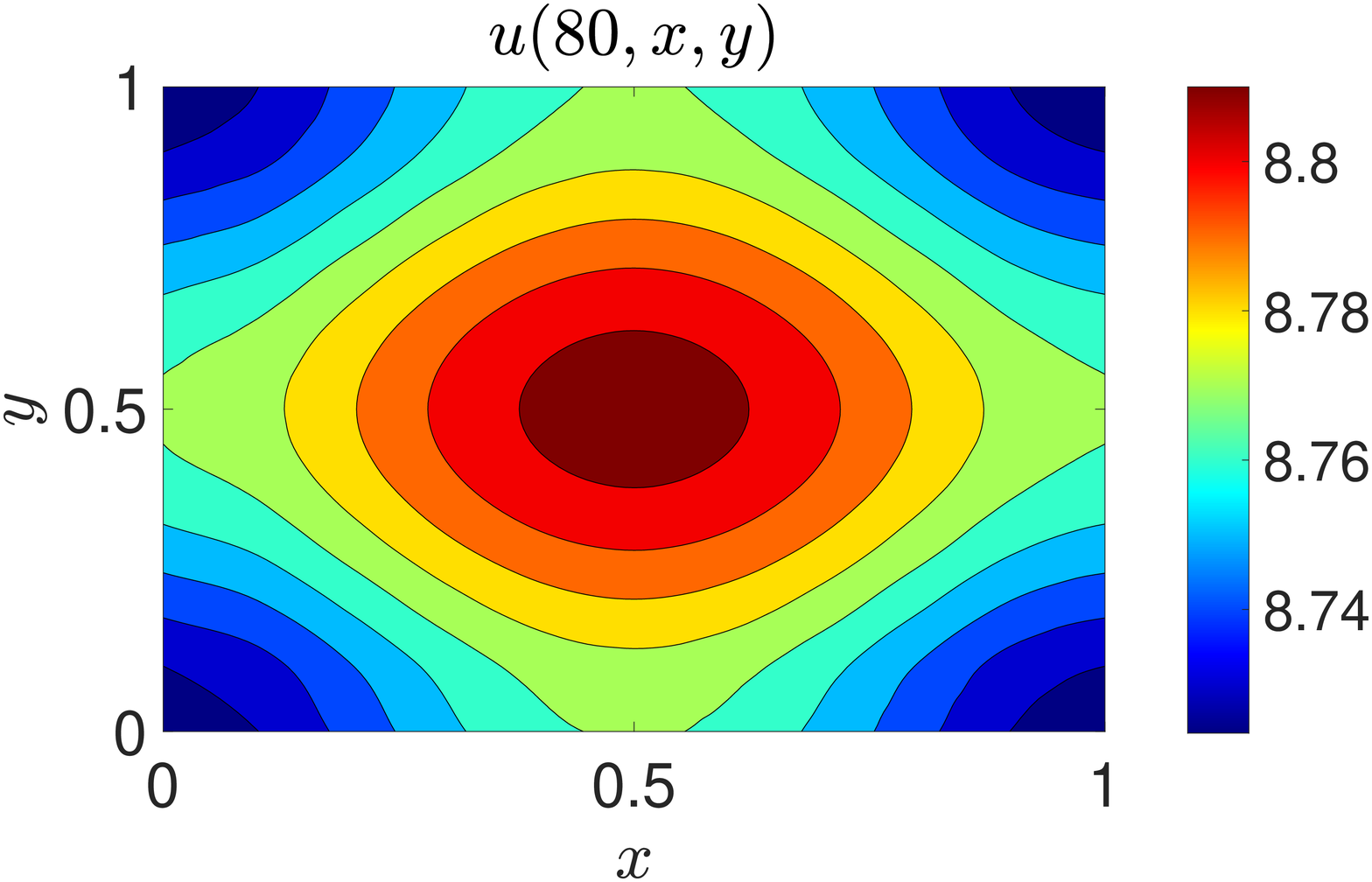}}
		\subfloat[]{\includegraphics[scale=.17]{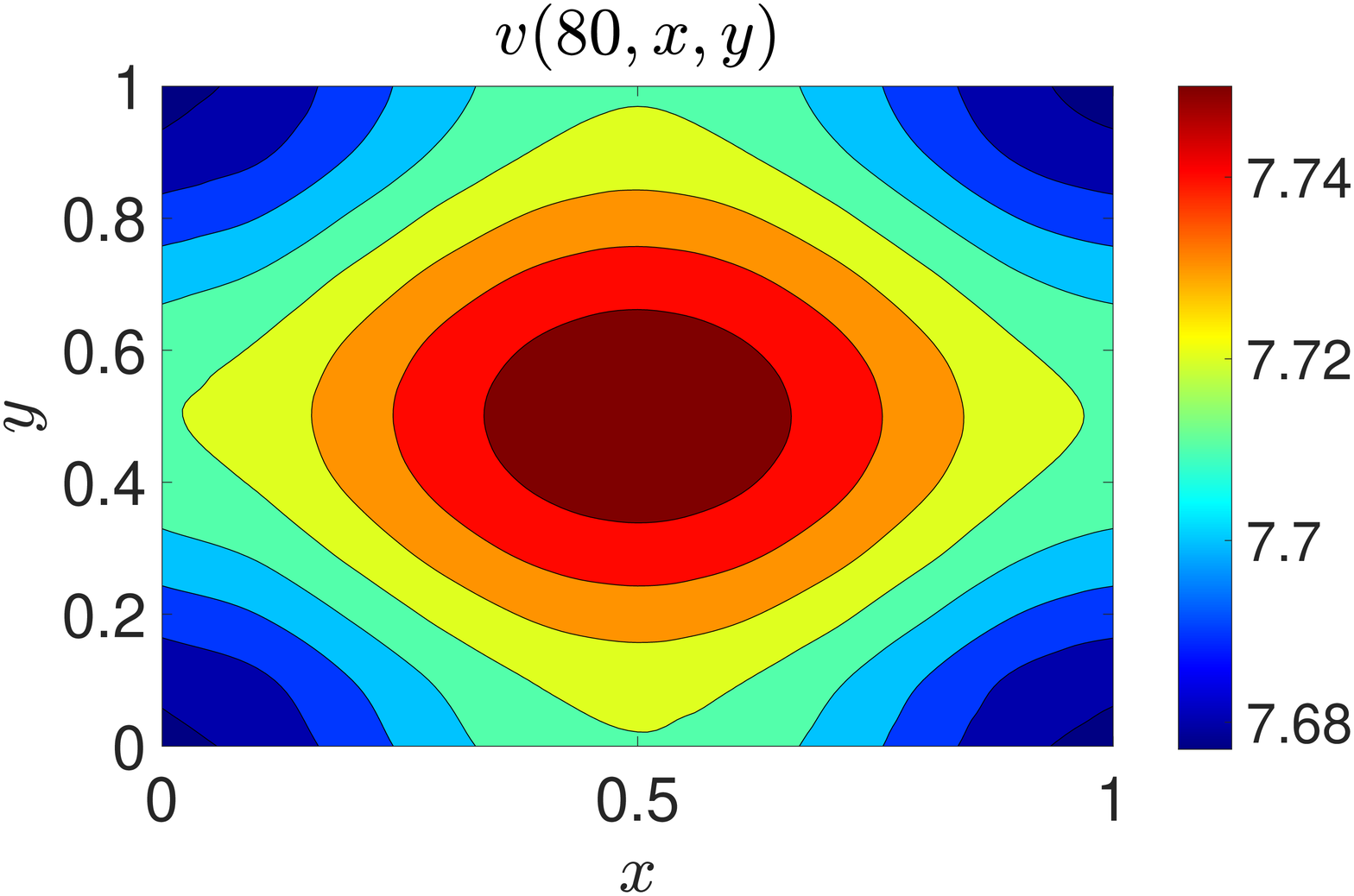}}\\
		\subfloat[]{\includegraphics[scale=.17]{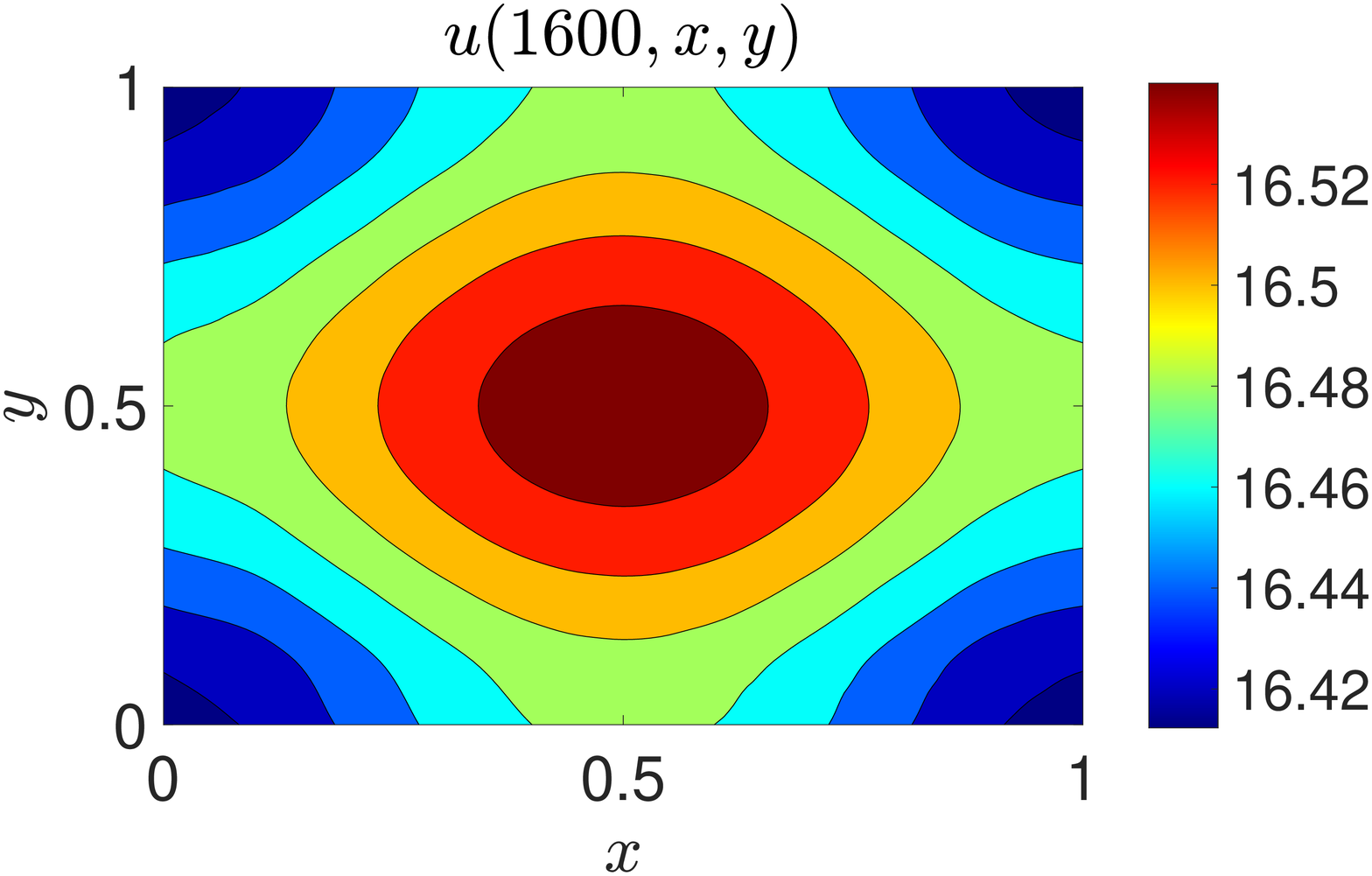}}
		\subfloat[]{\includegraphics[scale=.17]{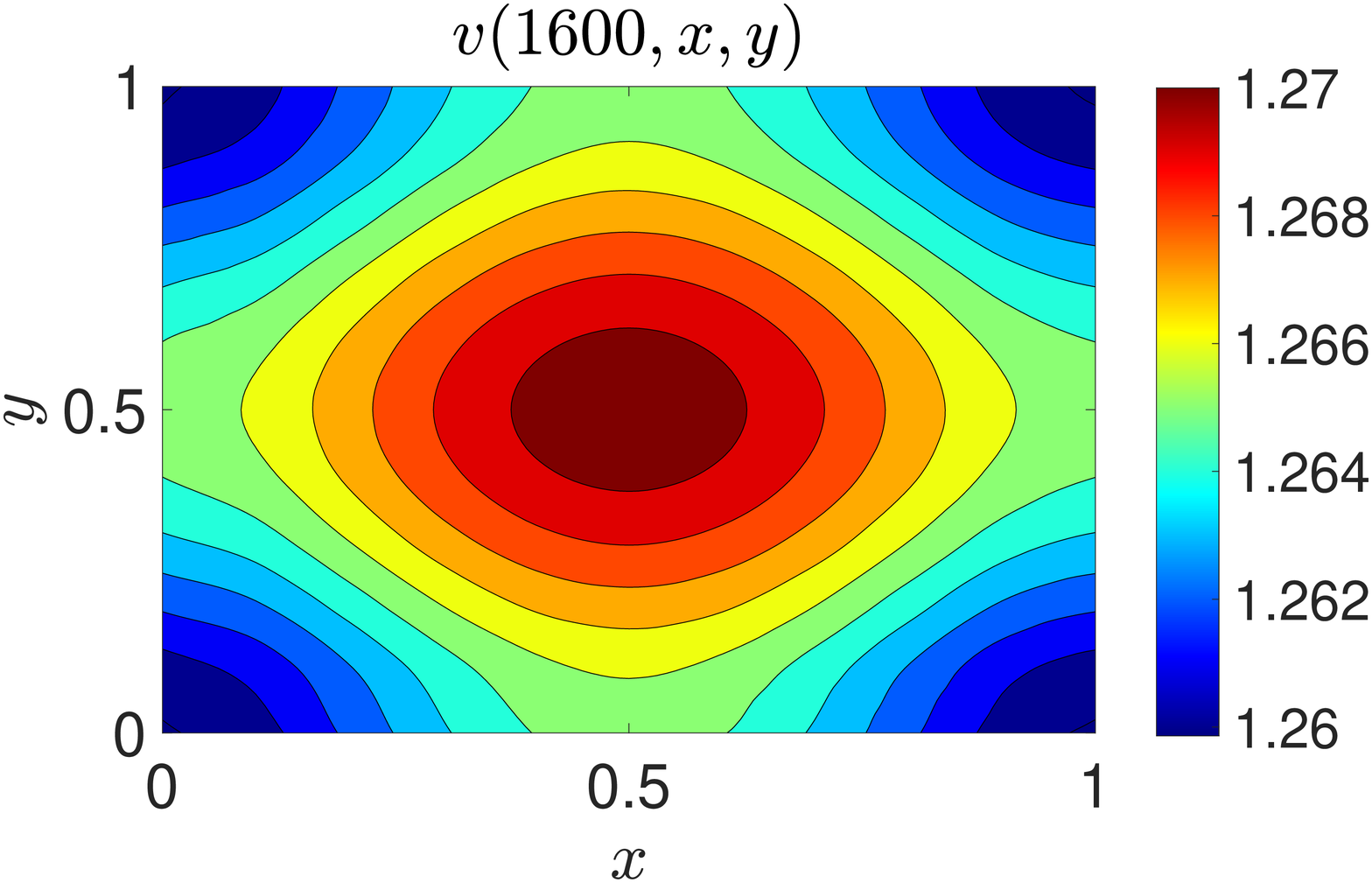}}
		\caption{Contour plot of species $v$ at time $t=80$ (top), and $t=1600$ (bottom) with harvesting coefficients $\mu=0.009$, and $\nu=0.0025$.}
		\label{contour-v-mu-0-0009-nu-0-0025-exp}
	\end{figure}
{In Figure \ref{contour-v-mu-0-0009-nu-0-0025-exp}, we represent the contour plot for both species at times $t=80$, and $t=1600$. It is observed that the highest population density is at $(0.5,0.5)$ and there is a coexistence of both species though the population density of the species $u$ remains bigger than the species $v$ at every places. This is the effect of the different harvesting parameters.}

\subsubsection{Experiment 5: Time dependent intrinsic growth rate}
{In this experiment, we consider a periodic, both time and space dependent carrying capacity and intrinsic growth rate as $$K(t,\bx)\equiv(2.5+\cos(x)\cos(y))(1.2+\cos(t)),$$
and $$r(t,\bx)\equiv(1.5+\sin(x)\sin(y))(1.2+\sin(t)),$$ respectively. We plot the system energy versus time in Figure \ref{r-time-space-k-time-space} (top) for the equal harvesting coefficients pair $\mu=\nu=0.0009$ for both short-time and long-time. Clearly, $\mu=\nu=0.0009<\inf\limits_\Omega r(t,\bx)$, and thus we observe a co-existence of the species. In Figure \ref{r-time-space-k-time-space} (bottom), the system energy versus time is plotted for  $\mu=0.0009$, and $\nu=0.001$ for both short-time and long-time. In this case, we see the presence of both species in both the short-time and long-time evolution having the effect of harvesting. That is, the amplitude of the density $u$ increases whereas for $v$ it decreases. }
\begin{figure} [H]
		\centering
		\subfloat[]{\includegraphics[width=0.5\textwidth,height=0.3\textwidth]{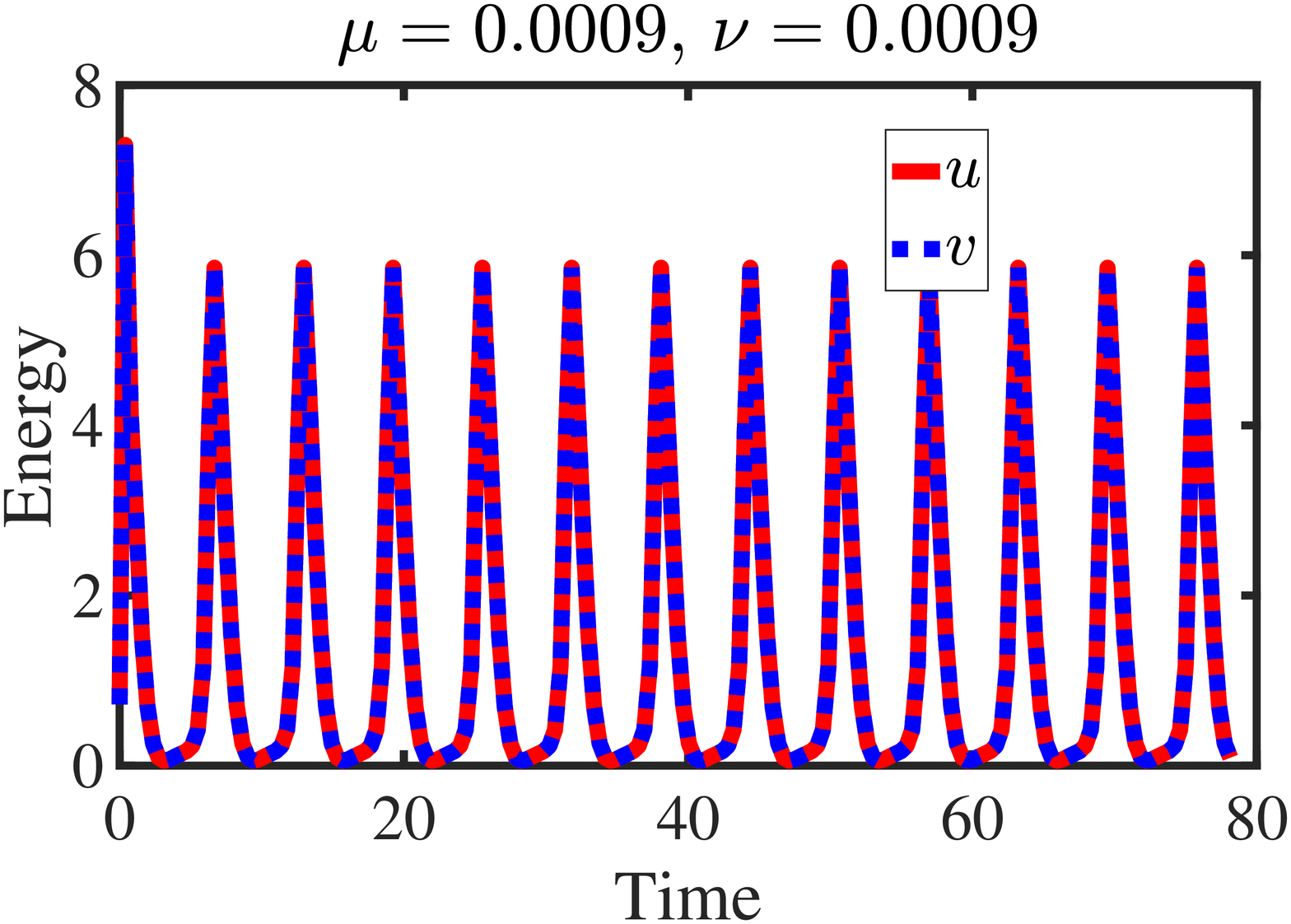}}
		\subfloat[]{\includegraphics[width=0.5\textwidth,height=0.3\textwidth]{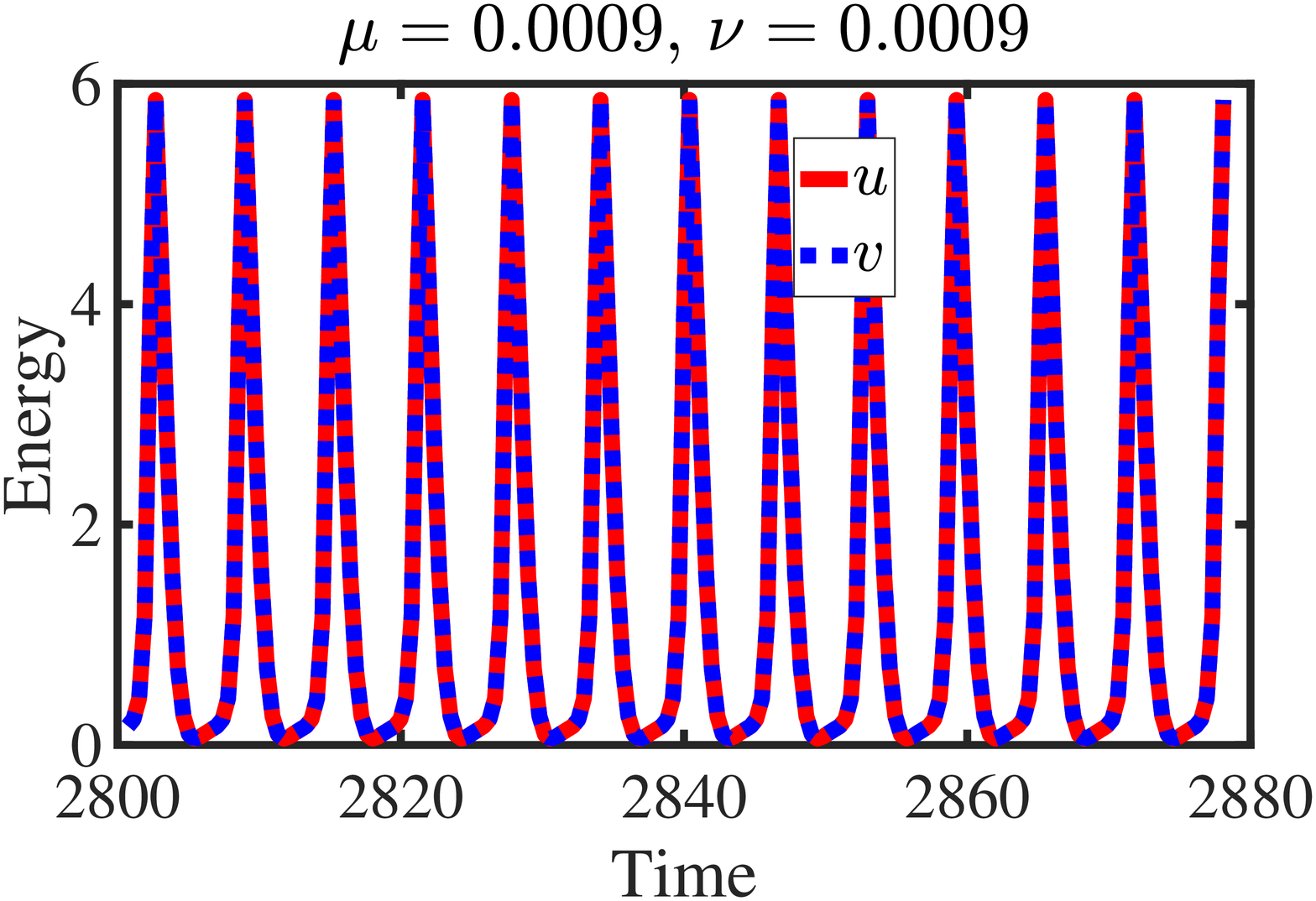}}\\
		\subfloat[]{\includegraphics[width=0.5\textwidth,height=0.3\textwidth]{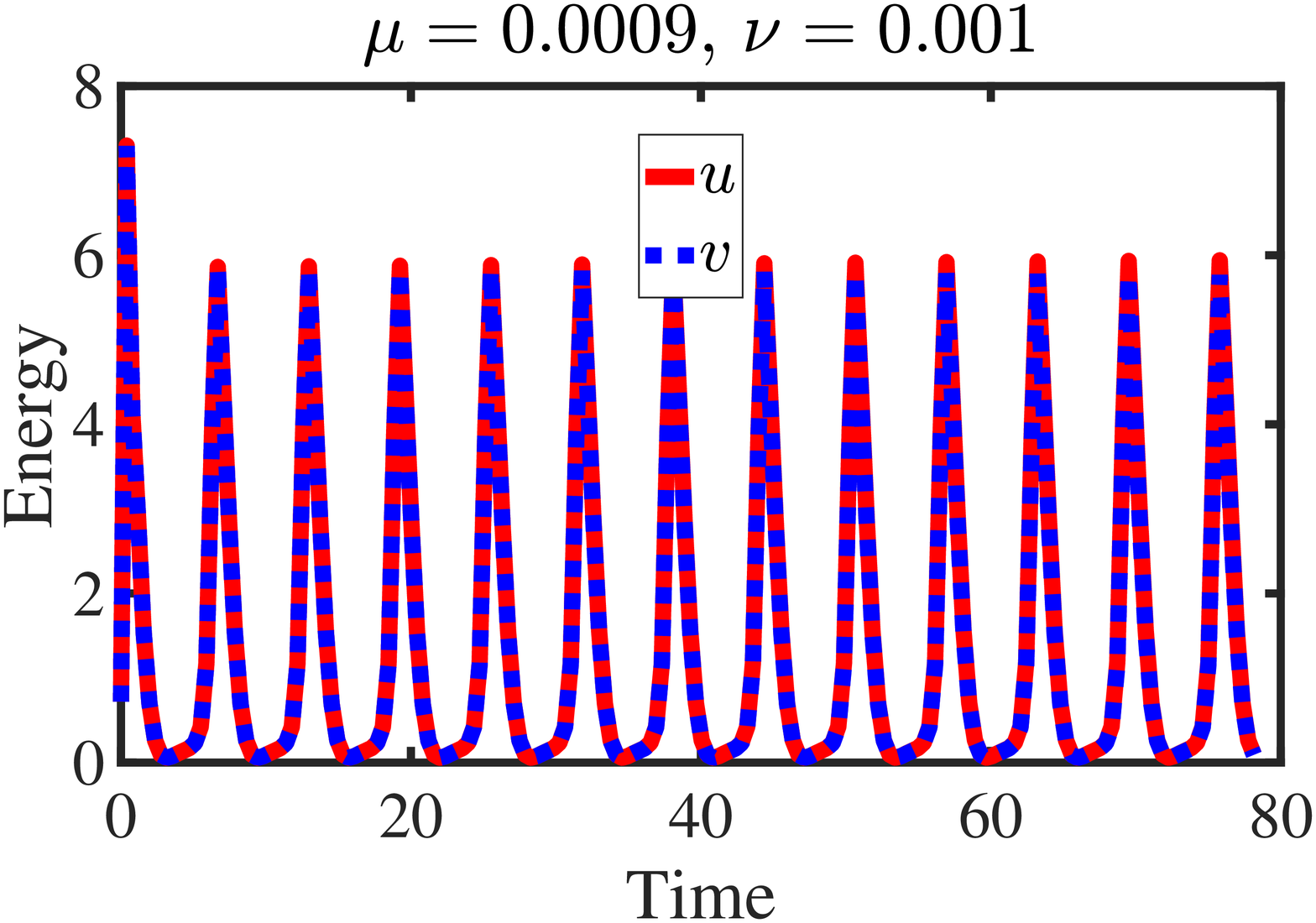}}
		\subfloat[]{\includegraphics[width=0.5\textwidth,height=0.3\textwidth]{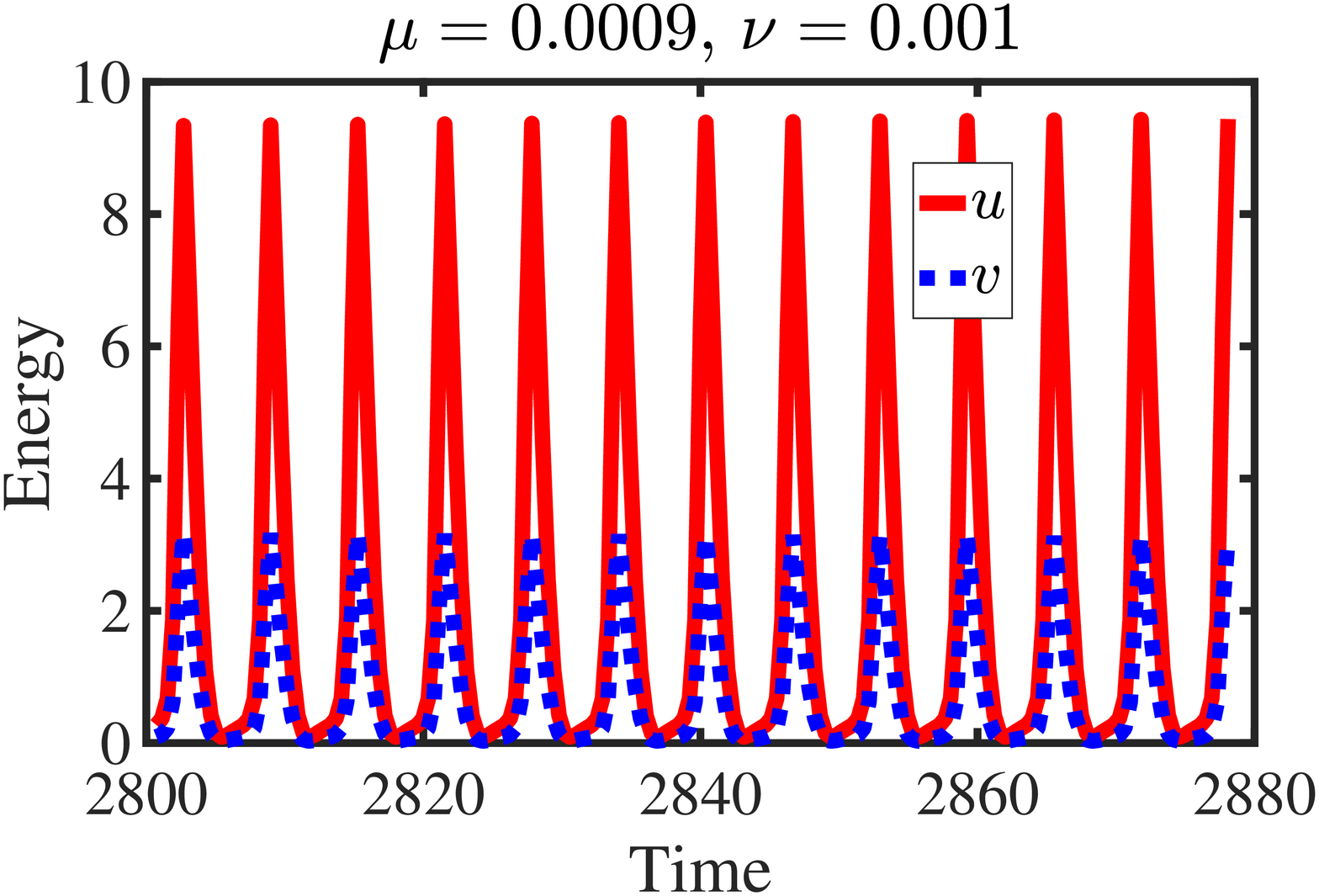}}
		\caption{Evolution of system energy for species density $u$, and $v$ for $u_0=v_0=1.2$, with periodic space and time dependent intrinsic growth rate and carrying capacity.}\label{r-time-space-k-time-space}
	\end{figure}

\section{Conclusion}\label{conclusion}
In this paper, we have studied two competing species in spatially heterogeneous environments. We observe various scenarios for several harvesting rates. 
When the harvesting rate does not surpass the intrinsic growth rate that means $\mu,\nu \in [0,1)$ and imposes conditions on $\mu$ and $\nu$, coexistence is possible. For small values of $\mu$ and $\nu$, prey and predator population coexist which is observed analytically and numerically. Moreover, we estimate the threshold of harvesting coefficient when coexistence is possible. Further, only one species extinct when their harvesting rate is greater than their growth rate and other species persist when the harvesting rate is less than their growth rate. Both species become extinct when their harvesting rate exceeds the growth rate and the system (\ref{equ_1}) as well as (\ref{equ_2}) converges to the trivial solution.
From these analytic and numerical observations, we can conclude that prey and predator species coexist when the harvesting rate is less than their growth rate and whenever the harvesting rate exceeding their intrinsic growth rate both species dies out.

\section*{Acknowledgments}
The author, M. Kamrujjaman research, was partially supported by the University Grants Commission (UGC),  University of Dhaka, Bangladesh. Also the author Muhammad Mohebujjaman, research was supported by National Science  Foundation grant DMS-2213274, and University Research grant, TAMIU.
\section*{Conflict of interest}
The authors declare no conflict of interest. 



 \appendix
\section{Appendix}\label{A}
Let $\mathcal{T}_{t}((m_{0}(x), n_{0}(x))) = (m(t, x), n(t, x))$, which implies the operator $\mathcal{T}_{t}$ picks out the initial conditions with boundary conditions of the system
\begin{equation}\label{A.1}
	\begin{cases}
		& \displaystyle	\dfrac{\partial m_{i}}{\partial t}-\mathcal{L} m_{i} =f_{i}(x,m_{1},m_{2}),\;\;\; t>0,\;\;\; x\in\Omega,\\
		& \displaystyle\dfrac{\partial m_{i}}{\partial \eta}=0,\;\;\; x\in \partial\Omega,\\
		& \displaystyle m_{i}(0,x)=m_{i,0}(x),\;\; x\in\Omega,\;\; i=1,2,
	\end{cases}
\end{equation}
and gives the solution $(m(t, x), n(t, x))$. The operator $\mathcal{L}$ represented in the following way
\begin{equation}\label{A.2}
	\mathcal{L} m = \sum_{i,j=1}^{p} a_{ij}(t,x) \frac{\partial^2 m}{\partial x_{i}x_{j}}+\sum_{i=1}^{p} b_{i}(t,x)\frac{\partial m}{\partial x_{i}}
\end{equation}
as well as uniformly elliptic with H\"older continuous coefficients, moreover, there subsist two positive real numbers, namely, $\lambda$ and $\Lambda$ such that for any vector $\zeta =(\zeta_{1},...,\zeta_{n})\in \mathbb{R}^{n}$,
\begin{equation}\label{A.3}
	\lambda \mid \zeta \mid^{2}\leq\sum_{i,j=1}^{p} a_{ij}(t,x)\zeta_{i}\zeta_{j}\leq \Lambda\mid \zeta\mid^{2},\;\;\;(t,x)\in [0, \mathcal{T}]\times(\overline{\Omega}).
\end{equation}
The proof of the following theorem is given in \cite{Ref_2}.
\begin{Th}\label{th_A.1}
	Suppose $\mathcal{T}_t$ is defined by $\mathcal{T}_{t}((m_{0}(x), n_{0}(x))) = (m(t, x), n(t, x))$, where $(m(t, x), n(t, x))$ is a solution to the first equation of (\ref{A.1}). Assume the following cases hold:
	\begin{enumerate}
		\item [1.] $\mathcal{T}_t$  is strictly order preserving, which implies that $m_1(x)\geq n_1(x)$ and $m_2(x)\leq n_2(x)$ indicate that $\mathcal{T}_t(m_1(x))\geq \mathcal{T}_t(n_1(x))$ and $\mathcal{T}_t(m_2(x)) \leq \mathcal{T}_t(n_2(x))$.
		\item [2.]  $\mathcal{T}_t(0, 0) = 0 $ for all $t > 0$ and $(0, 0)$ is a repelling equilibrium. Then there subsists a neighbourhood $\mathcal{M}$ of $(0,0)$ in $X^{+}$ imply that for each $(m_1,n_1)\in ,\; (m_1,n_1)\neq (0,0)$, there is $t_0 >0$ imply that $\mathcal{T}_{t_{0}}(m_1, n_1) \notin \mathcal{M} $.
		\item [3.] $\mathcal{T}_t((m_1, 0)) = (\mathcal{T}_t(m_1), 0)$ and $\mathcal{T}_t(m_1) \geq 0$, if $m_1 \geq 0$ such that there exists  $\widetilde{m}_{1}> 0$ imply that $\mathcal{T}_t((\widetilde{m}_{1}, 0)) = (\widetilde{m}_{1}, 0)$ for any $t \geq 0$. Same cases holds for $(0, \widetilde{m}_{2})$.
		\item [4.] When $m_{i,0}>0,\; i=1,2$ which implies $\mathcal{T}_t(m_{i,0})>0$. When $m_{1}(x)>n_{1}(x)$ and $m_{2}(x)<n_{2}(x)$ which implies $\mathcal{T}_t(m_1(x)) > \mathcal{T}_t(n_1(x))$ and $\mathcal{T}_t(m_2(x)) < \mathcal{T}_t(n_2(x))$.
	\end{enumerate}
	Therefore, there exactly one of the following conditions hold:
	\begin{enumerate}
		\item [(a)] There exists a positive steady state $(m_{1,s}, n_{1,s})$ of (\ref{A.1}).
		\item [(b)]  $(m_1, n_1) \rightarrow (\widetilde{m}_{1}, 0)$ when $t\rightarrow \infty$ for all $((m_{1,0}(x), n_{1,0}(x))\in \mathcal{I} =
		 \langle 0,\widetilde{m}_{1}\rangle  \times \langle 0, \widetilde{m}_{2}\rangle$. The $\langle .,. \rangle$ represents as an interval. 
		\item [(c)] $(m_1, n_1) \rightarrow (0,\widetilde{m}_{2})$ when $t\rightarrow \infty$ for all $((m_{1,0}(x), n_{1,0}(x))\in \mathcal{I} = \langle 0,\widetilde{m}_{1}\rangle  \times \rangle 0, \widetilde{m}_{2}\rangle$.
	\end{enumerate}
	Further, when (b), (c) holds then for all $(m_{1,0}, n_{1,0})\in X^{+} \setminus \mathcal{I}$ and $m_{1,0}, n_{1,0}\neq 0$ either $(m,n)\rightarrow (\widetilde{m}_{1}, 0)$ or $(m_1, n_1) \rightarrow (0,\widetilde{m}_{2})$ when $t\rightarrow \infty$.
\end{Th}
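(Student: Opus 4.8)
The plan is to recognize the statement as the competitive trichotomy of Hsu--Smith--Waltman for strongly monotone semiflows and to prove it with the abstract theory of monotone dynamical systems, which is the route of \cite{Ref_2}. First I would put on $X^{+}$ the \emph{competitive order} generated by the cone $K=C^{+}(\overline{\Omega})\times(-C^{+}(\overline{\Omega}))$, so that $(m_1,m_2)\preceq(n_1,n_2)$ means $m_1\le n_1$ and $m_2\ge n_2$. With this order hypothesis~1 says that $\mathcal{T}_t$ is order preserving and hypothesis~4 that it is strongly order preserving, while parabolic smoothing for (\ref{A.1}) makes $\mathcal{T}_t$ order compact for $t>0$; these are exactly the ingredients the abstract theorems need. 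The semi-trivial equilibria then satisfy $E_-:=(0,\widetilde{m}_2)\prec E_+:=(\widetilde{m}_1,0)$, the interval $\mathcal{I}=\langle0,\widetilde{m}_1\rangle\times\langle0,\widetilde{m}_2\rangle$ is precisely the order interval $[E_-,E_+]$, and since the corners are fixed, monotonicity squeezes $\mathcal{T}_t$ back into $\mathcal{I}$, so $\mathcal{I}$ is forward invariant. A comparison of each species with its own decoupled logistic equation (as in Lemmas~\ref{lemma_ex2} and \ref{lemma_ex4}) gives $\limsup_t m\le\widetilde{m}_1$ and $\limsup_t n\le\widetilde{m}_2$, so the $\omega$-limit set of every nontrivial orbit lies in $\mathcal{I}$; this reduces the problem to the semiflow on $[E_-,E_+]$.

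The delicate feature is that the trivial state lies \emph{inside} this order interval: one checks $E_-\prec(0,0)\prec E_+$, so $(0,0)$ is an equilibrium of the open competitive interval (albeit on its topological boundary). Here hypothesis~2 is indispensable: because $(0,0)$ is repelling, no nontrivial orbit accumulates at it, and the neighbourhood $\mathcal{M}$ is eventually left by every such orbit. Thus the effective dynamics takes place on $[E_-,E_+]$ with $(0,0)$ excised as a repeller, and the connecting-orbit machinery can be made to detect only genuinely \emph{positive} equilibria rather than being trapped at $(0,0)$.

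With $(0,0)$ neutralized I would apply the Dancer--Hess connecting-orbit lemma to the ordered pair $E_-\prec E_+$. It gives a dichotomy: either there is a positive equilibrium $E_*$ with both components strictly positive (by the strict positivity in hypothesis~4), which is alternative~(a); or there is a strictly monotone entire orbit joining the corners. In the latter case the extremal-equilibria construction --- monotone orbits issuing from points just above $E_-$ and just below $E_+$ converge to fixed points $\underline{E}\succeq E_-$ and $\overline{E}\preceq E_+$, and the absence of a positive equilibrium forces these into $\{E_-,E_+\}$ --- together with monotone sandwiching drives every nontrivial orbit of $\mathcal{I}$ to a single corner, selected by the linearized (in)stability of the two semi-trivial states; this is alternative~(b) or (c). The three alternatives are mutually exclusive because a corner that attracts all of $\mathcal{I}$ precludes any positive equilibrium. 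Finally, for nontrivial data in $X^{+}\setminus\mathcal{I}$, the absorption of the first step places the $\omega$-limit set in $\mathcal{I}$, and the convergence just proved delivers the closing clause.

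The hard part will be twofold: handling the trivial equilibrium interior to the competitive order interval, where the whole argument hinges on converting the repelling hypothesis~2 into a quantitative lower bound that keeps nontrivial orbits off $(0,0)$ and lets the monotone machinery ignore it; and, in the no-coexistence case, upgrading the extremal monotone orbits to \emph{global} convergence to one corner --- showing the interval is genuinely filled and that no trajectory is stranded --- rather than the merely generic convergence that the abstract theory gives for free. Establishing strong monotonicity and order compactness carefully enough to license Dancer--Hess is the technical prerequisite underlying both.
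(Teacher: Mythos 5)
Your proposal reconstructs essentially the argument of the paper's cited source: the paper itself gives no proof of Theorem \ref{th_A.1}, deferring entirely to Hsu--Smith--Waltman \cite{Ref_2}, and your route --- the competitive cone order on $X^{+}$, absorption of all nontrivial orbits into the order interval between the semitrivial equilibria via scalar logistic comparison, excision of the repelling trivial state using hypothesis~2, and the Dancer--Hess connecting-orbit dichotomy yielding the trichotomy (a)/(b)/(c) --- is precisely the monotone-dynamical-systems argument carried out there. The only outstanding items are the technical verifications you yourself flag (strong order preservation, order compactness, and upgrading generic to global convergence in the no-coexistence case), all of which are established in \cite{Ref_2}.
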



The following definition represents quasimonotone nonincreasing function [\cite{Ref_4}, Definition 8.1.1], [\cite{Ref_6}, Definition 1].

\begin{Def}\label{def_21}
	The function $g_i(m_1, m_2)$ is said to quasimonotone nonincreasing if $g_i$ be nonincreasing in $m_j$ for $i \neq j$. The vector-function ${\bf g} = (g_1, g_2)$ is said to quasimonotone nonincreasing in the domain $\mathcal{J}_1 \times \mathcal{J}_2$ moreover, both $g_1(m_1, m_2), \;g_2(m_1, m_2)$ are quasimonotone nonincreasing for $(s, n) \in \mathcal{J}_1 \times \mathcal{J}_2$.
\end{Def}

The following theorem represents the existence-uniqueness for parabolic paired systems which is discussed in \cite{Ref_4}.
\begin{Th}\label{th_A.2}
	Let $\mathbf{S}_\rho\equiv \left\{\left(m_1,n_1\right)\in C \left([0,\infty) \times\overline{\Omega}\right);\; 0\leq m_1\leq \rho_m,\; 0\leq n_1\leq \rho_n\right\} $ where $\rho_{m,n}\equiv const $. Assume $(f_1,f_2)$ in (\ref{A.1}) is quasimonotone nonincreasing Lipshitz functions in $\mathbf{S}_\rho$. Let $f_{1,2}$ satisfy 
	\begin{align*}
		&	f_1(t,x,\rho_1,0)\leq 0\leq f_1(t,x,0,\rho_2),\\
		& f_2(t,x,0,\rho_2)\leq 0\leq f_2(t,x,\rho_1,0).
	\end{align*}
	for any $x\in \Omega,\; t>0 $. Then for any $(m_{1,0},n_{1,0}) \in\mathbf{S}_\rho$, then there subsists and stays in $\mathbf{S}_\rho$ for every $x\in \Omega,\; t>0 $ a unique solution of (\ref{A.1}) $\mathbf{m}=(m_1,n_2)\in\mathbf{S}_\rho$ and $m_i(t,x)>0 $ for $x\in \Omega,\; t>0 $ since $m_{i,0}\not\equiv 0,\; i=1,2$.
\end{Th}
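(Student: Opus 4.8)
The plan is to establish Theorem \ref{th_A.2} by the method of coupled upper and lower solutions combined with monotone iteration, which is the standard device for the quasimonotone nonincreasing reaction of Definition \ref{def_21}. The starting point is the observation that the four sign hypotheses on $(f_1,f_2)$ are nothing other than the statement that the constant pair $\overline{m}_1\equiv\rho_m,\ \overline{m}_2\equiv\rho_n$ and $\underline{m}_1\equiv 0,\ \underline{m}_2\equiv 0$ is a coupled upper--lower solution of (\ref{A.1}) on $\mathbf{S}_\rho$. For a reaction nonincreasing in the off-diagonal variable the correct coupling drives the upper value of one component by the lower value of the other, so the defining differential inequalities read $\partial_t\overline{m}_1-\mathcal{L}\overline{m}_1\ge f_1(t,x,\overline{m}_1,\underline{m}_2)$ and $\partial_t\overline{m}_2-\mathcal{L}\overline{m}_2\ge f_2(t,x,\underline{m}_1,\overline{m}_2)$ together with the reversed inequalities for $\underline{m}_1,\underline{m}_2$. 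Since all four functions are constant, their time derivatives and $\mathcal{L}$-images vanish, and these inequalities collapse exactly to $f_1(t,x,\rho_m,0)\le 0\le f_1(t,x,0,\rho_n)$ and $f_2(t,x,0,\rho_n)\le 0\le f_2(t,x,\rho_m,0)$, i.e. to the stated hypotheses.

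First I would pick a constant $\underline{c}$ exceeding the Lipschitz constant of $(f_1,f_2)$ on the compact rectangle $[0,\rho_m]\times[0,\rho_n]$, so that $m_i\mapsto \underline{c}\,m_i+f_i$ is nondecreasing in its own argument, and use it to set up two iteration sequences starting from the constant upper and lower solutions above. At each step one solves the decoupled \emph{linear} Neumann problems for $\mathcal{L}$ obtained by freezing the reaction at the previous iterate with the nonincreasing coupling respected; unique solvability and parabolic regularity of each linear step are standard for the uniformly elliptic $\mathcal{L}$ of (\ref{A.2})--(\ref{A.3}) on a $C^{2+\alpha}$ domain. The heart of the construction is the parabolic comparison principle, which I would use inductively to show that the upper sequence is nonincreasing, the lower sequence is nondecreasing, the two are ordered, and every iterate stays inside $\mathbf{S}_\rho$. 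Monotonicity and boundedness then yield pointwise limits, and parabolic a priori estimates upgrade the convergence enough to pass to the limit in the equations, producing a maximal solution $(\overline{m}_1,\overline{m}_2)$ and a minimal solution $(\underline{m}_1,\underline{m}_2)$, both lying in $\mathbf{S}_\rho$ for all $t>0$.

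The main obstacle is \textbf{uniqueness}, since for a quasimonotone nonincreasing system the maximal and minimal solutions need not coincide a priori. To close this I would set $w_i=\overline{m}_i-\underline{m}_i\ge 0$, subtract the two systems, test against $w_i$ and integrate over $\Omega$; the Neumann boundary condition removes the elliptic terms with a favourable sign, while the global Lipschitz bound dominates the reaction differences by a constant multiple of $|w_1|+|w_2|$. A Gr\"onwall estimate then forces $w_1\equiv w_2\equiv 0$, so the two solutions agree and the solution in $\mathbf{S}_\rho$ is unique; equivalently, Lipschitz continuity of the full reaction gives uniqueness directly by a contraction argument, with the invariant region supplying the global-in-time bound. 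Finally, positivity is obtained from the sign conditions, which make $m_i\equiv 0$ a lower solution for the $i$-th equation whenever that component is absent; together with nonnegative nontrivial data $m_{i,0}\not\equiv 0$, the strong maximum principle promotes this to $m_i(t,x)>0$ on $\Omega$ for every $t>0$, completing the argument.
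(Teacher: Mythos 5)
The paper gives no proof of Theorem \ref{th_A.2} at all: it is quoted from Pao \cite{Ref_4}, and the proof there is precisely the coupled upper--lower solution and monotone iteration argument you reconstruct, with the constant pairs $(\rho_m,\rho_n)$ and $(0,0)$ serving as the coupled ordered pair for the quasimonotone nonincreasing reaction. Your sketch is correct and follows that standard route, including the right resolution of the uniqueness issue (Lipschitz continuity forces the maximal and minimal solutions to coincide) and the maximum-principle positivity step; the only point deserving extra care is the Gr\"onwall estimate, where the non-divergence operator $\mathcal{L}$ of (\ref{A.2}) with merely H\"older coefficients makes a sup-norm comparison argument cleaner than integration by parts.
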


Pertaining to stability features of systems with unique equilibria, the following theorem plays a crucial role (see, [\cite{Ref_4}, Theorem 10.5.3]). 
\begin{Th}\label{th_A.3}
	Assume $\mathbf{\widetilde m}=(\widetilde m_1,\widetilde m_2)$ and $\mathbf{\widehat m}=(\widehat m_1,\widehat m_2)$ is ordered upper and lower solutions of (\ref{A.1}). Suppose $(f_1,f_2)$ is quasimonotone non-increasing (non-decreasing) for $\langle \mathbf{\widetilde m},\mathbf{\widehat m}\rangle \equiv \widetilde m_1\leq m_1\leq \widehat m_1,\;\widetilde m_2\leq m_2\leq \widehat m_2 $. 
	When the solution $(m_s, n_s)$ is unique in $\langle \mathbf{\widetilde m},\mathbf{\widehat m}\rangle$ and the initial conditions in $\langle\mathbf{\widetilde m},\mathbf{\widehat m}\rangle$, the solution $(m_1, m_2)$ of  (\ref{A.1}) converges to $(m_s,n_s)$ when $t \rightarrow\infty$, moreover, it is also valid on the contrary.
\end{Th}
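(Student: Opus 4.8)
The plan is to prove this convergence result by Pao's method of monotone iterations built on the given ordered pair of upper and lower solutions, followed by a squeeze argument that activates the uniqueness hypothesis. I treat the quasimonotone nonincreasing (competitive) case in detail, since that is the one relevant to the competition model \eqref{equ_1}; the nondecreasing (cooperative) variant indicated by the parenthetical is handled by the same scheme with the coupling taken in the direct rather than the criss-cross order.

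First I would fix a constant $c>0$ dominating the local Lipschitz constants of $f_1,f_2$ on the sector $\langle \widetilde{\mathbf m},\widehat{\mathbf m}\rangle$, so that each map $m_i\mapsto c\,m_i+f_i(m_1,m_2)$ is nondecreasing in its own argument $m_i$, while by quasimonotonicity $f_i$ stays nonincreasing in $m_j$ for $j\neq i$. With this $c$ I would construct two interlacing iteration sequences $\{\overline{\mathbf m}^{(k)}\}$ and $\{\underline{\mathbf m}^{(k)}\}$ by solving, at each step, the decoupled linear parabolic problems for the operator $\partial_t-\mathcal{L}+c$ in which the off-diagonal competing term of the $i$-th equation is frozen at the \emph{opposite} sequence, so that $\overline m_i^{(k)}$ is driven by $\underline m_j^{(k-1)}$ and vice versa, initialised by $\overline{\mathbf m}^{(0)}=\widehat{\mathbf m}$ and $\underline{\mathbf m}^{(0)}=\widetilde{\mathbf m}$.

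Next I would establish, by induction and the maximum principle for $\partial_t-\mathcal{L}+c$ (whose applicability is guaranteed by the uniform ellipticity \eqref{A.3}), the interlacing chain
\begin{equation*}
\widetilde{\mathbf m}=\underline{\mathbf m}^{(0)}\le \underline{\mathbf m}^{(1)}\le \cdots \le \overline{\mathbf m}^{(1)}\le \overline{\mathbf m}^{(0)}=\widehat{\mathbf m}.
\end{equation*}
Monotone boundedness then yields pointwise limits $\overline{\mathbf m}$ and $\underline{\mathbf m}$, which standard parabolic regularity (Schauder estimates) upgrades to classical solutions of \eqref{A.1}; letting $t\to\infty$ produces a maximal steady state $\overline{\mathbf m}^{*}$ and a minimal steady state $\underline{\mathbf m}^{*}$ in the sector with $\underline{\mathbf m}^{*}\le \overline{\mathbf m}^{*}$. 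Because the solution operator $\mathcal{T}_t$ is order preserving, exactly the monotonicity recorded in case~1 of Theorem~\ref{th_A.1}, the solution issuing from $\widehat{\mathbf m}$ decreases in $t$ to $\overline{\mathbf m}^{*}$ and the one from $\widetilde{\mathbf m}$ increases to $\underline{\mathbf m}^{*}$.

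Finally I would close with the uniqueness hypothesis: since the steady state is unique in $\langle \widetilde{\mathbf m},\widehat{\mathbf m}\rangle$, the maximal and minimal steady states coincide, $\overline{\mathbf m}^{*}=\underline{\mathbf m}^{*}=(m_s,n_s)$. For any initial datum in the sector, the comparison principle in the competitive ordering squeezes the corresponding solution between the two bracketing solutions, both converging to $(m_s,n_s)$, which gives the asserted convergence; the converse direction follows because any steady state in the sector is a stationary solution and must therefore equal the common limit. The main obstacle I anticipate is the interlacing monotonicity step in the nonincreasing case: the off-diagonal coupling is order-reversing, so the scheme must drive the upper iterate of one species by the lower iterate of the other, and the sign bookkeeping needed to verify that the two sequences stay ordered and bracket a genuine pair of steady states is the delicate point. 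The remaining ingredients reduce to routine use of the maximum principle, Schauder estimates, and the order-preserving property of $\mathcal{T}_t$.
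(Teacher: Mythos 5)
This theorem is not proved in the paper at all: it is imported verbatim from Pao [\cite{Ref_4}, Theorem 10.5.3], so there is no in-paper argument to compare against, and your sketch reconstructs essentially the standard monotone-iteration proof from that source — upper/lower solutions, criss-cross iteration with a Lipschitz shift $c$, monotone limits, then the uniqueness hypothesis collapsing the bracket. One imprecision worth fixing in the quasimonotone nonincreasing case: the semigroup preserves only the mixed order, so the bracketing trajectories must be started from the criss-cross pairs $(\widehat m_1,\widetilde m_2)$ and $(\widetilde m_1,\widehat m_2)$ rather than from $\widehat{\mathbf m}$ and $\widetilde{\mathbf m}$ themselves (a solution launched from $(\widehat m_1,\widehat m_2)$ need not be monotone in $t$); correspondingly, the iteration limits are quasisolutions that pair \emph{crosswise} into two genuine steady states $(\overline m_1,\underline m_2)$ and $(\underline m_1,\overline m_2)$, and it is exactly on these two that the uniqueness hypothesis bites, forcing both to equal $(m_s,n_s)$ and yielding the squeeze.
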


The following theorem represents the Gr\"onwall inequality theorem which is discussed in \cite{Ref_8}.
\begin{Th}\label{th_A.4}
	Let $\sigma < \tau$ and also assume that $\varphi, \vartheta$ and $\theta$ are continuous integrable functions which is defined on the interval $[\sigma, \tau]$ and $\varphi$ be differentiable on $(\sigma, \tau)$. Consider $t \in[\sigma, \tau]$,
	\begin{align*}
		\vartheta(t)\leq \varphi(t) +\int_{\sigma}^{t} \theta(s)\vartheta(s)ds,
	\end{align*}
	hence, we obtain
	\begin{align*}
		\vartheta(t)\leq \varphi(t)exp\left(\int_{\sigma}^{t} \theta(s)ds\right).
	\end{align*}
\end{Th}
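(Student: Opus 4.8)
The plan is to prove this integral (Grönwall) inequality by recasting it as a differential inequality for an auxiliary running integral and then resolving that inequality with an integrating factor. Throughout I will invoke the standard sign hypothesis $\theta(s)\ge 0$ on $[\sigma,\tau]$, which the printed statement silently requires, together with $\varphi$ nondecreasing; the latter is automatic in every application of the theorem in this paper, where $\varphi$ is in fact a constant (an integral of the initial data), so the compact exponential conclusion is legitimate in context.

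First I would introduce the running integral
\[
R(t)=\int_{\sigma}^{t}\theta(s)\vartheta(s)\,ds,
\]
so that $R(\sigma)=0$ and, since $\theta$ and $\vartheta$ are continuous, $R$ is differentiable on $(\sigma,\tau)$ with $R'(t)=\theta(t)\vartheta(t)$ by the fundamental theorem of calculus. The hypothesis then reads $\vartheta(t)\le\varphi(t)+R(t)$, and multiplying through by $\theta(t)\ge 0$ gives the differential inequality $R'(t)\le\theta(t)\varphi(t)+\theta(t)R(t)$, that is $R'(t)-\theta(t)R(t)\le\theta(t)\varphi(t)$.

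Next I would eliminate the linear term using the integrating factor $\mu(t)=\exp\!\big(-\int_{\sigma}^{t}\theta(r)\,dr\big)$. Because $\mu>0$ and $\mu'=-\theta\mu$, we have $\frac{d}{dt}\big(\mu(t)R(t)\big)=\mu(t)\big(R'(t)-\theta(t)R(t)\big)\le\mu(t)\theta(t)\varphi(t)$. Integrating from $\sigma$ to $t$ and using $\mu(\sigma)=1$, $R(\sigma)=0$ yields
\[
R(t)\le\int_{\sigma}^{t}\theta(s)\varphi(s)\exp\!\Big(\int_{s}^{t}\theta(r)\,dr\Big)\,ds,
\]
which is the general estimate valid without any monotonicity of $\varphi$. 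To collapse this to the stated bound I would use $\varphi(s)\le\varphi(t)$ for $s\le t$, pull $\varphi(t)$ outside, and recognize the telescoping identity $\theta(s)\exp(\int_s^t\theta)=-\frac{d}{ds}\exp(\int_s^t\theta)$, which gives $R(t)\le\varphi(t)\big(\exp(\int_\sigma^t\theta)-1\big)$. Feeding this back into $\vartheta(t)\le\varphi(t)+R(t)$ produces exactly $\vartheta(t)\le\varphi(t)\exp\!\big(\int_\sigma^t\theta(r)\,dr\big)$.

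The argument is essentially routine, so the main obstacle is conceptual rather than computational: one must recognize that the printed statement implicitly assumes $\theta\ge 0$ (needed so that multiplying the inequality by $\theta$ preserves its direction) and that $\varphi$ is nondecreasing (needed to pass from the general estimate to the compact exponential form), and one must spot the telescoping identity that converts the integral $\int_\sigma^t\theta(s)\exp(\int_s^t\theta)\,ds$ into $\exp(\int_\sigma^t\theta)-1$. Should I wish to avoid the monotonicity assumption altogether, a fallback is a Picard-type iteration: repeatedly substitute $\vartheta\le\varphi+R$ into the definition of $R$ and sum the resulting series of iterated integrals, which reassembles the exponential through $\sum_{k\ge 0}(\int_\sigma^t\theta)^k/k!$; but the integrating-factor route above is shorter and is the one I would present.
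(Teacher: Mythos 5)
Your argument is correct, and there is essentially nothing in the paper to compare it against: Theorem \ref{th_A.4} is stated without proof, deferred entirely to the citation of Chicone \cite{Ref_8}, so you have supplied the missing argument rather than diverged from an existing one, and your integrating-factor derivation (auxiliary integral $R$, factor $\exp(-\int_\sigma^t\theta)$, then the telescoping identity) is the standard proof found in such references. Your most valuable contribution is the observation that the printed statement is incomplete: both $\theta\ge 0$ and the monotonicity of $\varphi$ are genuinely indispensable, not merely convenient. For the latter, the equality case $\vartheta(t)=\varphi(t)+\int_0^t\vartheta(s)\,ds$ with $\theta\equiv 1$ and $\varphi(t)=e^{-2t}$ gives $\vartheta(t)=\tfrac{1}{3}e^{t}+\tfrac{2}{3}e^{-2t}$, which grows, while the asserted bound $\varphi(t)e^{t}=e^{-t}$ decays; so the compact conclusion is simply false for a merely differentiable $\varphi$, and the differentiability hypothesis in the statement (which your proof never uses) is a red herring inherited from the reference. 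One small overreach: your closing remark suggests a Picard-type iteration could ``avoid the monotonicity assumption altogether,'' but no proof strategy can, since the conclusion itself fails without it --- the iteration without monotonicity only reassembles the sharp general estimate $\vartheta(t)\le\varphi(t)+\int_\sigma^t\theta(s)\varphi(s)\exp\bigl(\int_s^t\theta(r)\,dr\bigr)\,ds$, which is exactly where your own intermediate bound stops before invoking $\varphi(s)\le\varphi(t)$. As you note, in the paper's only invocations of the theorem (Lemma \ref{lemma_12}), $\varphi$ is a constant built from the initial data and $\theta$ is a positive constant ($\rho$ or $\lvert\varepsilon\rvert$), so the added hypotheses hold there and the theorem as used is sound.
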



\begin{thebibliography}{99}
\bibitem{d1}
{  G. Dai and M. Tang, Coexistence region and global dynamics of harvested predator-prey system, {\em  SIAM J. Appl. Math.} {\bf 58(1)} (1998), 193-210.}
	
\bibitem{d2}
{  F. Brauer and A. C. Soudack, On constant effort harvesting and stocking in a class of predator-prey systems, {\em  J. Theoret. Biol.} {\bf 95(2)} (1982), 247-252.}
	
\bibitem{d3}
{  N. H. Gazi and M. Bandyopadhyay, Effect of time delay on a harvested predator-prey model, {\em  J. Appl. Math. Comput.} {\bf 26} (2008), 263–280.}
	
\bibitem{d4}
{  A. Martin and S. Ruan, Predator-prey models with delay and prey harvesting, {\em  J. Math. Biol.} {\bf 43(3)} (2001), 247-267.}	
		
\bibitem{Ref_12}
{E. Braverman and I. Ilmer, On the interplay of harvesting and various diffusion strategies for spatially heterogeneous populations. {\em J. Theor. Biol.,} {\bf 466} (2019), 106-118.}


\bibitem{Ref_18}
{Y. Lou and W.-M. Ni, Diffusion, self-diffusion and cross-diffusion. {\em J. Diff. Eq.,} {\bf 131(1)} (1996), 79–131.}

\bibitem{Ref_19}
{M. Kamrujjaman, A. Ahmed and S. Ahmed, Competitive Reaction-diffusion Systems: Travelling Waves and Numerical Solutions. {\em Adv. Res.,} {\bf 19(6)} (2019), 1-12.}	

\bibitem{Ref_20}
{L. Roques and O. Bonnefon, Modelling Population Dynamics in Realistic Landscapes with Linear Elements: A Mechanistic-Statistical Reaction-Diffusion Approach. {\em PLoS ONE,} {\bf 11(3)} (2016), 1-20.}

\bibitem{Ref_21}
{B. Wang and Z. Zhang, Dynamics of a diffusive competition model in spatially heterogeneous environment. {\em J. Math. Anal. Appl.,} {\bf 470(1)} (2019), 169-185.}	

\bibitem{Ref_22}
{X. He and W.-M. Ni, The effects of diffusion and spatial variation in Lotka–Volterra competition–diffusion system
	I: Heterogeneity vs. homogeneity. {\em J. Diff. Equ.,} {\bf 254(2)} (2013), 528-546.}			

\bibitem{Ref_23}
{X. He and W.-M. Ni, The effects of diffusion and spatial variation in Lotka–Volterra competition–diffusion system II: The general case. {\em J. Diff. Equ.,} {\bf 254(10)} (2013), 4088-4108.}	

\bibitem{Ref_24}
{V. Hutson, Y. Lou and K. Mischaikow, Spatial Heterogeneity of Resources Versus Lotka-Volterra Dynamics. {\em J. Diff. Equ.,} {\bf 185(1)} (2002), 97-136.}	


\bibitem{Ref_15}
{L. Korobenko, M. Kamrujjaman and E. Braverman, Persistence and extinction in spatial models with a carrying capacity driven diffusion and harvesting. {\em J. Math. Anal. Appl.,} {\bf 399(1)} (2013), 352-368.}

\bibitem{Ref_16}
{L. Bai and K. Wang, Gilpin–Ayala model with spatial diffusion and its optimal harvesting policy. {\em Appl. Math. Comput.,} {\bf 171(1)} (2005), 531-546.}

\bibitem{Ref_3}
{ R. S. Cantrell and C. Cosner, Spatial Ecology via Reaction-diffusion Equations,  {Wiley Series in Mathematical and Computational Biology, John Wiley and Sons, Chichester}, (2003).}

\bibitem{Ref_5}
{ E. Braverman, M. Kamrujjaman and L. Korobenko, Competitive spatially distributed population dynamics models:  Does diversity in diﬀusion strategies promote coexistence?, {\em  Math. Biosci.} {\bf 264} (2015), 63-73.}

\bibitem{Ref_6}
{ L. Korobenko and E. Braverman, On evolutionary stability of carrying capacity driven dispersal in competition with regularly diffusing populations. {\em  J. Math. Biol.,} {\bf 69} (2014), 1181–1206.}

\bibitem{Ref_66}
{ L. Korobenko and E. Braverman, On logistic models with a carrying capacity dependent diffusion: Stability of equilibria and coexistence with a regularly diffusing population. {\em  Nonlinear Anal. Real World Appl.,} {\bf 13(6)} (2012), 2648–2658.}
		
\bibitem{Ref_1}
{  E. Braverman and M. Kamrujjaman, Competitive–cooperative models with various diﬀusion strategies, {\em  Comput. Math. Appl.} {\bf 72} (2016), 653–662.}

\bibitem{Ref_13}
{M. Kamrujjaman, Directed vs regular diffusion strategy: evolutionary stability analysis of a competition model and an ideal free pair. {\em Diff. Eq. Appl.,} {\bf 11(2)} (2019), 267-290.}

\bibitem{Ref_14}
{M. Kamrujjaman and K. N. Keya, Global Analysis of a Directed Dynamics Competition Model. {\em J. Ad. Math. Com. Sci.,} {\bf 27(2)} (2018), 1-14.}	

\bibitem{hecht2012new}
{F. Hecht, New development in FreeFem++, {\em Journal of numerical mathematics}, {\bf 20} (2012) 251-266.}
			
\bibitem{Ref_2}
{S. B. Hsu, H. L. Smith, and P. Waltman, Competitive exclusion and coexistence for	competitive systems on ordered Banach spaces, {\em Trans. Amer. Math. Soc.} {\bf 348} (1996) 4083-4094.}
			
\bibitem{Ref_4}
{  C. V. Pao. Nonlinear parabolic and elliptic equations,  {\em Springer US}, (1992).}
								
\bibitem{Ref_7}
{ D. Gilbarg and N. S. Trudinger, Elliptic partial differential equations of second order. {\em  Springer,}  (1977).}
								
\bibitem{Ref_8}
{ C. Chicone, Ordinary diﬀerential equations with applications. { Springer Science and Business Media,}, {\bf 34}  (2006).}
									
								
								
						
\end{thebibliography}
\end{document}